\definecolor{ForestGreen}{rgb}{0.1,0.6,0.05}
\definecolor{EgyptBlue}{rgb}{0.063,0.1,0.6}
\newtheorem{theorem}{Theorem}
\newtheorem{proposition}[theorem]{Proposition}
\newtheorem{lemma}[theorem]{Lemma}
\newtheorem{corollary}[theorem]{Corollary}
\theoremstyle{definition}
\newtheorem{definition}[theorem]{Definition}
 \newtheorem{rmk}[theorem]{Remark}
\newtheorem{example}[theorem]{Example}
\let\OLDthebibliography\thebibliography
\renewcommand\thebibliography[1]{
	\OLDthebibliography{#1}
	\setlength{\parskip}{1pt}
	\setlength{\itemsep}{1pt plus 0.3ex}
}
\numberwithin{equation}{section}
\numberwithin{theorem}{section}
\numberwithin{equation}{section}
\numberwithin{theorem}{section}
\DeclarePairedDelimiter\abs{\lvert}{\rvert}%
\DeclarePairedDelimiter\norm{\lVert}{\rVert}%
\let\oldnorm\norm
\def\norm{\@ifstar{\oldnorm}{\oldnorm*}}
\newcommand{\al} {\alpha}
\newcommand{\De} {\Delta}
\newcommand{\Ga} {\Gamma}
\newcommand{\om} {\omega}
\newcommand{\Om} {\Omega}
\newcommand{\la} {\lambda}
\newcommand{\Gr} {\nabla}
\newcommand{\noi} {\noindent}
\newcommand{\ra} {\rightarrow}
\newcommand{\wra} {\rightharpoonup}
\newcommand{\var} {\varepsilon}
 \def\Dp{{{\mathcal D}_p(\Om)}}
 \def\Dph{{{\mathcal D}_p(\Om)^H}}
\newcommand{\wrastar} {\overset{\ast}{\rightharpoonup}}
\newcommand\restr[2]{{
  \left.\kern-\nulldelimiterspace 
  #1 
  \right|_{#2} 
  }}
\def\A{{\mathcal A}}
\def\C{{\mathcal C}}
\def\D{{\mathcal D}}
\def\E{{\mathcal E}}
\def\N{{\mathbb N}}
\def\F{{\mathcal F}}
\def\cset{{\subset \subset }}
\def\R{{\mathbb R}}
\def\({{\Big(}}
\def\){{\Big)}}
\def\ws2{{\F_{\frac{N}{2}}}}
\def\c1{{\C_c^1}}
\def\d{{\rm d}}
\def\dr{{\rm d}r}
\def\dz{{\rm d}z}
\def\dx{{\rm d}x}
\def\H{{\mathcal{H}}}
\date{}
\begin{document}
\title{On the generalised Br\'ezis-Nirenberg problem}
\author{T. V. Anoop$^{1}$}
\address{$^{1}$ Department of Mathematics, Indian Institute of Technology, Madras, India.}
\email{anoop@\text{iitm}.ac.in}

\author{Ujjal Das$^{2^*}$}
\address{$^2$ corresponding author, Department of Mathematics, Technion - Israel Institute of
		Technology,   Haifa, Israel.}
\email{ujjaldas@campus.technion.ac.il, ujjal.rupam.das@gmail.com}

\maketitle

 \begin{abstract}
For $ p \in (1,N)$ and a domain $\Omega$ in $\mathbb{R}^N$, we study the following quasi-linear problem involving the critical growth:
\begin{eqnarray*} 
 -\Delta_p u - \mu g|u|^{p-2}u  =  |u|^{p^{*}-2}u \  \mbox{ in } \mathcal{D}_p(\Omega), 
\end{eqnarray*}
where $\Delta_p$ is the $p$-Laplace operator defined as $\Delta_p(u) = \text{div}(\abs{\nabla u}^{p-2} \Gr u),$ $p^{*}= \frac{Np}{N-p}$ is the critical Sobolev exponent and $\mathcal{D}_p(\Omega)$ is the Beppo-Levi space defined as the  completion of 
$\text{C}_c^{\infty}(\Omega)$ with respect to the norm $\|u\|_{\mathcal{D}_p} := \left[  \displaystyle \int_{\Omega} |\nabla u|^p \dx \right]^ \frac{1}{p}.$ In this article, we provide various sufficient conditions on $g$ and $\Omega$ so that the above problem admits a positive solution for certain range of $\mu$. As a consequence, for $N \geq p^2$, if $g $ is such that $g^+ \neq 0$ and the map $u \mapsto \displaystyle \int_{\Omega} |g||u|^p \dx$ is compact on $\mathcal{D}_p(\Omega)$, we show that the problem under consideration has a positive solution for certain range of $\mu$. Further, for $\Omega =\mathbb{R}^N$, we give a necessary condition for the existence of positive solution.
 \end{abstract} 

\medskip
\noindent
{\bf Mathematics Subject Classification (2020):} 35B33, 35J60, 58E30  \\
\noindent
{\bf Keywords:} Br\'ezis-Nirenberg type problem; Critical Sobolev exponent;  Concentration compactness; Principle of symmetric criticality; Pohozave type identity.
\maketitle
\section{Introduction}

For $p \in (1,N)$ and a domain $\Om$ in $\R^N$, the Beppo-Levi space $\mathcal{D}_p(\Om)$ is defined as the  completion of 
$\text{C}_c^{\infty}(\Om)$ with respect to the norm $\norm{u}_{\D_p} := \left[  \displaystyle \int_{\Omega} |\nabla u|^p \dx \right]^ \frac{1}{p}.$ 
In this article, we study the following quasi-linear partial differential equation involving the critical growth:
\begin{eqnarray} \label{Critical}
 -\De_p u - \mu g|u|^{p-2}u & = & |u|^{p^{*}-2}u \  \mbox{ in } \Dp, 
\end{eqnarray}
where $\Delta_p$ is the $p$-Laplace operator defined as $\Delta_p u = \text{div}(\abs{\Gr u}^{p-2} \Gr u),$ and $p^{*}= \frac{Np}{N-p}$ is the critical Sobolev exponent. Since the seminal work of Br\'ezis-Nirenberg \cite{Nirenberg}, many different classes of elliptic boundary value 
problems involving the critical exponent have been explored. 
For example, see \cite{Antonio,Miyagaki,Capozzi,Monica,Schechter,Tobias} for Laplacian ($p=2$) and \cite{AlonsoEstimates,Mercuri,Guedda,Alonso,Noussair,Chabrowski3,Cao} for 
p-Laplacian ($p \in (1,N)$). 
It is well known that, if $g \equiv 0$ in \eqref{Critical} i.e. the problem
\begin{eqnarray} \label{Critical0}
 -\De_p u & = & |u|^{p^{*}-2}u \  \mbox{ in } \Dp, 
\end{eqnarray}
does not admit any positive solution on a bounded, star-shaped domain \cite[Remark 1.2]{Nirenberg}. 
However, if $g \equiv 1$ on a bounded domain $\Om$, then \eqref{Critical} admits a positive solution for certain range (depending on $p$ and $N$) of $ \mu $, see \cite[for Laplacian]{Nirenberg} and \cite[for p-Laplacian]{AlonsoEstimates}. This suggests that certain perturbations  of \eqref{Critical0} of the form \eqref{Critical}  may admit a positive solution.
Also, in \cite{Shoyeb}, authors multiplied a weight function  $g$ to the right-hand side of \eqref{Critical0} and studied the existence of positive solutions.
In this article, we are interested in identifying a general class of $g$ so that \eqref{Critical} admits a positive solution. 
Many results have been appeared in this direction.
For example, for bounded domain $\Om$, $g$ is positive constant \cite{Nirenberg,AlonsoEstimates}, $g$ is bounded non-negative function \cite{Guedda, Egneel}, sign-changing $g \in \text{C}(\overline{\Om})$ \cite{Hsu}.
For $\Om=\R^N$,  Charles Swanson and Lao Sen \cite{Swanson}  considered nontrivial, non-negative $g$ in $ L^{\frac{N}{p}}(\R^N)$. For $p=2$, Chabrowski \cite{Chabrowski} considered sign-changing $g$ which is positive on a positive measure set and $g \in L^1(\R^N) \cap \text{C}(\R^N)$  with $  \lim_{|x|\ra \infty} \int_{B_l(x)}|g|^{\frac{r}{r-2}}\dx= 0$ for some $r\in (2,2^*),l>0$, and for
$p \in (1,N)$, Huang \cite{Huang} has taken  $g \in L^{\frac{p^*}{p^*-r}}(\R^N)$ for some $r \in (1,p^*)$. 
In \cite{Pavel}, Dr\'abek-Huang considered $g$ such that $g^+ \in L^{\infty}(\R^N) \cap L^{\frac{N}{p}}(\R^N)$ and $g^- \in L^{\infty}(\R^N)$.
In all the above mentioned results, the assumptions on $g$ ensure that the map $$G_p(u):=\displaystyle \int_{\Om} |g||u|^p \dx \, \, \text{on} \ \Dp$$
is compact (i.e. $G_p(u_n) \ra G_p(u)$ whenever $u_n \wra u$ in  $\Dp$). 
In this case, the non-compactness issues in dealing with \eqref{Critical} arise only  due to the presence of critical exponent or the unboundedness of the domain. Indeed, to tackle these kinds of non-compactness, one can use the celebrated concentration compactness principles of P. L. Lions \cite{Lions1a, Lions2a}. Moreover, in \cite{Cencelj}, authors considered multiple perturbations of \eqref{Critical0}, where the non-compactness of one perturbation is compensated by another perturbation.
The main objective of this article is to consider a single non-compact perturbation of \eqref{Critical0} by a potential $g$ and enlarge the class of $g$ that ensures the existence of a positive solution of \eqref{Critical}. We use a  concentration compactness principle that depends on $g$ (and in the later part, depends on a closed subgroup of $\mathcal{O}(N)$) that can simultaneously address all the aforementioned non-compactness (critical exponent, unboundedness of the domain, non-compact perturbations). 

Notice that, if $u \in \Dp$ is a solution to \eqref{Critical}, then
$$\int_{\Om} |\nabla u|^p \dx = \mu \int_{\Om} g |u|^p \dx + \int_{\Om} |u|^{p^*} \dx \geq  \mu \int_{\Om} g |u|^p \dx.$$
We priory assume that $g$ must satisfy the following Hardy type inequality: 
$$\int_{\Om} |g||u|^p \dx \leq C \int_{\Om} |\nabla u|^p \dx, \ \forall \,u \in \Dp,$$
for some $C>0.$ 
\begin{definition}
A function $g\in L^1_{loc}(\Om)$ satisfying the above inequality is called as a Hardy potential and  the space of all Hardy potentials is denoted by $\mathcal{H}_p(\Om).$
\end{definition} \noi One can define a norm on $\mathcal{H}_p(\Om)$ which makes it a Banach function space (see Section \ref{spaces}).  
For $g \in \mathcal{H}_p(\Om)$, we define
$$\mu_1(g,\Om)=\inf \left \{\int_{\Om} |\nabla u|^p \dx : \int_{\Om}|g||u|^p \dx=1, u\in \Dp\right \} \,.$$
If the underlying domain is unambiguous, then we simply write  $\mu_1(g)$ instead of $\mu_1(g,\Om).$ Observe that $\mu_1(g)>0$, and for each $ \mu \in (0,\mu_1(g))$,  $\|u\|_{\D_p,\mu}:= \displaystyle \left[\int_{\Om}[|\nabla u|^p - \mu g |u|^p] \dx \right]^{\frac{1}{p}}$ is a quasi-norm on $\D_p(\Om)$ and it is equivalent to $\|u\|_{\D_p}$. We also define
$$\mu_1(g,x)=\lim_{r \ra 0} \left[\inf \left \{\int_{\Om} |\nabla u|^p \dx : \int_{\Om}|g||u|^p \dx=1 : u\in \D_p(\Om \cap B_r(x))\right \} \right] \,; \, x \in \overline{\Om} \,,   $$
 and $$\Sigma_g=\{x \in \overline{\Om}: \mu_1(g,x)<\infty\} \,. $$

Now, for $g \in \mathcal{H}_p(\Om)$ and $\mu \in (0,\mu_1(g))$, we consider the functional 
$$J_{g,\mu}(u)= \displaystyle \int_{\Om} [|\nabla u|^p - \mu g |u|^p ] \dx $$
and define \begin{equation} \label{min}
 \E_{g,\mu}(\Om) = \inf \left \{ J_{g,\mu}(u) : u \in \mathbb{S}_p(\Om)  \right \} \,,
\end{equation}
where $\mathbb{S}_p(\Om):=\{u \in \Dp : \norm{u}_{p^*}=1\}.$
If $ \E_{g,\mu}(\Om)$ is attained at $v \in \mathbb{S}_p(\Om)$, then  the standard variational arguments ensure that $[J_{g,\mu}(v)]^{\frac{1}{p^*-p}}v$
is a non-trivial solution of \eqref{Critical}.
In order to investigate whether  $\E_{g,\mu}(\Om)$ is attained or not, consider a  minimizing sequence of $\E_{g,\mu}(\Om)$, say $(w_n)$ in $\Dp$.
It is not difficult to see that $(w_n)$ converges weakly to some $w\in \Dp$. By  zero extension, $w_n \in \mathcal{D}_p(\Om)$ can be considered as a $\mathcal{D}_p(\R^N)$ function whenever convenient. Using this convention, we define the measures $\nu_n$ as
\begin{eqnarray} \label{nuinfinity}
\nu_n(E) = \int_E |w_n-w|^{p^*} \dx, \text{ for every Borel set $E$ in } \R^N \nonumber \,,
 \end{eqnarray}
 and  the following quantity
 $$ \nu_{\infty} = \lim_{R \ra \infty} \overline{\lim_{n \ra \infty}} \int_{B_R^c} |w_n - w|^{p^*} \dx.$$
Since $(w_n)$  is bounded in $\Dp$, it follows that $(\nu_n)$ is bounded in the space of all regular, finite,  Borel signed-measures $\mathcal{M}(\R^N)$ with respect to the norm $\|\nu_n\|:=\nu_n(\R^N)$ (total variation of measures). By the Reisz representation theorem \cite[Theorem 14.14, Chapter 14]{Border}, $\mathcal{M}(\R^N)$ is the dual of $\text{C}_0(\R^N):=\overline{\text{C}_c(\R^N)}$ in $L^{\infty}(\R^N)$. Hence,  by the Banach-Aloglu theorem it follows that  $\nu_n \wrastar \nu$ in $\mathcal{M}(\R^N)$. 
It is important to mention that the measure $\nu$ together with the quantity $\nu_{\infty}$ captures the possible failure of strong convergence of $(w_n)$ in $L^{p^*}(\Om).$
Indeed, if $\nu=0=\nu_{\infty}$, then  $w_n \ra w$ in $L^{p^*}(\Om)$ (see Corollary \ref{Cor_ConCpct}), and consequently, $\E_{g,\mu}(\Om)$  is attained at $w$. If $\nu \neq 0 $ (or $\nu_{\infty}\neq 0$), then we say that $(w_n)$ concentrate on $\overline{\Om}$ (or at $\infty$).    
To study the concentration behaviour of $\nu$, we consider the following concentration function of $g$:  
\begin{align*}
\C_{g,\mu}(x):= \lim_{r \ra 0} \E_{g,\mu}(\Om \cap B_r(x)), \ \ \quad
\C_{g,\mu}(\infty):=  \lim_{R \ra \infty} \E_{g,\mu}(\Om \cap B_R^c)  \,. 
\end{align*}
We denote $\C_{g,\mu}^*(\Om) =  \displaystyle \inf_{\overline{\Om}} \C_{g,\mu}(x).$
Notice that 
\begin{align} \label{ineqqq}
     \E_{g,\mu}(\Om) \leq \C_{g,\mu}^* (\Om) \ \ \mbox{and} \ \  \E_{g,\mu}(\Om) \leq \C_{g,\mu}(\infty) \,.
\end{align}
\noi Later we see that the nature of the above inequalities (strict or not)
helps us to  determine whether $\nu$ is concentrated or not.
For the brevity, we make the following definition.
\begin{definition} \label{crisubcri}
Let $g \in \mathcal{H}_p(\Om)$ and $\mu \in (0, \mu_1(g))$. We say 
\begin{enumerate}[(i)]
 \item   $g$ is {\bf sub-critical in $\Om$} if $\E_{g,\mu}(\Om) < \C_{g,\mu}^*(\Om)$,  and {\bf sub-critical at infinity} if $\E_{g,\mu}(\Om) < \C_{g,\mu} (\infty)$, 
 \item  $g$ is  {\bf critical in $\Om$}
 if $\E_{g,\mu}(\Om) = \C_{g,\mu}^*(\Om)$,  and {\bf critical  at infinity}
 if $\E_{g,\mu}(\Om) = \C_{g,\mu} (\infty)$.
 \end{enumerate}
\end{definition}

\noi Let $$ \mathcal{F}_{p}(\Om):= \overline{\text{C}_c^{\infty}(\Om)}  \ \mbox{in} \ \mathcal{H}_p(\Om) \,. $$ In \cite{New}, it is proved that $\mathcal{F}_{p}(\Om)$ is the optimal space for $g$ such that $G_p$ is compact in $\Dp$. In this article, we assume that $g$ satisfies:
\begin{enumerate}[label={($\bf H1$)}]
\item $g \in \mathcal{H}_p(\Om), g^{-} \in \mathcal{F}_{p}(\Om) , \ \text{and} \  |\overline{{\Sigma}_g}|=0 \,.$ \label{H1}  
\end{enumerate}
Now we state our first result.
\begin{theorem} \label{mainthm}
Let $\Om$ be a domain in $\R^N$ and $g$ satisfies {\rm{\ref{H1}}}. If for some $\la \in (0,\mu_1(g)),$  $g$ is  sub-critical in $\Om$ and at infinity,  then \eqref{Critical} admits a positive solution for  $\mu=\la$.
\end{theorem}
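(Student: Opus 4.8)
The plan is to show that under the subcriticality hypotheses, a minimizing sequence for $\E_{g,\la}(\Om)$ is precompact up to a suitable scaling, and hence the infimum is attained, yielding a nonnegative solution which is then upgraded to a positive one.

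\medskip

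\noindent\textbf{Step 1: Set up the minimizing sequence and the associated measures.} Take a minimizing sequence $(w_n)\subset \mathbb{S}_p(\Om)$ for $\E_{g,\la}(\Om)$. By the equivalence of $\norm{\cdot}_{\D_p,\la}$ and $\norm{\cdot}_{\D_p}$ for $\la\in(0,\mu_1(g))$, the sequence is bounded in $\Dp$, so after passing to a subsequence $w_n\wra w$ in $\Dp$, $w_n\ra w$ a.e., and (extending by zero) $w_n\wra w$ in $\D_p(\R^N)$. Following the discussion preceding the theorem, form the concentration measures: let $\mu_n=|\Gr w_n|^p\,\dx$ and $\nu_n=|w_n|^{p^*}\,\dx$, and pass to weak-$*$ limits $\mu_n\wrastar \mu$, $\nu_n\wrastar\nu$ in $\M(\R^N)$, together with the quantities $\mu_\infty,\nu_\infty$ measuring the mass escaping to infinity. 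The key structural input — which I expect to be proven earlier in the paper as a $g$-dependent concentration compactness principle — is a decomposition of the form
\begin{align*}
\nu &= |w|^{p^*}\,\dx + \sum_{j\in \mathcal{J}} \nu_j \de_{x_j}, \\
\mu &\geq |\Gr w|^p\,\dx + \la g|w|^p\,\dx^{-} \text{-corrected, plus } \sum_{j\in \mathcal{J}} \mu_j\de_{x_j},
\end{align*}
with the crucial localized Sobolev-type bounds $\mu_j \ge \C_{g,\la}(x_j)\,\nu_j^{p/p^*}$ at each concentration point $x_j$ and $\mu_\infty \ge \C_{g,\la}(\infty)\,\nu_\infty^{p/p^*}$ at infinity. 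The fact that $g^-\in\wsp(\Om)$ (so $G_p$ is compact up to the concentration set) and $|\overline{\Sigma_g}|=0$ are exactly what make the negative part of $g$ harmless and confine the anomalous behaviour of $g$ to a null set, so that the standard Lions-type arguments localize correctly.

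\medskip

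\noindent\textbf{Step 2: Rule out concentration using strict subcriticality.} The normalization $\nu(\R^N)+\nu_\infty \ge \lim \nu_n(\R^N) \cdots$; more precisely, $1 = \norm{w}_{p^*}^{p^*} + \sum_j \nu_j + \nu_\infty$ (after the Brezis--Lieb splitting). Applying $J_{g,\la}$ along the sequence and using the lower semicontinuity plus the localized bounds gives
\[
\E_{g,\la}(\Om) = \lim_n J_{g,\la}(w_n) \ge \E_{g,\la}(\Om)\,\norm{w}_{p^*}^p + \sum_j \C_{g,\la}(x_j)\,\nu_j^{p/p^*} + \C_{g,\la}(\infty)\,\nu_\infty^{p/p^*}.
\]
Since $\C_{g,\la}(x_j)\ge \C_{g,\la}^*(\Om) > \E_{g,\la}(\Om)$ and $\C_{g,\la}(\infty) > \E_{g,\la}(\Om)$ by the subcriticality hypotheses at $\la$, and using the elementary convexity inequality $t^{p/p^*}\ge t$ for $t\in[0,1]$, a comparison of the two sides — together with the constraint that the masses sum to $1$ — forces $\nu_j=0$ for all $j$ and $\nu_\infty=0$, and moreover $\norm{w}_{p^*}=1$. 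Here I would be careful to also record that $\E_{g,\la}(\Om)>0$ (which follows since $\E_{g,\la}(\Om)\ge \E_{0,0}(\R^N)>0$ by the subcriticality chain, or directly from the Sobolev inequality and $\la<\mu_1(g)$), so the inequality is genuinely restrictive.

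\medskip

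\noindent\textbf{Step 3: Conclude attainment and positivity.} With $\nu=|w|^{p^*}\dx$ and $\nu_\infty=0$, Corollary \ref{Cor_ConCpct} (the no-concentration case) gives $w_n\ra w$ strongly in $L^{p^*}(\Om)$, hence $\norm{w}_{p^*}=1$, i.e. $w\in\mathbb{S}_p(\Om)$; by weak lower semicontinuity of $J_{g,\la}$ (using compactness of the $g^-$ term and Fatou for the $g^+$ term, or rather weak continuity of $G_p$ modulo the now-absent concentration set) we get $J_{g,\la}(w)\le \liminf J_{g,\la}(w_n)=\E_{g,\la}(\Om)$, so $w$ is a minimizer. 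Replacing $w$ by $|w|$ does not increase $J_{g,\la}$ and preserves the constraint, so we may take $w\ge 0$. Then $v:=[J_{g,\la}(w)]^{1/(p^*-p)}w$ solves \eqref{Critical} with $\mu=\la$, as noted after \eqref{min}. Finally, $v\ge 0$ solves $-\De_p v = (\la g + v^{p^*-p})v^{p-1}$; since $g\in\mathcal{H}_p(\Om)\subset L^1_{loc}$ and $v\in L^{p^*}_{loc}$, a Moser/De Giorgi iteration gives $v\in L^\infty_{loc}$, and then the Harnack inequality for the $p$-Laplacian (e.g. Trudinger, or Serrin) yields $v>0$ in $\Om$.

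\medskip

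\noindent\textbf{Main obstacle.} The delicate point is Step 2: the clean localized lower bounds $\mu_j \ge \C_{g,\la}(x_j)\nu_j^{p/p^*}$ and $\mu_\infty \ge \C_{g,\la}(\infty)\nu_\infty^{p/p^*}$ must be extracted from the $g$-dependent concentration compactness principle, and this is where the hypothesis $|\overline{\Sigma_g}|=0$ together with $g^-\in\wsp(\Om)$ does the real work — one needs that, localized near a concentration point, the potential term $\la g|w_n|^p$ contributes no mass (since any point where $g$ could concentrate lies in a set of measure zero, hence the weak-$*$ limit of $g|w_n|^p$ charges no atom there) so that the local constant governing the inequality is precisely the \emph{unperturbed-weighted} constant $\C_{g,\la}(x_j)$. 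Handling the interaction between the rescaling at a concentration point and the behaviour of $g$ near $\Sigma_g$, and doing the bookkeeping so that the total-mass constraint $\sum \nu_j + \nu_\infty + \norm{w}_{p^*}^{p^*}=1$ is compatible only with the trivial decomposition, is the technical heart of the argument.
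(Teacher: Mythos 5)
Your Steps 1 and 2 reproduce the paper's proof essentially verbatim: the same minimizing sequence, the same $g$-dependent concentration-compactness statement (Corollary~\ref{Cor_ConCpct}, which is exactly the ``key structural input'' you postulate), and the same chain of inequalities
$\E_{g,\la}(\Om)\ \ge\ \E_{g,\la}(\Om)\,\|w\|_{p^*}^{p}+\C_{g,\la}^{*}(\Om)\,\|\nu\|^{p/p^*}+\C_{g,\la}(\infty)\,\nu_\infty^{p/p^*}$,
combined with the total-mass identity $\|w\|_{p^*}^{p^*}+\|\nu\|+\nu_\infty=1$ and strict subcriticality to force $\nu=0=\nu_\infty$ (the paper uses subadditivity of $t\mapsto t^{p/p^*}$ where you use $t^{p/p^*}\ge t$ on $[0,1]$; both work, and positivity of $\E_{g,\la}(\Om)$ is not even needed). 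The role you assign to $g^-\in\mathcal{F}_p(\Om)$ and $|\overline{\Sigma_g}|=0$ is also the one they play in the paper (Proposition~\ref{req_proposition} and Lemma~\ref{ConCpct:2}).

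The one step that would not survive as written is positivity. You propose Moser iteration plus the Serrin/Trudinger Harnack inequality for $-\De_p v=(\la g+v^{p^*-p})v^{p-1}$, but those require the zeroth-order coefficient to lie in $L^q_{loc}$ for some $q>N/p$, and a Hardy potential $g\in\mathcal{H}_p(\Om)$ need not satisfy this (e.g.\ $g(x)=|x|^{-p}$ is admissible here but fails to be in $L^{N/p}_{loc}$ near the origin). The paper avoids this by moving the negative part to the left: from $-\De_p v+\la g^{-}v^{p-1}=\la g^{+}v^{p-1}+v^{p^*-1}\ge 0$ it applies the strong maximum principle of Lemma~\ref{strongmax}, which only needs $g^{-}\in L^1_{loc}(\Om)$ and $g^{-}v^{p-1}\in L^1_{loc}(\Om)$. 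A second, smaller slip: ``Fatou for the $g^+$ term'' gives the wrong-direction inequality for lower semicontinuity of $J_{g,\la}$; the correct bookkeeping is that the possible escape of mass of $\int g|w_n|^p$ into $\|\gamma\|+\gamma_\infty$ is already absorbed into the concentration terms in Corollary~\ref{Cor_ConCpct}$(h)$, so once $\nu=\nu_\infty=0$ the very chain of inequalities you wrote forces $J_{g,\la}(w)=\E_{g,\la}(\Om)$ without a separate semicontinuity argument.
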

\noi Our proof for the above theorem is based on the fact that, if $g$ is subcritical in $\Om$ and at infinity, then $\nu =0=\nu_{\infty}$ for any minimising sequence of $\E_{g,\mu}(\Om)$. In other words, the subcriticality (in $\Om$ and at $\infty$) of $g$ ensures that the minimising sequence does not concentrate in $\overline{\Om}$ as well as at $\infty$. 




Now, it is natural to ask whether the compact perturbations are sub-critical or not? The answer is  no, for example, $g \equiv0$ is a compact perturbation (as $G_p \equiv 0$ on $\Dp$) but not a sub-critical potential. On a bounded domain $\Om$,  $g \equiv 1$ is  a compact perturbation and  it is sub-critical for all $\mu \in (0,\mu_1(g))$ if $N\ge p^2$, see \cite[Theorem 1.2]{Nirenberg} (for $p=2$) and \cite[Theorem 4.2]{Clement} (for general $p$).  
Next we show that every   compact perturbation (with non-trivial positive part) behaves like $g \equiv 1$.
\begin{theorem} \label{perfect}
Let $N \geq p^2$ and $g \in \mathcal{F}_{p}(\Om)$ be such that $g^+\neq 0$. 
Then $g$ is {{sub-critical in $\Om$ and at the infinity}} for all $\mu \in (0,\mu_1(g))$.
\end{theorem}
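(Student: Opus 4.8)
The plan is to show that, under the hypotheses $N \ge p^2$ and $g^+ \ne 0$, both strict inequalities in \eqref{ineqqq} hold for every $\mu \in (0,\mu_1(g))$. The first observation is that because $g \in \F_p(\Om)$, the map $G_p$ is compact on $\Dp$; consequently, for any ball $B_r(x)$ (or complement $B_R^c$), the zero-extension of functions supported there shows that as the support shrinks, the weighted term $\int g|u|^p$ contributes nothing to the concentration energies in the limit. More precisely, I expect to prove that $\C_{g,\mu}(x) = S$ for every $x \in \overline{\Om}$ and $\C_{g,\mu}(\infty) = S$, where $S = S_{N,p}$ is the best constant in the Sobolev inequality $\|\nabla u\|_p^p \ge S \|u\|_{p^*}^p$ on $\dpR$; this is because on vanishingly small balls the functional $J_{g,\mu}$ is asymptotically just $\int|\nabla u|^p$ (the compactness of $G_p$ localizes to give $\sup_{\|u\|_{p^*}=1,\,\mathrm{supp}\,u \subset B_r} \int|g||u|^p \to 0$ as $r \to 0$, and similarly at infinity). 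Hence $\C_{g,\mu}^*(\Om) = \C_{g,\mu}(\infty) = S$, and it suffices to prove the single strict inequality $\E_{g,\mu}(\Om) < S$ for all $\mu \in (0,\mu_1(g))$.

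The core of the argument is then a test-function estimate in the spirit of Brézis--Nirenberg. Since $g^+ \ne 0$, pick a point $x_0 \in \Om$ and a small ball $B_{2\rho}(x_0) \subset \Om$ on which $\int_{B_\rho(x_0)} g^+ > 0$ (using that $g^+$ has nontrivial mass and, after possibly translating, a Lebesgue-type point argument). Localize the Aubin--Talenti instanton $U_\eps(x) = c_{N,p}\big(\eps^{1/(p-1)}/(\eps^{p/(p-1)} + |x-x_0|^{p/(p-1)})\big)^{(N-p)/p}$ with a cutoff $\varphi \in \C_c^\infty(B_{2\rho}(x_0))$, set $u_\eps = \varphi U_\eps$, and compute
\begin{align*}
J_{g,\mu}(u_\eps) &= \int_\Om |\nabla u_\eps|^p \dx - \mu \int_\Om g |u_\eps|^p \dx, \qquad \|u_\eps\|_{p^*}^{p} = \Big(\int_\Om |u_\eps|^{p^*}\dx\Big)^{p/p^*}.
\end{align*}
The standard expansions (see \cite{AlonsoEstimates, Guedda}) give $\int|\nabla u_\eps|^p = K_1 + O(\eps^{(N-p)/(p-1)})$ and $\int|u_\eps|^{p^*} = K_2 + O(\eps^{N/(p-1)})$ with $K_1/K_2^{p/p^*} = S$, while the perturbation term behaves like $\mu \int g|u_\eps|^p \asymp c\,\mu\,\eps^{p}$ up to lower order, since $p < (N-p)/(p-1)$ precisely when $N > p^2$ — and here $N \ge p^2$ forces this gap to be nonnegative, with the borderline $N = p^2$ handled by a logarithmic refinement of the estimate for $\int|u_\eps|^p$. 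Dividing, one obtains $\dfrac{J_{g,\mu}(u_\eps)}{\|u_\eps\|_{p^*}^p} = S - c\,\mu\,\eps^{p} + o(\eps^{p}) < S$ for $\eps$ small, where $c > 0$ uses $\mu > 0$ and $\int_{B_\rho(x_0)} g^+ > 0$. This yields $\E_{g,\mu}(\Om) < S = \C_{g,\mu}^*(\Om) = \C_{g,\mu}(\infty)$, i.e. $g$ is sub-critical in $\Om$ and at infinity.

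The main obstacle I anticipate is twofold. First, establishing that $\C_{g,\mu}(x) = S$ for \emph{all} $x \in \overline{\Om}$, including boundary points where $\Om \cap B_r(x)$ may be a genuinely smaller cone: here one must argue that the relevant infimum over $\D_p(\Om \cap B_r(x))$ still tends to $S$ — this is false for the pure Sobolev quotient near a boundary point in general, but it holds here because the concentration function only needs a lower bound $\C_{g,\mu}(x) \ge S$ (which follows from the global Sobolev inequality applied to zero-extensions together with $\mu\int g|u|^p \to 0$ on shrinking supports, so $J_{g,\mu}(u) \ge (S-o(1))\|u\|_{p^*}^p$), combined with $\E_{g,\mu}(\Om) \le \C_{g,\mu}^*(\Om)$ from \eqref{ineqqq}; so in fact I only need $\C_{g,\mu}^*(\Om) \ge S$ and $\C_{g,\mu}(\infty) \ge S$, not equality. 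Second, the critical dimension $N = p^2$ requires care: the term $\int |u_\eps|^p$ then carries a $|\log\eps|$ factor, so the competition is $\mu\,c\,\eps^{p}|\log\eps|$ against the $O(\eps^{(N-p)/(p-1)}) = O(\eps^{p})$ error terms, and the logarithm wins — this is exactly where the hypothesis $N \ge p^2$ (rather than $N > p^2$) is used, and it mirrors the classical Brézis--Nirenberg dichotomy at $N = p^2$.
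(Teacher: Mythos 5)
Your proposal is correct and follows essentially the same route as the paper: reduce both subcriticality claims to the single strict inequality $\E_{g,\mu}(\Om) < \E_{{\bf 0},\mu}(\Om)$ by using the compactness of $G_p$ (Proposition \ref{pastthm}) to get the lower bounds $\C_{g,\mu}^*(\Om), \C_{g,\mu}(\infty) \geq \E_{{\bf 0},\mu}(\Om)$, and then test with localized Aubin--Talenti instantons centered where $g$ is positive, with the $N \geq p^2$ dichotomy (power gain for $N>p^2$, logarithmic gain for $N=p^2$) exactly as in Lemma \ref{estimate}. The only cosmetic differences are your parametrization of $\eps$ (equivalent up to the substitution $\eps \mapsto \eps^{p/(p-1)}$) and your choice of concentration point via a Lebesgue point of $\{g>0\}$ rather than the paper's Lusin-theorem compact set $\mathrm{K}$ with $g \geq g_{min}>0$; both serve the same purpose.
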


\begin{rmk} \rm
 Indeed, the condition $N \geq p^2$ is crucial, for instance, if $p=2$ and $N=3$ then $g \equiv 1$  is not sub-critical for $\mu$ near $0.$ \cite[Corollary 1.1]{Nirenberg}.


\end{rmk}

Next we ask, what happens if  $g$ fails to be sub-critical either in $\Om$ or at infinity? Recall that, for the critical potential $g \equiv 0$, \eqref{Critical} does not admit any positive solution when $\Om$ is a bounded  star-shaped domain. However, it admits a positive solution when $\Om$ is an annular domain \cite[Section 4]{Kazdan} or entire $\R^N$ \cite{Lions2a}.
Similarly, if $\Om$ contains the origin, then $g(x)=\frac{1}{|x|^p}$ is critical in $\Om$ (see Example \ref{criatptsandinf}), and \eqref{Critical}  does not admit a positive solution for any $\mu \in \R$ \cite[Theorem 2.1]{Ghoussoub}, when $\Om$ is bounded star-shaped. On the other hand,  for the same $g$, \eqref{Critical}  does admit a  positive radial solution 
on entire $\R^N$ for every $\mu \in (0,\mu_1(g))$
\cite[Theorem 1.41]{Willem}. This indicates that, when the subcriticality fails, the symmetry  of the domain plays a vital role for the existence of solutions  of \eqref{Critical}. 
Here we study the non-subcritical cases under some additional symmetry assumptions on $\Om$ and $g$.
 
Consider the action of  a closed subgroup $H$ of $\mathcal{O}(N)$ (the group of all orthogonal matrices on $\R^N)$ on $\R^N$ given by $x\to h \cdot x,$ for $x\in \R^n$, $h\in H$, where $\cdot$ represents the matrix multiplication. We will be writing $h(x)$ instead of $h\cdot x$. For $x\in \R^N$, the orbit of $x$ is denoted by $Hx$ and for $E\subset \R^N$, the orbit of $E$ is denoted by $H(E)$. Thus, $Hx=\left \{h (x): h \in H \right \}$ and $H(E)=\{h(x): x \in E, h \in H\}$.
\begin{definition}
Let $\Om$ be a domain in $\R^N$ and $f:\Om \to \R.$ If  $H(\Om)=\Om,$ then we say $\Om$ is $H$-invariant. If $\Om$ is $H$-invariant and $f(h(x))=f(x),\forall\, h\in H$ (i.e., $f$ is constant on each  $H$ orbit), then we say $f$ is $H$-invariant.
\end{definition}
\noi For a  $H$-invariant domain $\Om$,  the $H$-action on $\Om$ naturally induces an action of $H$ on $\mathcal{D}_p(\Om)$ given by  $$\pi(h)(u)=u_h, \text{ where } u_h(x)=u(h^{-1} (x)).$$
Thus, $u\in \Dp$ is $H$-invariant if, and only if , $u_h=u.$
The set of all $H$-invariant functions in $\mathcal{D}_p(\Om)$ and $\mathbb{S}_p(\Om)$ are denoted by $\mathcal{D}_p(\Om)^H$ and $\mathbb{S}_p(\Om)^H$ respectively, i.e., 
\begin{align*}
 \mathcal{D}_p(\Om)^H &=\displaystyle\left\{u \in \mathcal{D}_p(\Om): u=u_h, \forall \,h \in H\right\} \\  \mathbb{S}_p(\Om)^H &=\left\{u \in \mathbb{S}_p(\Om): u=u_h, \forall \,h \in H \right\} \,. 
\end{align*}
\noi Clearly, for $H=\{Id_{\R^N}\}$, $\Dph=\Dp$ and $\mathbb{S}_p(\Om)^H=\mathbb{S}_p(\Om)$, and if $H=\mathcal{O}(N)$ and $\Om$ is a radial domain, then $\Dph$ and $\mathbb{S}_p(\Om)^H$ corresponds to the space of all radial functions in $\Dp$ and $\mathbb{S}_p(\Om)$ respectively. Next, we  consider a $H$-depended minimization problem analogous to \eqref{min}  $$\E_{g,\mu}^H(\Om) = \inf \left \{ J_{g,\mu}(u) : u \in \mathbb{S}_p(\Om)^H \right \}.$$
It is clear that $\E_{g,\mu}(\Om) \leq \E_{g,\mu}^H(\Om)$. It may happen that 
$\E_{g,\mu}^H(\Om)$ is attained in $\mathcal{D}_p(\Om)^H$ without $\E_{g,\mu}(\Om)$ being attained in $\mathcal{D}_p(\Om)$. 
Now one may ask, does a minimizer of $\E_{g,\mu}^H(\Om)$ actually solve $\eqref{Critical}$?
i.e., whether a critical point of $J_{g,\mu}$ over $\mathbb{S}_p(\Om)^H$ can be  a critical point of $J_{g,\mu}$ over $\mathbb{S}_p(\Om)$? 
The principle of symmetric criticality theory answers this question affirmatively. We used the following version of the principle of symmetric criticality by  Kobayashi and Ôtani. \cite[Theorem 2.2]{KOBAYASHI}.

\noi {\bf Theorem.} {\bf(Principle of symmetric criticality).}
{\it Let $V$ be a reflexive and strictly convex Banach space and  $H$ be a group that acts on $V$ isometrically i.e., $\|h(v)\|_V=\|v\|_V$, $\forall \, h \in H$, $v \in V$. If $F: V \mapsto \R$ is a $H$-invariant, ${\rm{C}}^1$, then 
$$ (F|_{V^H})'(v)=0 \ \mbox{implies} \ F'(v)=0  \ \mbox{and} \ v \in V^H \,,  $$
where $V^H$ is the set of all $H$-invariant elements of $V$.}

For the remaining part of this section, we make the following assumptions on $\Om$ and $g$:
\begin{enumerate}[label={($\bf H2$)}]
\item $\mbox{The domain} \ \Om \ \mbox{and the Hardy potential} \ g \ \mbox{are} \  H \mbox{-invariant} \,,$ where $H$ is a closed subgroup of $\mathcal{O}(N)$. \label{H2}  
\end{enumerate}
\noi For any closed subgroup $H$ of $\mathcal{O}(N)$, and $\Om,g$ as in \ref{H2}, it is easy to verify that
$$ \int_{\Om} [|\nabla u_h|^p - \mu g |u_h|^p ] \dx = \displaystyle \int_{\Om} [|\nabla u|^p - \mu g |u|^p ] \dx \,,$$
i.e., $J_{g,\mu}$ is $H$-invariant, and also $J_{g,\mu}$ is $\text{C}^1$ as $g \in \mathcal{H}_p(\Om).$ Thus,
the principle of symmetric criticality holds for $J_{g,\mu}$, and hence a minimizer of $\E_{g,\mu}^H(\Om)$ (if exists) solves $\eqref{Critical}$ (up to a constant multiple).  

Now, for $H,\Om,$ and $g$ as given in \ref{H1} and \ref{H2}, we investigate whether $\E_{g,\mu}^H(\Om)$ is achieved or not. For this purpose, analogous to $\C_{g,\mu}$,  we introduce a $H$-depended concentration function of $g$  as follows:
\begin{eqnarray*} \C_{g,\mu}^H(x):= \lim_{r \ra 0} \E_{g,\mu}^H(\Om \cap H(B_r(x))) \quad \ \mbox{and} \quad \
\C_{g,\mu}^{H}(\infty):=  \lim_{R \ra \infty} \E_{g,\mu}^H(\Om \cap B_R^c)  
  \end{eqnarray*}
\noi and $\C_{g,\mu}^{H,*}(\Om)  :=  \displaystyle \inf_{\overline{\Om}} \C_{g,\mu}^H(x) \,.$ Clearly, $\C_{g,\mu}^H$ is constant on each $H$-orbits, and if $H=\{Id_{\R^N}\}$, then $\C_{g,\mu}^H = \C_{g,\mu}$. Now, in the similar fashion as in Definition \ref{crisubcri}, we make the following definition:
\begin{definition} Let $g \in \mathcal{H}_p(\Om)$ be $H$-invariant. Then for a $\mu$  in $(0, \mu_1(g))$, we say 
\begin{enumerate}[(i)]
    \item $g$ is  {\bf{$H$-subcritical in $\Om$}} if $ \E_{g,\mu}^H(\Om)<\C_{g,\mu}^{H,*}(\Om)$,
    and {\bf{$H$-subcritical at infinity}} if $ \E_{g,\mu}^H(\Om)<\C_{g,\mu}^{H}(\infty)$.
    \item $g$ is {\bf{$H$-critical in $\Om$}} if $ \E_{g,\mu}^H(\Om)=\C_{g,\mu}^{H,*}(\Om)$,
    and {\bf{$H$-critical at infinity}} if $\C_{g,\mu}^{H}(\infty)= \E_{g,\mu}^H(\Om)$.
\end{enumerate}
\end{definition}
Now analogues to Theorem 1.3,  we have the following result:
\begin{theorem} \label{trivial}
Let $H$, $\Om$  and $g$ as given in {\rm{\ref{H1}} and \ref{H2}}. For  $\la \in (0,\mu_1(g))$, assume that $g$ is $H$-subcritical in $\Om$ and at infinity. Then \eqref{Critical} admits a positive solution for $\mu=\la$.  \end{theorem}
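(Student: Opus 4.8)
The plan is to mirror the proof of Theorem \ref{mainthm} (the non-symmetric case) step by step, but carrying everything out inside the $H$-invariant subspace $\mathcal{D}_p(\Om)^H$ and invoking the principle of symmetric criticality at the very end. First I would take a minimizing sequence $(w_n)$ for $\E_{g,\la}^H(\Om)$ in $\mathbb{S}_p(\Om)^H$; normalizing with $\|u\|_{\D_p,\la}$, this sequence is bounded in $\mathcal{D}_p(\Om)$, and since $\mathcal{D}_p(\Om)^H$ is a closed (hence weakly closed) subspace, we may extract a subsequence with $w_n \wra w$ for some $w \in \mathcal{D}_p(\Om)^H$. As in the main theorem I would then set up the measures $\nu_n$, their weak-$*$ limit $\nu$, and the concentration-at-infinity quantity $\nu_\infty$, together with the companion ``gradient'' measures recording $|\nabla(w_n-w)|^p$; the concentration-compactness machinery referenced in the excerpt (the $g$-dependent principle, Corollary \ref{Cor_ConCpct}) applies verbatim because these objects do not care about the $H$-symmetry.

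The crux is to show that $H$-subcriticality in $\Om$ and at infinity forces $\nu = 0$ and $\nu_\infty = 0$. For the concentration at a point $x_0 \in \overline\Om$, the standard argument localizes the minimizing sequence near $x_0$ by a cut-off and compares energies; the key point is that, because $w_n$ and the cut-off can be chosen compatibly with the group action (replacing an ordinary ball $B_r(x_0)$ by its orbit $H(B_r(x_0))$, which is exactly why $\C_{g,\la}^H$ is defined using $H(B_r(x))$), the mass carried by a concentration point is spread over its whole orbit and contributes at least $\C_{g,\la}^H(x_0) \ge \C_{g,\la}^{H,*}(\Om)$ to the limiting energy. Combined with the mass that remains at $w$, one derives the reverse of the trivial inequality $\E_{g,\la}(\Om) \le \C_{g,\la}^{H,*}(\Om)$ at the $H$-level, i.e. $\E_{g,\la}^H(\Om) \ge \C_{g,\la}^{H,*}(\Om)$ unless $\nu = 0$; the strict $H$-subcriticality hypothesis then rules out $\nu \ne 0$. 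The same argument with $B_R^c$ in place of $H(B_r(x))$ and $\C_{g,\la}^H(\infty)$ in place of $\C_{g,\la}^{H,*}(\Om)$ disposes of $\nu_\infty$. Here the fact that $g^- \in \mathcal{F}_p(\Om)$ (so that $G_p$-type terms involving $g^-$ pass to the limit along weak convergence) and $|\overline{\Sigma_g}| = 0$ (so that the ``good'' part of the potential does not itself concentrate) are used exactly as in Theorem \ref{mainthm}; I expect the careful bookkeeping of the orbit-wise mass, and checking that the orbit $H(B_r(x_0))$ does not behave pathologically (it may, a priori, have several connected components or wrap around), to be the main obstacle.

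Once $\nu = 0 = \nu_\infty$, Corollary \ref{Cor_ConCpct} gives $w_n \to w$ strongly in $L^{p^*}(\Om)$, so $\|w\|_{p^*} = 1$, i.e. $w \in \mathbb{S}_p(\Om)^H$; weak lower semicontinuity of $\|\cdot\|_{\D_p,\la}$ together with the weak continuity of $u \mapsto \int_\Om g^- |u|^p\,\dx$ on $\mathcal{D}_p(\Om)$ (from $g^- \in \mathcal{F}_p(\Om)$) then yields $J_{g,\la}(w) \le \liminf J_{g,\la}(w_n) = \E_{g,\la}^H(\Om)$, so $w$ is a minimizer. Replacing $w$ by $|w|$ (which is also $H$-invariant, lies in $\mathbb{S}_p(\Om)^H$, and does not increase $J_{g,\la}$) we may assume $w \ge 0$. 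Now $J_{g,\la}$ restricted to $\mathbb{S}_p(\Om)^H$ has a critical point at $w$, and since $J_{g,\la}$ is $H$-invariant and $\text{C}^1$ and $\mathcal{D}_p(\Om)$ (with the norm $\|\cdot\|_{\D_p,\la}$, which is uniformly convex and on which $H$ acts isometrically) is reflexive and strictly convex, the principle of symmetric criticality quoted in the excerpt promotes $w$ to a critical point of $J_{g,\la}$ over all of $\mathbb{S}_p(\Om)$; by the Lagrange multiplier computation recalled after \eqref{min}, the function $[J_{g,\la}(w)]^{1/(p^*-p)} w$ is a non-trivial, non-negative solution of \eqref{Critical} with $\mu = \la$. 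Finally, the Harnack inequality / strong maximum principle for the $p$-Laplacian upgrades non-negativity to positivity, completing the proof.
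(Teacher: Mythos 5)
Your proposal is correct and follows essentially the same route as the paper: the paper's proof of Theorem \ref{trivial} simply repeats the argument of Theorem \ref{mainthm} inside $\mathcal{D}_p(\Om)^H$ using the $H$-dependent concentration-compactness statement (Corollary \ref{Cor_ConCpct}), rules out $\nu$ and $\nu_\infty$ via the strict $H$-subcriticality hypotheses, and then invokes the principle of symmetric criticality and the strong maximum principle exactly as you do. The details you flag as potential obstacles (orbit-wise bookkeeping, handling $g^-$ via Proposition \ref{req_proposition}) are already absorbed into Corollary \ref{Cor_ConCpct} and the proof of Theorem \ref{mainthm}, so nothing further is needed.
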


\begin{rmk}
Indeed, there are Hardy potentials that are not subcritical but $H$-subcritical for some $H$, see Example \ref{criticalatpts}-$(iii)$. 
\end{rmk}

Next, to understand the situation when  $g$ fails to be $H$-subcritical either in $\Om$ or  at infinity,  
we consider a minimizing seqeunce $(w_n)$ of $\E^H_{g,\mu}(\Om)$.  
In this case, $(w_n)$ can  concentrate only on a finite $H$-orbit in $\overline{\Om}$ and or at infinity (see Corollary \ref{Cor_ConCpct}). In particular, if all the $H$-orbits are infinite under the $H$-action, then $(w_n)$ can not concentrate anywhere in $\overline{\Om}$. 
These ideas leads to our next result.
\begin{theorem} \label{symmetricsol}
Let $H$, $\Om$ and $g$ be as given in {\rm{\ref{H1}} and \ref{H2}} such that the orbits $Hx$ are infinite for all $x \in \overline{\Om}$. Assume that for some 
  $\la \in (0,\mu_1(g))$, $g$ is $H$-subcritical at infinity. Then \eqref{Critical} admits a positive solution for $\mu=\la$.  \end{theorem}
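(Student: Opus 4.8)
The plan is to run the same concentration-compactness scheme that underlies Theorems \ref{mainthm} and \ref{trivial}, but now inside the symmetric subspace $\mathcal{D}_p(\Om)^H$, and to exploit the hypothesis that every orbit $Hx$ is infinite to kill the possibility of interior concentration for free. Let $(w_n)$ be a minimizing sequence for $\E_{g,\mu}^H(\Om)$, which we may take to be nonnegative (replace $w_n$ by $|w_n|$; this stays in $\mathbb{S}_p(\Om)^H$ since $g,\Om$ are $H$-invariant). Since $\mu=\la\in(0,\mu_1(g))$, the quasi-norm $\|\cdot\|_{\D_p,\mu}$ is equivalent to $\|\cdot\|_{\D_p}$, so $(w_n)$ is bounded in $\mathcal{D}_p(\Om)^H$; passing to a subsequence, $w_n\wra w$ in $\mathcal{D}_p(\Om)^H$, and by the weak lower semicontinuity of the $H$-action and Mazur's lemma the limit $w$ is again $H$-invariant. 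We form the measures $\nu_n$, the weak-$*$ limit $\nu$, and the quantity $\nu_\infty$ exactly as in the excerpt, together with the analogous defect measures for $|\nabla w_n|^p$; the $H$-invariance of $w_n$ forces $\nu$ (and the gradient defect measure) to be $H$-invariant measures, hence supported on $H$-invariant sets.

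First I would invoke the ($H$-)concentration compactness dichotomy (the result quoted as Corollary \ref{Cor_ConCpct} in the excerpt): for a minimizing sequence in $\mathbb{S}_p(\Om)^H$, the atomic part of $\nu$ can only sit on a \emph{finite} $H$-orbit contained in $\overline{\Om}$, and the mass at infinity is governed by $\C_{g,\mu}^H(\infty)$. By hypothesis every orbit $Hx$ with $x\in\overline{\Om}$ is infinite, so $\nu$ carries no atoms on $\overline{\Om}$ whatsoever; combined with the usual Brezis--Lieb / Sobolev-inequality bookkeeping on the defect measures this yields $\nu=0$, i.e. no interior concentration. It remains to rule out escape of mass to infinity. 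Here I would use the $H$-subcriticality at infinity: if $\nu_\infty>0$ then, splitting the functional with a cutoff $\chi_{B_R^c}$ and letting $n\to\infty$ and $R\to\infty$, one gets $\E_{g,\mu}^H(\Om)\ge (1-\nu_\infty^{(p-p^*)/\,?})\,\C_{g,\mu}^H(\infty)\cdot(\text{something})+\nu_\infty^{p/p^*}\C_{g,\mu}^H(\infty)\ge \min\{\E_{g,\mu}^H(\Om),\C_{g,\mu}^H(\infty)\}$ with equality only if $\nu_\infty\in\{0,1\}$; the strict inequality $\E_{g,\mu}^H(\Om)<\C_{g,\mu}^H(\infty)$ then forces $\nu_\infty=0$. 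With $\nu=0=\nu_\infty$, Corollary \ref{Cor_ConCpct} gives $w_n\to w$ in $L^{p^*}(\Om)$, hence $\|w\|_{p^*}=1$ and $J_{g,\mu}(w)\le\liminf J_{g,\mu}(w_n)=\E_{g,\mu}^H(\Om)$ by weak lower semicontinuity, so $w\in\mathbb{S}_p(\Om)^H$ attains $\E_{g,\mu}^H(\Om)$.

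Finally I would conclude as in Theorem \ref{trivial}: $w$ is a minimizer of $J_{g,\mu}$ restricted to $\mathbb{S}_p(\Om)^H$, hence a critical point of $J_{g,\mu}|_{\mathcal{D}_p(\Om)^H}$ on the constraint; since $J_{g,\mu}$ is $\text{C}^1$ and $H$-invariant on the reflexive, strictly convex space $\mathcal{D}_p(\Om)$ (with $H$ acting isometrically by $\pi$), the Kobayashi--Ôtani principle of symmetric criticality upgrades this to a critical point of $J_{g,\mu}$ on $\mathbb{S}_p(\Om)$, so $[J_{g,\mu}(w)]^{1/(p^*-p)}w$ solves \eqref{Critical} with $\mu=\la$. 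Replacing $w$ by $|w|$ and applying the strong maximum principle for the $p$-Laplacian gives positivity. The main obstacle is the infinity analysis: one must be careful that the cutoff argument at infinity uses the $H$-invariant concentration function $\C_{g,\mu}^H(\infty)$ and not $\C_{g,\mu}(\infty)$, and that the tail of $g^-$ (which lies in $\F_p(\Om)$, hence is a compact perturbation) contributes nothing in the limit $R\to\infty$ — this is exactly where hypothesis \ref{H1} is used and where the estimate must be made quantitatively, mirroring the proof of Theorem \ref{mainthm}. Interior concentration, by contrast, is essentially automatic once the infinite-orbit hypothesis is in place, which is the conceptual payoff of this theorem.
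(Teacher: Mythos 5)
Your proposal is correct in outline and follows the same overall scheme as the paper: take a minimizing sequence in $\mathbb{S}_p(\Om)^H$, apply the $H$-dependent concentration-compactness lemma, rule out interior concentration using the infinite-orbit hypothesis, rule out escape to infinity using $H$-subcriticality at infinity, and finish with the Kobayashi--\^{O}tani principle and the strong maximum principle. The one genuinely different step is how interior concentration is excluded. The paper first runs the full chain of inequalities from Corollary \ref{Cor_ConCpct}-(h), observes that equality must hold throughout, deduces that exactly one of $\|u\|_{p^*}$, $\|\nu\|$, $\nu_{\infty}$ equals $1$, and only in the case $u=0$, $\|\nu\|=1$ invokes the \emph{conditional} statement (Proposition \ref{ConCpct:new}, i.e.\ Corollary \ref{Cor_ConCpct}-(i)) that $\nu$ is concentrated on a single finite $H$-orbit, which the infinite-orbit hypothesis contradicts. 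You instead assert that the atoms of $\nu$ can only sit on finite $H$-orbits as if this were what the corollary says unconditionally; as a citation that is inaccurate, since item (i) requires $u=0$ and the equality $\E_{g,\mu}^H(\Om)\|\nu\|^{p/p^*}=\|\zeta_\mu\|$. However, the unconditional fact you need is true and follows from observations you already made: $\nu$ is a finite, $H$-invariant, purely atomic measure (Corollary \ref{Cor_ConCpct}-(a)), so every point of the orbit of an atom is an atom of the same positive mass, and a finite measure cannot charge infinitely many points equally; hence every atom lies on a finite orbit, and under the hypothesis $\nu=0$. This shortcut is valid and actually bypasses the paper's equality analysis for the interior part. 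Two smaller points to repair: your treatment of $\nu_\infty$ (with the unresolved exponent) should just be the standard concavity/strict-inequality contradiction exactly as in the proof of Theorem \ref{mainthm}, using $\E_{g,\mu}^H(\Om)<\C_{g,\mu}^H(\infty)$; and ``weak lower semicontinuity of $J_{g,\mu}$'' is not immediate because of the term $-\mu\int_\Om g^+|u|^p\,{\rm d}x$ --- the correct justification that the weak limit attains $\E_{g,\mu}^H(\Om)$ is Corollary \ref{Cor_ConCpct}-(h) with $\nu=0=\nu_\infty$ together with item (e).
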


\begin{rmk} \rm
For $2\le k\le N,$ let $H=\mathcal{O}(k)\times \{I_{N-k}\}$, where any $h \in H$ is considered as $\left(\begin{smallmatrix}
  h_k & 0_{N-k}\\
  0_{n-k} & I_{N-k}
\end{smallmatrix}\right)$; $h_k \in \mathcal{O}(k)$. Let
$\Om=\Om_k \times \Om_{N-k}$,  where $\Om_{N-k}$ is a general domain in $\R^{N-k}$ and
\begin{equation} \label{domain}
 \Om_k:= \left \{x \in \R^k :  a \leq |x| \leq b  \right \}^{\circ} \,, \ a,b \geq 0 \,, \end{equation}
$A^{\circ}$  denotes the interior of a set $A$.
Depending on the values of $a,b$, the above domain $\Om$ in \eqref{domain} can be a ball, an annuals, exterior of a ball, unbounded cylinders, or entire $\R^N$. In particular, if $k=N$, then $\Om$ is all radial domains in $\R^N.$ 
Observe that $\Om$ is $H$-invariant, and only the origin has a finite $H$-orbit. If $a>0$ then $0 \notin \overline{\Om},$ and hence Theorem \ref{symmetricsol} helps us to  identify $H$-invariant critical Hardy potentials on $\Om$ for which \eqref{Critical} admits a positive solution. For instance, for $k=N$ and $0<a<b<\infty$ (i.e., annular domains in $\R^N$),  $g \equiv 0$ is $O(N)$-subcritical at infinity (since the annular domains are bounded). Hence, for $g \equiv 0,$ \eqref{Critical} admits a positive solution. See Example \ref{criticalatpts} for more of such potentials.
\end{rmk} 
Next we address the case when $g$ is not necessarily $H$-subcritical at infinity. In this case, we assume that $\Om=\R^N$ and $g$ satisfies the following homogeneity condition:
\begin{enumerate}[label={($\bf H3$)}]
\item $g(rz) \geq \frac{g(z)}{r^p}; z \in \R^N, r >0 \,.$ \label{H3}  
\end{enumerate}
\begin{theorem} \label{atorigin}
Let $H$ be a closed subgroup of $\mathcal{O}(N)$ and $\Om=\R^N$ and $g$ be as in {\rm{\ref{H1}}} and {\rm{\ref{H2}}}.
\begin{enumerate}[(i)]
\item If $g$ is $H$-subcritical in $\R^N$ for some $\la \in (0,\mu_1(g))$ and satisfies {\rm{\ref{H3}}} for small values of $r>0$, then
\eqref{Critical} admits a positive solution for $\mu=\la$.
\item 
Let the orbits $Hx$ are infinite for all $x (\neq 0) \in \R^N.$ If $g$ satisfies {\rm{\ref{H3}}} for all $r>0$, then \eqref{Critical} admits a positive solution for all $\mu \in (0,\mu_1(g))$.
\end{enumerate}
\end{theorem}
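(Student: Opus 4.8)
The plan is to reduce both parts to the subcriticality/compactness machinery already developed, using the homogeneity hypothesis \ref{H3} to control the concentration function at infinity.

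\medskip

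\textbf{Proof strategy for (i).} Suppose $g$ is $H$-subcritical in $\R^N$ at the level $\la$. By Theorem \ref{trivial}, it suffices to show that $g$ is also $H$-subcritical at infinity, i.e. $\E_{g,\la}^H(\R^N) < \C_{g,\la}^H(\infty)$. The first step is to establish the \emph{scaling estimate}: for $u \in \mathbb{S}_p(\R^N)^H$ and $r>0$, let $u_r(x) := r^{-\frac{N-p}{p}} u(x/r)$, which preserves both the $H$-invariance (since dilations commute with $\mathcal{O}(N)$) and the constraint $\norm{u_r}_{p^*} = 1$. A change of variables gives $\int |\nabla u_r|^p = \int |\nabla u|^p$ and $\int g |u_r|^p \, \dx = \int g(ry)\, r^p |u(y)|^p \, \dy \geq \int g(y) |u(y)|^p \, \dy$ using \ref{H3} for small $r$ (here one must be careful: \ref{H3} reads $g(rz) \geq g(z)/r^p$, so after the substitution $x = ry$ one indeed gets $r^p g(ry) \geq g(y)$). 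Hence $J_{g,\la}(u_r) \le J_{g,\la}(u)$ for all small $r$, so $\E_{g,\la}^H(\R^N)$ is unchanged, but more importantly, pushing mass toward infinity via large dilations lets one compare with the concentration-at-infinity level: one shows $\C_{g,\la}^H(\infty) \geq \E_{g,\la}^H(\R^N)$ always (this is \eqref{ineqqq}), and the homogeneity forces the reverse-type rigidity so that \emph{strict} subcriticality in $\R^N$ propagates to infinity. Concretely, I would argue: if $\E_{g,\la}^H(\R^N) = \C_{g,\la}^H(\infty)$, take a near-optimal $u$ for $\E_{g,\la}^H(\R^N)$ supported (approximately) in a ball, then dilate by a large factor $R$; by \ref{H3} the energy does not increase while the support escapes to infinity, contradicting that $\C_{g,\la}^H(\infty)$ is a strict infimum over far-out regions only if $\E^H < \C^H(\infty)$ — so the hypothesis $\E_{g,\la}^H(\R^N) < \C_{g,\la}^{H,*}(\R^N)$ combined with the dilation argument yields $\E_{g,\la}^H(\R^N) < \C_{g,\la}^H(\infty)$. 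Then Theorem \ref{trivial} applies.

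\medskip

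\textbf{Proof strategy for (ii).} Now assume all nonzero orbits are infinite and \ref{H3} holds for \emph{all} $r>0$. By Theorem \ref{symmetricsol}, it suffices to show that for \emph{every} $\mu \in (0,\mu_1(g))$, $g$ is $H$-subcritical at infinity, i.e. $\E_{g,\mu}^H(\R^N) < \C_{g,\mu}^H(\infty)$. (The hypothesis that all orbits $Hx$, $x \neq 0$, are infinite removes concentration on finite orbits in $\overline{\Om} = \R^N$, and the only finite orbit $\{0\}$ is handled because concentration there is again defeated by the full-range homogeneity: a sequence concentrating at the origin can be dilated \emph{outward} by small $r$, and \ref{H3} for all $r$ shows this only decreases energy, so the origin cannot be a genuine concentration point either — this is essentially the content that upgrades Theorem \ref{symmetricsol} to cover $0 \in \overline{\Om}$.) For the subcriticality at infinity: the scaling $u \mapsto u_r$ with \ref{H3} valid for all $r$ now shows $J_{g,\mu}(u_r) \le J_{g,\mu}(u)$ for all $r > 0$, both small and large. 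Using a large dilation moves a near-minimizer out near infinity without raising its energy, which gives $\C_{g,\mu}^H(\infty) \le \E_{g,\mu}^H(\R^N)$; combined with \eqref{ineqqq} this yields \emph{equality} — so to get the strict inequality we instead need to produce a genuinely better competitor. The idea: take the standard Aubin–Talenti–type extremal $U$ for the Sobolev constant (radial, hence $H$-invariant), and test $J_{g,\mu}$ on $U$. Since $g^+ \not\equiv 0$ (which follows from \ref{H1} together with $\mu_1(g)$ being finite and positive — actually one needs $g^+ \neq 0$, which I would extract from the standing hypotheses or note as implicit) and $\mu > 0$, we get $J_{g,\mu}(U) < \norm{\nabla U}_p^p$, so $\E_{g,\mu}^H(\R^N) < S$, the Sobolev constant. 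On the other hand, \ref{H3} at all scales combined with the pointwise decay $g(z) \le g(z_0)/|z|^p \cdot (\text{const})$-type bound forces $g \to 0$ at infinity in the relevant averaged sense, giving $\C_{g,\mu}^H(\infty) = S$ (the far-field problem degenerates to the pure critical problem whose $H$-symmetric level is still $S$ since radial near-extremals can be placed far out). Hence $\E_{g,\mu}^H(\R^N) < S = \C_{g,\mu}^H(\infty)$, and Theorem \ref{symmetricsol} delivers the solution for every $\mu \in (0,\mu_1(g))$.

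\medskip

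\textbf{Main obstacle.} The delicate point is the precise evaluation of the concentration-at-infinity level $\C_{g,\mu}^H(\infty)$. In part (ii) I claimed it equals the Sobolev constant $S$; justifying this requires showing that \ref{H3} (for all $r$) genuinely forces $g$ to decay at infinity fast enough that the $\mu g|u|^p$ term contributes nothing to the far-field energy, while simultaneously verifying that the $H$-symmetric critical level in an exterior region $B_R^c$ does not exceed $S$ (one places a rescaled bubble deep inside $B_R^c$, using that $H$-invariant radial bubbles exist and their Dirichlet energy approaches $S$). Handling the finite orbit $\{0\}$ in part (ii) — i.e. ruling out concentration at the origin despite it having a finite orbit — is the other subtle step; here the outward-dilation argument via \ref{H3} for all $r>0$ must be made rigorous within the concentration-compactness framework of Corollary \ref{Cor_ConCpct}, showing $\C_{g,\mu}^H(0) = \lim_{r\to 0}\E^H_{g,\mu}(\R^N \cap H(B_r(0))) = S$ as well (so concentration at $0$ would again require $\E^H_{g,\mu}(\R^N) = S$, which we have excluded). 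Everything else is routine change-of-variables and invocation of the already-proven Theorems \ref{trivial} and \ref{symmetricsol}.
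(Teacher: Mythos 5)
Your reduction of both parts to Theorems \ref{trivial} and \ref{symmetricsol} cannot work, because it requires $H$-subcriticality at infinity, and the hypothesis \ref{H3} pushes in exactly the opposite direction. As you yourself observe in part (ii), the dilation $u\mapsto u_r$ (using \ref{H3} for large $r$) shows $\C_{g,\mu}^{H}(\infty)\le \E_{g,\mu}^H(\R^N)$, i.e.\ $g$ is $H$-\emph{critical} at infinity; your subsequent claim that $\C_{g,\mu}^H(\infty)=S>\E_{g,\mu}^H(\R^N)$ then contradicts what you just derived, and it is in fact false for the model potential $g(z)=|z|^{-p}$: that potential is exactly scale-invariant, so the exterior problem on $B_R^c$ sees the same Hardy term at every scale and $\C_{g,\mu}^{H}(\infty)=\E_{g,\mu}^H(\R^N)<S$. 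In part (i) the ``contradiction'' never materializes either: dilating a near-minimizer only reproves $\C_{g,\mu}^H(\infty)\le\E_{g,\mu}^H(\R^N)$, which is perfectly consistent with the equality you assumed for contradiction (and a bump supported in a ball about the origin stays centred at the origin under dilation, so it is not even admissible for $\E^H_{g,\mu}(\Om\cap B_R^c)$). The whole point of Theorem \ref{atorigin} is to treat potentials that need \emph{not} be subcritical at infinity, so no argument deducing subcriticality at infinity from \ref{H3} can succeed.

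The idea your proposal is missing is to use the dilation not on test functions but to \emph{renormalize the minimizing sequence}. Pick $R_n$ with $\int_{B_{R_n}}|u_n|^{p^*}\,\dx\ge\frac{1}{2}$ (with equality in part (ii)) and set $w_n(z)=R_n^{(N-p)/p}u_n(R_nz)$; by \ref{H3} (applied at $r=1/R_n$, which may be taken small since $R_n$ can be taken large) one has $J_{g,\mu}(w_n)\le J_{g,\mu}(u_n)$, so $(w_n)$ is again a minimizing sequence, now with $\int_{B_1}|w_n|^{p^*}\,\dz\ge\frac{1}{2}$. The equality case of Corollary \ref{Cor_ConCpct} forces exactly one of $\|w\|_{p^*}^{p^*}$, $\norm{\nu}$, $\nu_\infty$ to equal $1$; the normalization gives $\nu_\infty\le\frac{1}{2}$, hence $\nu_\infty=0$, with no subcriticality at infinity needed. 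In part (i), $H$-subcriticality in $\R^N$ then kills $\norm{\nu}$ exactly as in Theorem \ref{mainthm}. In part (ii), Corollary \ref{Cor_ConCpct}-$(i)$ says a nonzero $\nu$ must be concentrated on a single finite orbit, which by hypothesis can only be $\{0\}$; but $\nu=\delta_0$ would force $\lim_n\int_{B_1}|w_n|^{p^*}\,\dz\ge 1$, contradicting the normalization at level $\frac{1}{2}$. This simultaneous exclusion of escape to infinity and collapse onto the origin by one well-chosen rescaling is the actual content of the proof, and it is absent from your proposal.
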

\begin{rmk}
Consider $g(x)= 0 \ \text{or} \ \frac{1}{|x|^p}$ on $\R^N$ and $H=O(N)$. Then $g$ satisfies {\rm \ref{H3}}, and the above theorem assures that \eqref{Critical} admits a positive solution on entire $\R^N$ for all $\mu \in (0,\mu_1(g))$.
\end{rmk}

Next we provide a necessary condition for the existence of a positive solution for \eqref{Critical}.
In this direction, a Pohozave type identity is available for  bounded star-shaped domain \cite[Theorem 1.1]{Egneel}.  
Here we establish such an identity for entire $\R^N$.  
\begin{theorem} \label{ness}
Let $g \in {\rm{C}}^{\al}_{loc}(\R^N)$, for some $\al \in (0,1)$ and $u \in \mathcal{D}_p(\R^N)$ such that $g(x)|u|^p \in L^1(\R^N)$ and also $x. \nabla g(x)|u|^p \in L^1(\R^N).$ Further, if $u$ solves
$$-\De_p u - g |u|^{p-2}u = |u|^{p^*-2}u \ \ \mbox{in} \ \R^N \,,$$
in weak sense, then the following holds:
$$\int_{\R^N} [x.\nabla g(x) + p g(x)]|u|^p \dx=0 .$$
\end{theorem}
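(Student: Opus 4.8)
The plan is to derive the Pohozaev-type identity by testing the weak formulation of the equation against a suitably scaled vector field, namely $v_\lambda(x) = u(\lambda x)$ (or rather the generator $x\cdot\nabla u$ of dilations), and differentiating in $\lambda$ at $\lambda = 1$. More precisely, I would work with the one-parameter family $u_\lambda(x) := u(\lambda x)$ for $\lambda > 0$, observe that each term of the energy functional transforms by an explicit power of $\lambda$ under this scaling, and then use the fact that $u$ is a critical point to conclude that the derivative of the scaled energy at $\lambda = 1$ must vanish. Concretely, for a weak solution one has the identity $\langle -\Delta_p u - g|u|^{p-2}u - |u|^{p^*-2}u, \phi\rangle = 0$ for all test functions $\phi$; the formal choice $\phi = x\cdot\nabla u$ (which must be justified by an approximation/truncation argument since $x\cdot\nabla u$ need not lie in $\mathcal{D}_p(\R^N)$) produces the desired relation.

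The key computational steps are the three integration-by-parts identities for the three terms. First, $\int_{\R^N} |\nabla u|^{p-2}\nabla u \cdot \nabla(x\cdot\nabla u)\,\dx = \frac{N-p}{p}\int_{\R^N}|\nabla u|^p\,\dx$ (up to sign), obtained by expanding $\nabla(x\cdot\nabla u) = \nabla u + (x\cdot\nabla)\nabla u$ and integrating the second piece by parts. Second, $\int_{\R^N}|u|^{p^*-2}u\,(x\cdot\nabla u)\,\dx = \frac{1}{p^*}\int_{\R^N} x\cdot\nabla(|u|^{p^*})\,\dx = -\frac{N}{p^*}\int_{\R^N}|u|^{p^*}\,\dx$. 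Third, and this is where $g$ enters nontrivially, $\int_{\R^N} g|u|^{p-2}u\,(x\cdot\nabla u)\,\dx = \frac{1}{p}\int_{\R^N} g\,x\cdot\nabla(|u|^p)\,\dx = -\frac{1}{p}\int_{\R^N}\nabla\cdot(g\,x)\,|u|^p\,\dx = -\frac{1}{p}\int_{\R^N}(x\cdot\nabla g + N g)|u|^p\,\dx$. Combining these with $\frac{N-p}{p} = \frac{N}{p^*}$ (since $p^* = \frac{Np}{N-p}$), the $|\nabla u|^p$ and $|u|^{p^*}$ contributions cancel against each other after using the weak equation tested against $u$ itself, leaving precisely $\int_{\R^N}[x\cdot\nabla g + pg]|u|^p\,\dx = 0$.

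The main obstacle is the rigorous justification of the above formal manipulations: the vector field $x\cdot\nabla u$ is generically not admissible as a test function (it may fail to be in $\mathcal{D}_p(\R^N)$, and the regularity $u \in \mathcal{D}_p(\R^N)$ alone does not obviously give enough decay at infinity or at the origin). The standard remedy, which I would follow, is to introduce a cutoff $\zeta_R(x) = \zeta(x/R)$ with $\zeta \equiv 1$ on $B_1$, $\zeta \equiv 0$ outside $B_2$, test against $\zeta_R\,(x\cdot\nabla u)$, and let $R \to \infty$; the error terms involve $\int_{B_{2R}\setminus B_R}$ of quantities controlled by $|\nabla u|^p$, $|u|^{p^*}$, and $g|u|^p$, all of which are integrable by hypothesis, so they vanish in the limit by dominated convergence. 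One must also invoke the local $\text{C}^{1,\beta}$ regularity of $u$ (from $g \in \text{C}^\alpha_{loc}$ and standard quasilinear elliptic regularity, e.g. DiBenedetto/Tolksdorf) so that $\nabla u$ is a genuine function and the pointwise chain-rule computations on $|u|^p$, $|u|^{p^*}$ make sense; the hypotheses $g|u|^p \in L^1$ and $x\cdot\nabla g\,|u|^p \in L^1$ are exactly what is needed to make the $g$-term and its boundary contributions finite and to pass to the limit. A secondary technical point is handling the possible lack of differentiability of $|u|^p$ where $u = 0$, which is dealt with by the usual approximation $|u|^p \approx (u^2 + \eps^2)^{p/2} - \eps^p$ and letting $\eps \to 0$.
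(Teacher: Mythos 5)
Your strategy is the same as the paper's: establish local regularity, test the equation against the dilation field $x\cdot\nabla u$ with a cutoff $\psi_n$ at infinity, integrate by parts, and combine the resulting identity with the one obtained by testing against $u$ itself (using $\tfrac{N}{p^*}=\tfrac{N-p}{p}$); your bookkeeping of the three terms and the final cancellation are correct and reproduce the paper's computation.

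The one genuine weak point is where you locate the technical difficulty. The approximation of $|u|^p$ near $\{u=0\}$ is a non-issue, since $t\mapsto|t|^p$ is $\mathrm{C}^1$ for $p>1$. The real obstruction is the principal term: to write $\nabla(x\cdot\nabla u)$ and to perform the integration by parts giving $\int |\nabla u|^{p-2}\nabla u\cdot(x\cdot\nabla)\nabla u\,\dx=\tfrac{1}{p}\int x\cdot\nabla(|\nabla u|^p)\,\dx$ you need second derivatives of $u$, and a weak solution of a $p$-Laplace equation is in general only $\mathrm{C}^{1,\alpha}_{loc}$ --- it fails to be $\mathrm{C}^2$ precisely on the degenerate set $\{\nabla u=0\}$. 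The paper's proof is organized around exactly this point: it first gets $u\in\mathrm{C}^{2,\alpha}_{loc}$ on $\Om_\eta=\{|\nabla u|>\eta\}$ by uniform ellipticity, derives a pointwise divergence identity $\operatorname{div}L_n=K_n$ there, shows that this identity extends distributionally to all of $\R^N$ (the fields involved depend only on $|\nabla u|^{p-2}\nabla u$ and $|\nabla u|^p$, which are continuous and degenerate harmlessly on $\{\nabla u=0\}$), and only then applies a weak divergence theorem and lets $n\to\infty$. Your sketch would need this extra layer (or an equivalent device, e.g. regularizing the operator by $(\eps+|\nabla u|^2)^{(p-2)/2}$) to be complete; the cutoff at infinity and the dominated-convergence passage you describe are fine and match the paper.
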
 
\noi A similar identity has been derived when the problem does not involve critical exponent \cite[Proposition 4.5]{Tertikas}(for $p=2$) and \cite[Theorem 6.1.3]{Anoopthesis} (for general $p$). 

The rest of the paper is organised as follows. In Section \ref{Prelim}, we briefly discuss about the spaces $\H_p(\Om)$ and $\F_p(\Om)$, and recall some important results such as principle of symmetric criticality, a strong maximum principle. In Section \ref{CONCPCT}, we derive a $H$-depended concentration-compactness lemma. Section \ref{thmproof} is devoted to the discussions on subcritical potentials and it includes  our proofs for Theorem \ref{mainthm} and Theorem \ref{perfect}. In Section \ref{Cric_poten},  Theorem \ref{symmetricsol} and Theorem \ref{atorigin} are proved. We prove Theorem \ref{ness} in Section \ref{necessity}.

\section{Preliminaries} \label{Prelim}
In this section, we briefly discuss  many essential results that are required for the development of this article.

\subsection{Principle of symmetric criticality}
Let $(V,\|.\|_V)$ be a real Banach space and $BL(V,V)$ be the space of all bounded linear operator on $V$. Let $H$ be a group. A representation $\pi: H \mapsto BL(V,V)$ is a map that satisfies the following:
\begin{enumerate}[(i)]
    \item $\pi(e)v=v$; $\forall v \in V$, where $e$ is the identity element in $H$.
    \item $\pi(h_1h_2)v= \pi(h_1) \pi(h_2)v$, $\forall h_1,h_2 \in H$, $v \in V.$
\end{enumerate}
The action of $H$ on $V$ is said to be isometric if $\|\pi(h)v\|_V=\|v\|_V$, $\forall h \in H$, $v \in V$.
The subspace $V^H:=\{v \in V: \pi(h)v=v, \forall \,h \in H\}$ is called the $H$-invariant subspace of $V$. A functional $J:V\mapsto \R$ is called $H$-invariant if $J(\pi(h)v)=J(v), \forall \,h \in H, \forall \,v \in V $. A differentiable, $H$-invariant functional $J:V\mapsto \R$ is said to satisfy {\it the principle of symmetric criticality} if
$$ {\bf{(P)}}  \ (J|_{V^H})'(v)=0 \ \mbox{implies} \ J'(v)=0  \ \mbox{and} \ v \in V^H.  $$
In 1979, Palais \cite{Palais} has introduced the notion of principle of symmetric criticality. Since then many versions of this principle were proved e.g., \cite[Theorem 2.2]{KOBAYASHI}, \cite[Theorem 2.7]{KOBAYASHI}, \cite[Theorem 2.1]{article}. In this article, we use the following version due to Kobayashi and Ôtani \cite[Theorem 2.2]{KOBAYASHI}.

\begin{theorem}[Principle of symmetric criticality] \label{PSC}
\cite[Theorem 2.2]{KOBAYASHI} Let $V$ be reflexive and strictly convex and the action of the $H$ on $V$ is isometric. If $J$ is a $H$-invariant, ${\rm{C}}^1$ functional, then 
  {\bf{(P)}} holds. 
\end{theorem}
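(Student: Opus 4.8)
The plan is to follow the strategy of Kobayashi--Ôtani, replacing the orthogonal projection onto $V^H$ available in Palais's Hilbert-space argument by the normalised duality map, whose good behaviour is exactly what the reflexivity and strict convexity hypotheses guarantee. Throughout I write $\langle \cdot,\cdot\rangle$ for the pairing between $V^*$ and $V$. First I would note that $V^H=\bigcap_{h\in H}\ker(\pi(h)-\mathrm{Id})$ is a closed linear subspace of $V$, being an intersection of kernels of bounded operators. Fix $v\in V^H$ with $(J|_{V^H})'(v)=0$ and set $f:=J'(v)\in V^*$; since $V^H$ is a linear subspace, its tangent space at $v$ is $V^H$ itself, so the hypothesis reads $\langle f,w\rangle=0$ for every $w\in V^H$, i.e. $f$ annihilates $V^H$. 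The goal is then to upgrade this to $f=0$ on all of $V$.

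The first key step is to show that $f$ is $H$-invariant in the dual sense, namely $\langle f,\pi(h)z\rangle=\langle f,z\rangle$ for all $h\in H$ and $z\in V$. This I would obtain by differentiating the invariance identity $J(\pi(h)w)=J(w)$: since each $\pi(h)$ is a bounded linear operator, the chain rule (legitimised by the $\mathrm{C}^1$ hypothesis) gives $\langle J'(\pi(h)w),\pi(h)z\rangle=\langle J'(w),z\rangle$ for all $z$, and evaluating at $w=v$ together with $\pi(h)v=v$ yields the claim.

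The heart of the argument is the following lemma: if $f\in V^*$ is $H$-invariant in the dual sense and $f|_{V^H}=0$, then $f=0$. To prove it I would invoke the duality map $\Lambda:V^*\to V$, characterised by $\langle f,\Lambda f\rangle=\|f\|^2=\|\Lambda f\|^2$. Reflexivity guarantees that $\Lambda f$ exists, since by James's theorem every functional attains its norm on the unit ball, while strict convexity of $V$ guarantees uniqueness: if $u_1,u_2$ both realise the defining pairing, then their midpoint does as well, forcing $\|(u_1+u_2)/2\|=\|u_1\|=\|u_2\|=\|f\|$ and hence $u_1=u_2$ by strict convexity. Writing $u:=\Lambda f$, I would then show $u\in V^H$: for each $h$ the isometry property gives $\|\pi(h)u\|=\|u\|=\|f\|$, while dual invariance gives $\langle f,\pi(h)u\rangle=\langle f,u\rangle=\|f\|^2$, so $\pi(h)u$ also realises the defining pairing of $\Lambda f$; uniqueness then forces $\pi(h)u=u$ for all $h$, i.e. $u\in V^H$. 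Finally, since $f$ annihilates $V^H$ and $u\in V^H$, I conclude $\|f\|^2=\langle f,u\rangle=0$, whence $f=0$.

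Combining the three steps, $f=J'(v)$ is $H$-invariant and annihilates $V^H$, so the lemma gives $J'(v)=0$, and $v\in V^H$ by construction, which is precisely $\mathbf{(P)}$. The hard part will be the lemma: one must manufacture a substitute for the orthogonal projection onto $V^H$, and the duality map serves this role exactly because reflexivity secures the existence of $\Lambda f$ and strict convexity secures its uniqueness, after which the invariance $u=\Lambda f\in V^H$ falls out of the isometry of the action. A secondary technical point to check carefully is the differentiation identity of the second step, where the chain rule must be applied through the linear maps $\pi(h)$ using only the $\mathrm{C}^1$ regularity of $J$.
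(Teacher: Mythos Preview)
Your proof is correct and follows precisely the Kobayashi--\^Otani argument from the cited reference. Note, however, that the paper itself does not prove this theorem: it is stated as Theorem~\ref{PSC} with a direct citation to \cite[Theorem 2.2]{KOBAYASHI} and no accompanying proof, so there is no ``paper's own proof'' to compare against. Your write-up is a faithful reconstruction of the source argument, with the duality map serving as the Banach-space substitute for orthogonal projection, and all the delicate points (existence via reflexivity, uniqueness via strict convexity, $H$-invariance of $\Lambda f$ via the isometry hypothesis) are handled correctly.
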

\begin{rmk} \rm
Let $\Om$ be a domain in $\R^N$. Consider $V=\Dp$ and $H$ be a closed subgroup of $\mathcal{O}(N)$ which acts on $V$ as $\pi(h)v = v_h$, where $v_h(x):=v(h^{-1}(x))$, $\forall x \in \Om$. One can verify the following:
\begin{enumerate} [(a)]
    \item $\Dp$ is reflexive and strictly convex,
    \item the action of $H$ on $V$ is isometric.
\end{enumerate}
Furthermore,  for $g\in \mathcal{H}_p(\Om)$, the functional $J_{g,\mu}$ is $\text{C}^1$.
Thus, by Theorem \ref{PSC}, {\bf{(P)}} holds for $J_{g,\mu}$. 
\end{rmk}

\subsection{The space of signed measures} \label{spacemeasure}
We denote the collection of all Borel sets in $\R^N$ by $\mathbb{B}(\R^N).$  Let $\mathbb{M} (\R^N)$ be the space of all regular, finite,  Borel signed-measures on $\R^N$. It is well known that $\mathbb{M} (\R^N)$ is a Banach space with respect to the norm $\norm{\nu}=|\nu|(\R^N)$ (total variation of $\nu$). By the Reisz Representation theorem \cite[Theorem 14.14, Chapter 14]{Border}, $\mathbb{M}(\R^N)$ is the dual of $\text{C}_0(\R^N) := \overline{\text{C}_c(\R^N)}$ in $L^{\infty}(\R^N)$.  A  sequence $(\nu_n)$ is said to be weak* convergent to $\nu$ in $\mathbb{M}(\R^N)$, if
 \begin{eqnarray*}
  \int_{\R^N} \phi \ d\nu_n \ra  \int_{\R^N} \phi \ d\nu, \ as \ n \ra \infty,  \forall\,  \phi \in \text{C}_0(\R^N)\,.
 \end{eqnarray*}
 In this case we write $\nu_n \wrastar \nu$. 
 
 The following result is a consequence of the Banach-Alaoglu theorem \cite[Chapter 5, Section 3]{Conway} which states that for any normed linear space $X$, the closed unit ball in  $X^*$ is weak* compact.
\begin{proposition} \label{BanachAlaoglu}
 Let $(\nu_n)$ be a bounded sequence in $\mathbb{M}(\R^N)$, then there exists $\nu \in \mathbb{M}(\R^N)$ such that $\nu_n \overset{\ast}{\rightharpoonup} \nu$ up to a subsequence.
\end{proposition}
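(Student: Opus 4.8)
The plan is to deduce this from the Banach--Alaoglu theorem together with the separability of the predual $\text{C}_0(\RN)$. Since $\mathbb{M}(\RN)$ is the dual of $\text{C}_0(\RN)$ by the Riesz representation theorem and the sequence $(\nu_n)$ is bounded, say $\sup_n \norm{\nu_n} =: M < \infty$, all the $\nu_n$ lie in the closed ball $B_M := \{\nu \in \mathbb{M}(\RN) : \norm{\nu} \le M\}$, which is weak* compact by Banach--Alaoglu. The only additional ingredient needed to upgrade compactness to \emph{sequential} compactness is that the weak* topology restricted to $B_M$ is metrizable; this holds precisely because $\text{C}_0(\RN)$ is separable (it is $\text{C}_0$ of a second-countable locally compact space). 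Since a compact metrizable space is sequentially compact, $(\nu_n)$ must admit a weak* convergent subsequence with limit in $B_M \subset \mathbb{M}(\RN)$.

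Rather than invoke metrizability as a black box, I would realize it concretely through the standard diagonal extraction. First I would fix a countable dense subset $\{\phi_k\}_{k \in \N}$ of $\text{C}_0(\RN)$. For each fixed $k$ the scalar sequence $\left( \intRn \phi_k \, d\nu_n \right)_n$ is bounded, because $\left| \intRn \phi_k \, d\nu_n \right| \le \norm{\phi_k}_\infty \, \norm{\nu_n} \le M \norm{\phi_k}_\infty$. Applying the Bolzano--Weierstrass theorem successively and taking a Cantor diagonal subsequence, I would extract a single subsequence $(\nu_{n_j})$ along which $\lim_{j} \intRn \phi_k \, d\nu_{n_j}$ exists for every $k$.

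Next I would promote convergence on the dense family to convergence against every $\phi \in \text{C}_0(\RN)$ by an $\eps/3$ argument. Given $\phi$ and $\eps > 0$, choose $\phi_k$ with $\norm{\phi - \phi_k}_\infty < \eps/(3M)$; then for all $i,j$,
$$ \left| \intRn \phi \, d\nu_{n_i} - \intRn \phi \, d\nu_{n_j} \right| \le \left| \intRn (\phi - \phi_k) \, d\nu_{n_i} \right| + \left| \intRn \phi_k \, d(\nu_{n_i} - \nu_{n_j}) \right| + \left| \intRn (\phi_k - \phi) \, d\nu_{n_j} \right| , $$
where the outer two terms are each at most $\norm{\phi - \phi_k}_\infty \, M < \eps/3$ and the middle term is below $\eps/3$ once $i,j$ are large, by the Cauchy property already established for $\phi_k$. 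Hence $L(\phi) := \lim_j \intRn \phi \, d\nu_{n_j}$ exists for all $\phi$, is linear, and satisfies $\left| L(\phi) \right| \le M \norm{\phi}_\infty$, so it is a bounded linear functional on $\text{C}_0(\RN)$. By the Riesz representation theorem there is a unique $\nu \in \mathbb{M}(\RN)$ with $L(\phi) = \intRn \phi \, d\nu$ for all $\phi$, which is exactly the assertion $\nu_{n_j} \wrastar \nu$.

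The only genuinely delicate point is this promotion step: one must ensure that a single uniform bound $M$ controls the contributions of $\phi - \phi_k$ simultaneously for all indices, which is guaranteed precisely by the norm-boundedness hypothesis on $(\nu_n)$. The separability of $\text{C}_0(\RN)$, which supplies the countable dense family $\{\phi_k\}$, is where the second countability of $\RN$ enters; without such predual separability, weak* sequential compactness of bounded sets may fail in general.
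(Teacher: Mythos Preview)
Your proof is correct and follows the same route as the paper, which simply records the statement as a consequence of the Banach--Alaoglu theorem (citing Conway) without further detail. You actually supply the missing ingredient the paper glosses over: Banach--Alaoglu gives only weak* \emph{compactness} of $B_M$, and you correctly explain (and then prove by diagonal extraction) that \emph{sequential} compactness follows from the separability of the predual $\text{C}_0(\R^N)$.
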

The next proposition follows from the uniqueness part of the Riesz representation theorem \cite[Theorem 14.14, Chapter 14]{Border}.
\begin{proposition} \label{defmeasure}
 Let $\nu \in \mathbb{M}(\R^N)$ be a positive measure. Then for an open $V \subseteq \Om$,
 \[ \nu(V)= \sup \left \{ \int_{\R^N} \phi \ d\nu : 0 \leq \phi \leq 1, \phi \in \rm{C}_c^{\infty}(\R^N) \ with \ Supp(\phi) \subseteq V   \right \} \,,\]
and for any $E \in \mathbb{B}(\R^N)$, $\nu(E):= \inf \big\{ \nu(V) : E \subseteq V \ \mbox{and} \ V \text {is open } \big\}$.
\end{proposition}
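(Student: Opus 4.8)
The plan is to establish the two assertions separately, noting at the outset that the second formula is nothing but the outer regularity of $\nu$. Throughout I write $S(V)$ for the supremum appearing on the right-hand side of the first identity, and I treat $V$ as an arbitrary open subset of $\R^N$ (the restriction $V \subseteq \Om$ plays no role in the argument).

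For the first identity I would prove two inequalities. The inequality $S(V) \le \nu(V)$ is immediate from monotonicity of the integral: any admissible $\phi$ satisfies $0 \le \phi \le \mathbf{1}_V$ because $\text{Supp}(\phi) \subseteq V$, hence $\int_{\R^N} \phi \, d\nu \le \nu(V)$, and taking the supremum over such $\phi$ gives the bound. For the reverse inequality $\nu(V) \le S(V)$ I would invoke the inner regularity of $\nu$: since $\nu$ is a finite regular Borel measure on the $\sigma$-compact (indeed Polish) space $\R^N$, it is inner regular on open sets, so for every $\eps > 0$ there is a compact set $K \subseteq V$ with $\nu(K) > \nu(V) - \eps$. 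I then use a smooth Urysohn lemma: because $K$ is compact, $V$ is open, and $K \subseteq V$, there exists $\phi \in \text{C}_c^{\infty}(\R^N)$ with $0 \le \phi \le 1$, $\phi \equiv 1$ on $K$, and $\text{Supp}(\phi) \subseteq V$ (for instance, mollify the indicator of a closed $\delta$-neighbourhood of $K$ with a mollifier of radius less than $\tfrac13\,\text{dist}(K, V^c)$, so that both the value $1$ on $K$ and compact support inside $V$ are preserved). For this $\phi$ one has $\int_{\R^N} \phi \, d\nu \ge \int_K 1 \, d\nu = \nu(K) > \nu(V) - \eps$, so $S(V) > \nu(V) - \eps$; letting $\eps \to 0$ yields $\nu(V) \le S(V)$. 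Combining the two inequalities gives $\nu(V) = S(V)$.

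For the second identity, the formula $\nu(E) = \inf\{\nu(V) : E \subseteq V,\ V \text{ open}\}$ for every Borel set $E$ is precisely the outer regularity of $\nu$, which holds by hypothesis since $\nu \in \mathbb{M}(\R^N)$ is a regular measure. Equivalently, as indicated in the text, this is the standard reconstruction formula supplied by the Riesz representation theorem, and its uniqueness part guarantees that it is satisfied by the measure $\nu$ representing the functional $\phi \mapsto \int_{\R^N}\phi\,d\nu$ on $\text{C}_0(\R^N)$.

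The main obstacle — and really the only nonroutine point — is securing the smooth cutoff $\phi$: one must pass from a continuous Urysohn function to a $\text{C}_c^{\infty}$ function while simultaneously keeping $\phi \equiv 1$ on $K$ and $\text{Supp}(\phi) \subseteq V$, and one must confirm that finite regular Borel measures on $\R^N$ are inner regular on open sets so that the approximating compact $K$ exists. The unbounded case $V = \R^N$ needs no separate treatment, since inner regularity still furnishes a compact $K$ with $\nu(K)$ arbitrarily close to $\nu(\R^N)$, and the mollified bump is then supported in a bounded neighbourhood of $K$.
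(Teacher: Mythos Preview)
Your argument is correct. The paper itself does not give a proof of this proposition; it merely states that it ``follows from the uniqueness part of the Riesz representation theorem'' and cites \cite[Theorem 14.14, Chapter 14]{Border}. Your proposal supplies the details the paper omits: you prove the first identity directly via inner regularity plus a smooth Urysohn cutoff, and you identify the second identity as outer regularity. This is more explicit than the paper's treatment, and the route you take (inner regularity $\Rightarrow$ compact $K$ $\Rightarrow$ mollified bump) is the standard one. There is nothing to correct.
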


The following result plays an important role in proving  the concentration compactness principle. For a proof we refer to  \cite[Lemma 1.2]{Lions2a}.
\begin{proposition}
  \label{representation}
 Let $\nu, \Gamma$ be two non-negative, bounded measures on $\R^N$ such that
 \begin{equation*}
 \left[\int_{\R^N} |\phi|^q \ d\nu \right]^{\frac{1}{q}} \leq C \left[ \int_{\R^N} |\phi|^p \ d\Gamma \right]^{\frac{1}{p}}, \ \forall \,\phi \in {\rm{C}}_c^{\infty}(\R^N),
 \end{equation*}
 for some $C>0$ and $1 \leq p < q <\infty.$ Then there exist a countable set $\left\{x_j \in \R^N : j \in \mathbb{J} \right\}$ and $ \nu_j \in (0, \infty)$ such that
 $$\nu = \displaystyle \sum_{j \in \mathbb{J}} \nu_j \delta_{x_j}.$$
\end{proposition}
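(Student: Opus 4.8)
The plan is to reduce the functional inequality to a set inequality of the form $\nu(E)^{1/q}\le C\,\Gamma(E)^{1/p}$, deduce absolute continuity $\nu\ll\Gamma$, and then show via a differentiation argument that the Radon--Nikodym density of $\nu$ with respect to $\Gamma$ can be positive only at the atoms of $\Gamma$, of which there are at most countably many since $\Gamma$ is finite.

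First I would upgrade the hypothesis from smooth test functions to indicator functions. Fix a Borel set $E\subseteq\R^N$. By regularity of the finite measures $\nu,\Gamma$ one can find open sets $V\supseteq E$ with $\Gamma(V)$ and $\nu(V)$ arbitrarily close to $\Gamma(E),\nu(E)$, and compactly supported smooth functions $\phi$ with $0\le\phi\le1$, $\phi=1$ on a large compact subset of $E$ and $\mathrm{Supp}(\phi)\subseteq V$ (using Proposition \ref{defmeasure} to control the measures of $V$). Letting $\phi$ approximate $\chi_E$ from inside and $V$ shrink to $E$, the stated inequality passes to the limit and yields
\begin{equation*}
\nu(E)^{\frac1q}\le C\,\Gamma(E)^{\frac1p},\qquad\text{i.e.}\qquad \nu(E)\le C^{q}\,\Gamma(E)^{\frac{q}{p}},
\end{equation*}
for every Borel set $E$. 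In particular $\Gamma(E)=0$ forces $\nu(E)=0$, so $\nu\ll\Gamma$, and by the Radon--Nikodym theorem there is $f\in L^1(\Gamma)$, $f\ge0$, with $\d\nu=f\,\d\Gamma$.

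Next I would localise. Apply the set inequality with $E=B_r(x)$ to get $\nu(B_r(x))\le C^{q}\,\Gamma(B_r(x))^{q/p}$, hence
\begin{equation*}
\frac{\nu(B_r(x))}{\Gamma(B_r(x))}\le C^{q}\,\Gamma(B_r(x))^{\frac{q}{p}-1}.
\end{equation*}
Since $q>p$ the exponent $\frac{q}{p}-1$ is strictly positive. By the Besicovitch differentiation theorem for Radon measures on $\R^N$, for $\Gamma$-almost every $x$ the left-hand side converges to $f(x)$ as $r\to0$. At any point $x$ that is not an atom of $\Gamma$ we have $\Gamma(B_r(x))\to0$, so the right-hand side tends to $0$ and therefore $f(x)=0$. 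Thus $f$ vanishes $\Gamma$-almost everywhere off the set $A=\{x:\Gamma(\{x\})>0\}$ of atoms of $\Gamma$. Because $\Gamma$ is finite, $A$ is at most countable, say $A=\{x_j:j\in\mathbb{J}\}$.

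Finally I would assemble the conclusion: since $\d\nu=f\,\d\Gamma$ and $f=0$ $\Gamma$-a.e.\ outside $A$, the measure $\nu$ is concentrated on the countable set $A$, whence $\nu=\sum_{j\in\mathbb{J}}\nu_j\,\delta_{x_j}$ with $\nu_j:=\nu(\{x_j\})$, and we discard those $j$ with $\nu_j=0$ so that $\nu_j\in(0,\infty)$. The main obstacle is the differentiation step: one needs the differentiation theorem in the form valid for an arbitrary (possibly non-doubling) finite Radon measure $\Gamma$ on $\R^N$, which is precisely the Besicovitch--Lebesgue differentiation theorem and relies on the Besicovitch covering lemma; the passage from smooth test functions to indicators in the first step is routine but must be done carefully so that the constant $C$ is preserved in the limit.
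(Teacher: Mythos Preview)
Your argument is correct. The paper does not actually prove this proposition; it simply cites \cite[Lemma 1.2]{Lions2a} for the proof, so there is no in-paper argument to compare against. What you have written is precisely the standard proof of Lions' lemma: pass from test functions to Borel sets to get $\nu(E)\le C^{q}\Gamma(E)^{q/p}$, deduce $\nu\ll\Gamma$, then use Besicovitch--Lebesgue differentiation (the paper itself invokes this in the form \cite[pp.~152--168]{Federer}, as in the proof of Proposition \ref{ConCpct:1}(iii)) to conclude that the Radon--Nikodym density vanishes off the countable set of atoms of $\Gamma$. One small point worth making explicit in Step~1: for open $V$ and compact $K\subset V$, choose $\phi\in\mathrm{C}_c^\infty(V)$ with $0\le\phi\le1$ and $\phi\equiv1$ on $K$, so that $\nu(K)^{1/q}\le\bigl(\int\phi^q\,\d\nu\bigr)^{1/q}\le C\bigl(\int\phi^p\,\d\Gamma\bigr)^{1/p}\le C\,\Gamma(V)^{1/p}$; then inner regularity in $K$ and outer regularity in $V$ give the set inequality with the same constant $C$, as you indicated.
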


\begin{definition}
A measure $\Upsilon \in \mathcal{M}(\R^N)$ is said to be concentrated on a Borel set $F$ if 
$$\Upsilon(E)=\Upsilon(E\cap F), \ \forall \,\, E \in \mathbb{B}(\R^N) \,.$$
\end{definition}
\noi If $\Upsilon $ is concentrated on $F$, then one can observe that  $\Upsilon  (F) = \norm{\Upsilon}.$

\noi For any measure $\Upsilon \in \mathbb{M}(\R^N)$ and a $E \in \mathbb{B}(\R^N)$, we denote the restriction of $\Upsilon$ on $E$ as $\Upsilon_E.$ Observe that $\Upsilon_E$ is concentrated on $E$.

\subsection{Brezis-Lieb lemma}
The next lemma is due to Brezis and Lieb (see Theorem 1 of \cite{Brezis-Lieb}). 
\begin{lemma}\label{Bresiz-Lieb}
 Let $(\Om, \A, \mu)$ be a measure space and $(f_n)$ be a sequence of complex
 -valued measurable functions which are uniformly bounded in $L^p(\Om, \mu)$ for some $0<p< \infty$. Moreover, if $(f_n)$ converges to $f$ a.e., then
\[\lim_{n \ra \infty} \left| \norm{f_n}_{(p, \mu)} - \norm{f_n-f}_{(p, \mu)} \right| = \norm{f}_{(p, \mu)}.\]
\end{lemma}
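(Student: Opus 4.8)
The plan is to follow the original argument of Brézis and Lieb, reducing the claim to an $L^1(\mu)$-convergence statement via a truncation trick. Throughout I read $\norm{f}_{(p,\mu)}$ as the functional $\int_{\Om}|f|^p\,\d\mu$, so that the assertion becomes $\lim_{n\ra\infty}\big|A_n-B_n\big|=C$, where $A_n:=\int_{\Om}|f_n|^p\,\d\mu$, $B_n:=\int_{\Om}|f_n-f|^p\,\d\mu$ and $C:=\int_{\Om}|f|^p\,\d\mu\ge 0$. Since
\[
\big|A_n-B_n-C\big|\ \le\ I_n:=\int_{\Om}\Big|\,|f_n|^p-|f_n-f|^p-|f|^p\,\Big|\,\d\mu,
\]
it suffices to prove the stronger statement $I_n\ra 0$; then $A_n-B_n\ra C$, and continuity of $|\cdot|$ together with $C\ge 0$ gives $|A_n-B_n|\ra C=\norm{f}_{(p,\mu)}$.

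First I would record the pointwise ingredient: for every $\varepsilon>0$ there is $C_\varepsilon>0$ such that for all $a,b\in\mathbb{C}$,
\[
\Big|\,|a+b|^p-|a|^p\,\Big|\ \le\ \varepsilon\,|a|^p+C_\varepsilon\,|b|^p.
\]
For $0<p\le 1$ this is immediate from subadditivity $|x+y|^p\le|x|^p+|y|^p$ (one even gets the bound $|b|^p$ with $\varepsilon=0$). For $p>1$ I would estimate the left-hand side along the segment joining $a$ to $a+b$ by $p\,|b|(|a|+|b|)^{p-1}$, split $(|a|+|b|)^{p-1}\le C\big(|a|^{p-1}+|b|^{p-1}\big)$, and absorb the cross term $|b|\,|a|^{p-1}$ into $\varepsilon|a|^p+C_\varepsilon|b|^p$ by Young's inequality with exponents $p$ and $p/(p-1)$.

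Next I would set $g_n:=f_n-f$, so that $g_n\ra 0$ a.e.; Fatou's lemma applied to $f_n\ra f$ a.e. gives $f\in L^p(\mu)$, whence $(g_n)$ is bounded in $L^p(\mu)$, say $M:=\sup_n\int_{\Om}|g_n|^p\,\d\mu<\infty$. Applying the pointwise inequality with $a=g_n$, $b=f$ yields $\big|\,|f_n|^p-|g_n|^p-|f|^p\,\big|\le\varepsilon|g_n|^p+(C_\varepsilon+1)|f|^p$. The crux of the argument is to introduce the truncated functions
\[
W_n^\varepsilon:=\Big(\ \big|\,|f_n|^p-|g_n|^p-|f|^p\,\big|-\varepsilon\,|g_n|^p\ \Big)^{+}.
\]
By the previous bound $0\le W_n^\varepsilon\le(C_\varepsilon+1)|f|^p$, a fixed $L^1(\mu)$ majorant independent of $n$; and since $g_n\ra 0$ a.e. forces $|f_n|^p-|g_n|^p-|f|^p\ra 0$ a.e., we have $W_n^\varepsilon\ra 0$ a.e. The dominated convergence theorem then gives $\int_{\Om}W_n^\varepsilon\,\d\mu\ra 0$.

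Finally I would assemble the estimate: from $\big|\,|f_n|^p-|g_n|^p-|f|^p\,\big|\le W_n^\varepsilon+\varepsilon|g_n|^p$ we get
\[
I_n\ \le\ \int_{\Om}W_n^\varepsilon\,\d\mu+\varepsilon\int_{\Om}|g_n|^p\,\d\mu\ \le\ \int_{\Om}W_n^\varepsilon\,\d\mu+\varepsilon M,
\]
so $\limsup_{n\ra\infty} I_n\le\varepsilon M$, and since $\varepsilon>0$ is arbitrary, $I_n\ra 0$. The delicate point—the main obstacle—is precisely the truncation step: one cannot dominate $\big|\,|f_n|^p-|g_n|^p-|f|^p\,\big|$ itself by a single integrable function (the term $\varepsilon|g_n|^p$ is not uniformly small in $n$), so subtracting $\varepsilon|g_n|^p$ inside the positive part is exactly what manufactures an $n$-independent $L^1$ majorant and lets dominated convergence apply, while the residual $\varepsilon|g_n|^p$ is controlled uniformly by the $L^p$-boundedness $M$ and then killed by letting $\varepsilon\ra 0$.
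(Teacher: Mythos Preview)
Your argument is correct and is exactly the standard Brezis--Lieb proof: the key pointwise inequality you use is recorded in the paper as \eqref{inequality}, and the truncation $W_n^\varepsilon$ with dominated convergence is the original trick from \cite{Brezis-Lieb}. The paper itself does not supply a proof of this lemma but merely cites \cite[Theorem~1]{Brezis-Lieb}, so there is nothing to compare against; your write-up faithfully reproduces that reference.
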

\noi We also  require the following inequality \cite[Lemma I.3]{Lions2a} that played an important role in the proof of Bresiz-Lieb lemma: for $a,b \in \R^N$,
\begin{equation} \label{inequality}
\left||a+b|^p-|a|^p \right|\leq \epsilon |a|^p + C(\epsilon,p)|b|^p 
\end{equation}
valid for each $\epsilon >0$ and  $0< p< \infty$.

\subsection{A strong maximum principle}
We use a version of strong maximum principle that holds for $p$-Laplacian for showing the positivity of the solution of \eqref{Critical}, see \cite[Proposition 3.2]{Kawohl}.
\begin{lemma} \label{strongmax}
 Let $u \in \Dp$ and $g \in L^1_{loc}(\Om)$ be two non-negative functions on $\Om$ such that $g u^{p-1} \in L^1_{loc}(\Om)$ and $u$ satisfies the following differential inequality (in the sense of distributions)
 $$-\De_p u + gu^{p-1} \geq 0 \ \ \mbox{in} \ \ \Om.$$
 Then either $u \equiv 0$ or $u>0$ in $\Om.$
\end{lemma}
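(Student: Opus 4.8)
The plan is to reduce the dichotomy to a topological argument: I will show that the set $Z := \{x \in \Om : u(x) = 0\}$ is both closed and open in $\Om$; since $\Om$ is a domain (hence connected), this forces $Z = \emptyset$ or $Z = \Om$, which is precisely the assertion that either $u > 0$ throughout $\Om$ or $u \equiv 0$. To make pointwise statements meaningful I would first replace $u$ by its lower semicontinuous representative. Because $u \in \Dp \subset W^{1,p}_{loc}(\Om)$ is a nonnegative weak supersolution of a $p$-Laplacian-type equation, De Giorgi--Nash--Moser regularity theory (in the form developed for quasilinear equations by Serrin and Trudinger, and in nonlinear potential theory by Heinonen--Kilpel\"ainen--Martio) provides such a representative satisfying $u(x) = \operatorname*{ess\,lim\,inf}_{y \to x} u(y)$; with this choice $Z$ is automatically closed and, on each ball, the infimum of $u$ is attained.

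The core of the argument is the openness of $Z$, and here I would invoke the weak Harnack inequality for nonnegative supersolutions. Writing the hypothesis in weak form, for every $0 \le \vph \in \cc(\Om)$ one has $\int_\Om |\nabla u|^{p-2}\nabla u \cdot \nabla \vph \,\dx \ge -\int_\Om g\, u^{p-1}\vph\,\dx$, so that $u$ is a nonnegative weak supersolution of $-\De_p u + g\,u^{p-1} = 0$. The decisive structural feature is that the zeroth-order term $g\,u^{p-1}$ is homogeneous of degree $p-1$ in $u$, exactly matching the homogeneity of the principal part. Consequently a Moser/De Giorgi iteration --- testing the inequality against negative powers of $u$ and against truncations, and using the Sobolev inequality to close the iteration --- yields, for some $q_0 = q_0(N,p) > 0$ and every ball with $B_{2r}(x_0) \cset \Om$, an estimate of the form
\[
\Big(\frac{1}{|B_r(x_0)|}\int_{B_r(x_0)} u^{q_0}\,\dx\Big)^{1/q_0} \le C \inf_{B_r(x_0)} u,
\]
with no additive constant, precisely because the coupling is homogeneous. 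If $x_0 \in Z$, then the attained infimum on the right vanishes, forcing $u \equiv 0$ on $B_r(x_0)$; hence $Z$ is open, and the proof is complete.

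The step I expect to be the main obstacle is the rigorous control of the lower-order term in the iteration under the weak hypothesis $g \in L^1_{loc}(\Om)$ (rather than $g$ in some $L^s_{loc}$ with $s > N/p$). The nonnegativity of $g\,u^{p-1}$ together with the assumption $g\,u^{p-1} \in L^1_{loc}(\Om)$ are what make this possible: the term can be absorbed into the energy estimates at each stage because it scales like the principal part, so that no subcritical integrability of $g$ is needed. An alternative, and in some ways more robust, route avoids integrability of $g$ altogether and uses only its sign: following V\'azquez's comparison method, one supposes $u(x_0) = 0$ while $u \not\equiv 0$, places an interior ball $B \subset \{u > 0\}$ whose closure meets $Z$ at a single point, and compares $u$ on a surrounding annulus against an explicit small radial subsolution that vanishes to the correct order; the weak comparison principle for $\De_p$ then forces $u > 0$ at the contact point, a contradiction. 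In either approach the only genuinely delicate point is reconciling the distributional formulation and the merely local integrability of $g$ with the pointwise conclusion, and it is exactly here that the sign condition on $g$ and the identity between the homogeneity of $g\,u^{p-1}$ and that of $\De_p u$ are indispensable.
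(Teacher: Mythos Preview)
The paper does not supply its own proof of this lemma: it is quoted in the preliminaries (Section~2.4) with a direct citation to \cite[Proposition~3.2]{Kawohl} and no argument is given, so there is no in-paper proof to compare your proposal against.

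That said, your outline is the standard route and is essentially what one finds in the cited reference: pass to the lower semicontinuous representative, reduce by connectedness to showing that the zero set is open, and derive openness from a weak Harnack inequality for nonnegative supersolutions. You have also correctly isolated the only genuinely delicate point, namely that $g$ is assumed merely in $L^1_{loc}$, so off-the-shelf Harnack estimates (which typically require the zeroth-order coefficient in $L^s_{loc}$ with $s>N/p$, or in a Kato-type class) do not apply directly. Your heuristic that the absorption term ``scales like the principal part'' is the right intuition; concretely, testing with $\eta^p(u+\varepsilon)^{1-p}$ produces a lower-order contribution bounded by $\int g\,\eta^p$, which is finite since $g\in L^1_{loc}$, and this logarithmic Caccioppoli estimate is what drives the Bombieri--Giusti/Moser scheme without an additive remainder. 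The V\'azquez-type comparison you sketch is an equally valid alternative that needs only the sign $g\ge 0$ and the $(p-1)$-homogeneity of $g u^{p-1}$. Either approach is sound; the paper itself simply defers to the literature.
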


\subsection{The spaces $\mathcal{H}_p(\Omega)$ and $\mathcal{F}_p(\Omega)$} \label{spaces}
We briefly recall the spaces $\mathcal{H}_p(\Om)$ and $\mathcal{F}_p(\Om)$ that have been introduced in \cite{New}. The space of Hardy potentials $\mathcal{H}_p(\Om)$ is defined as the set of all $g \in L^1_{loc}(\Om)$ such that the following Hardy type inequality holds:
$$\int_{\Omega} |g| |u|^p \dx \leq C  \int_{\Omega} |\nabla u|^p \dx, \forall\; u \in \Dp,$$
for some $C>0$. In \cite{New}, using Mazya's p-capacity, we provide a Banach function space structure on $\mathcal{H}_p(\Om)$. Recall that for $ F \cset \Omega,$ the $p$-capacity of $F$ relative to $\Om$ is defined as,
  \[{\text{Cap}_p(F,\Om)}  = \inf \left\{ \displaystyle \int_{\Omega} | \nabla u |^p \dx: u \in  \mathcal{N}_p (F) \right \},\]
  where $ \mathcal{N}_p (F)= \{ u \in \Dp : u \geq 1 \ \mbox{in a neighbourhood of}\; F \}$.
We define the Banach function space norm on $\mathcal{H}_p(\Om)$ as follows,
 \begin{eqnarray*}
  \norm{g}_{\mathcal{H}_p}= \sup\left\{ \frac{\int_{F} |g|\dx}{\text{Cap}_p(F,\Om)}:F \cset \Om; |F|\ne 0 \right\}.                           
   \end{eqnarray*} 
   
Now we define $\mathcal{F}_p(\Om) =  \overline{{\rm{C}}_c^{\infty}(\Om)} \text{ in } \ \mathcal{H}_p(\Om)$. The following result is proved in \cite[Theorem 1]{New}.
\begin{proposition} \label{pastthm}
 Let $g\in \mathcal{H}_p(\Om)$. Then  $G_p:\Dp \ra \R$ is compact if and only if $g \in \mathcal{F}_p(\Om)$. 
\end{proposition}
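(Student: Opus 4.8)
The plan is to prove the two implications separately, relying on two facts from the Banach function space framework of \cite{New}. First, the capacitary norm controls the Hardy functional: there is a constant $C=C(p)$ with
$$\int_\Om |h| |u|^p \dx \le C \|h\|_{\mathcal{H}_p} \|u\|_{\D_p}^p, \quad \forall\, u \in \Dp,\ h \in \mathcal{H}_p(\Om),$$
which is precisely Maz'ya's equivalence between the condenser condition defining $\|\cdot\|_{\mathcal{H}_p}$ and the validity of the Hardy inequality, with comparable constants. Second, for $\phi \in \text{C}_c^\infty(\Om)$ the functional $u \mapsto \int_\Om |\phi| |u|^p \dx$ is compact: $\phi$ is bounded with compact support $K$, and $u_n \wra u$ in $\Dp$ forces $u_n \ra u$ strongly in $L^p(K)$ by the Rellich--Kondrachov theorem.

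For the sufficiency ($g \in \mathcal{F}_p(\Om) \Rightarrow G_p$ compact) I would run an $\eps/3$ argument. Given $u_n \wra u$ in $\Dp$, the sequence is bounded, say $\|u_n\|_{\D_p} \le M$. Choose $\phi_k \in \text{C}_c^\infty(\Om)$ with $\|g - \phi_k\|_{\mathcal{H}_p} \to 0$, and set $G_p^{(k)}(v) = \int_\Om |\phi_k||v|^p \dx$. Using $\big||g|-|\phi_k|\big| \le |g-\phi_k|$ together with the first fact,
$$|G_p(u_n) - G_p^{(k)}(u_n)| \le C \|g-\phi_k\|_{\mathcal{H}_p} M^p,$$
uniformly in $n$, and the same bound holds with $u_n$ replaced by $u$. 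Fixing $k$ large makes these two tail terms at most $\eps/3$, while $|G_p^{(k)}(u_n) - G_p^{(k)}(u)| \to 0$ by the second fact. Hence $G_p(u_n) \to G_p(u)$.

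For the necessity I would argue by contraposition, constructing a weakly null sequence on which $G_p$ does not vanish. Non-membership gives $\delta > 0$ with $\|g - \phi\|_{\mathcal{H}_p} \ge 2\delta$ for every $\phi \in \text{C}_c^\infty(\Om)$. Taking $\phi_n$ to be mollified truncations of $g$ supported in an exhausting sequence $K_n \uparrow \Om$, the definition of the norm as a supremum over condensers yields compact sets $F_n \Subset \Om$ with $|F_n| \ne 0$ and $\int_{F_n} |g - \phi_n| \ge \delta\, \text{Cap}_p(F_n,\Om)$; since $\phi_n$ already captures the part of $g$ living well inside $K_n$, these $F_n$ must escape, i.e.\ drift to $\partial\Om$ or to infinity, or concentrate onto a set of zero $p$-capacity. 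Choosing near-optimal condenser functions $v_n \in \mathcal{N}_p(F_n)$ with $\int_\Om |\nabla v_n|^p \dx \le 2\,\text{Cap}_p(F_n,\Om)$ and normalizing $u_n = v_n / \|v_n\|_{\D_p}$, the $p$-homogeneity of $G_p$ and $v_n \ge 1$ on $F_n$ give
$$G_p(u_n) \ge \frac{\int_{F_n} |g| \dx}{2\,\text{Cap}_p(F_n,\Om)} \ge \frac{\delta}{2} - \frac{\int_{F_n}|\phi_n|\dx}{2\,\text{Cap}_p(F_n,\Om)}.$$
The escape of $F_n$ forces the last quotient to $0$, so $\liminf_n G_p(u_n) \ge \delta/2$. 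On the other hand the same escape forces $u_n \wra 0$ in $\Dp$ (after passing to a subsequence), so compactness of $G_p$ would give $G_p(u_n) \to G_p(0) = 0$, a contradiction. Hence $G_p$ compact implies $g \in \mathcal{F}_p(\Om)$.

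The main obstacle is the necessity construction: making precise the dichotomy that the $\mathcal{H}_p$-mass of $g$ not captured by any $\text{C}_c^\infty$ function must sit on condensers $F_n$ that genuinely escape, and simultaneously verifying (i) that $\int_{F_n}|\phi_n|\dx$ is negligible compared with $\text{Cap}_p(F_n,\Om)$ and (ii) that the normalized condenser potentials $u_n$ converge weakly to $0$ rather than to a nonzero limit. Both (i) and (ii) hinge on localizing the capacitary norm and on the fact that condenser potentials of sets shrinking to a $p$-null set, or drifting off to the boundary or to infinity, are weakly null in $\Dp$; this is where the structural results on $\mathcal{H}_p(\Om)$ and $\mathcal{F}_p(\Om)$ from \cite{New} carry the real weight.
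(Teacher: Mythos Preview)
The paper does not actually prove this proposition: it is stated with the preamble ``The following result is proved in \cite[Theorem~1]{New}'' and no argument is given in the present paper. So there is no in-paper proof to compare your proposal against; you are attempting to reconstruct the argument from the cited reference.

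Your sufficiency direction ($g\in\mathcal{F}_p(\Om)\Rightarrow G_p$ compact) is correct and standard: the Maz'ya equivalence gives the uniform operator-norm bound $|G_p(v)-G_p^{(k)}(v)|\le C\|g-\phi_k\|_{\mathcal{H}_p}\|v\|_{\D_p}^p$, and Rellich--Kondrachov handles each $G_p^{(k)}$, so the $\eps/3$ argument goes through cleanly.

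The necessity direction, however, is only a heuristic, and you correctly identify it as the obstacle. The specific gaps are: (a) the assertion that the near-optimal condensers $F_n$ ``must escape'' because $\phi_n$ captures $g$ on $K_n$ is unjustified --- mollified truncations of $g$ need not approximate $g$ in $\mathcal{H}_p$-norm on compact subsets (think of $g=|x|^{-p}$ near an interior point), so the mass witnessing $\|g-\phi_n\|_{\mathcal{H}_p}\ge\delta$ could sit stubbornly at a fixed interior singularity rather than drifting; (b) even granting escape or concentration, the claim that the normalized condenser potentials $u_n$ converge weakly to $0$ requires a genuine argument (it is false in general that condenser potentials of sets shrinking to a point are weakly null without further information on the capacities); and (c) the quotient $\int_{F_n}|\phi_n|\,\dx/\text{Cap}_p(F_n,\Om)$ going to zero is asserted but not shown. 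The actual proof in \cite{New} organizes the necessity direction differently, using structural properties of the Banach function space $\mathcal{H}_p(\Om)$ (in particular a characterization of $\mathcal{F}_p(\Om)$ via absolute continuity of the $\mathcal{H}_p$-norm with respect to $p$-capacity) rather than an ad hoc condenser construction; that is where the real content lies, and your sketch does not reproduce it.
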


\section{A variant of concentration compactness principle} \label{CONCPCT} 

In this section, we derive a variant of the concentration compactness principle that we use extensively in this article.

{\bf{Notations.}} We will be following the notations below throughout this article.
\begin{itemize}
    \item For each $R>0$, we fix a function $\Phi_R \in \text{C}_b^{1}(\R^N)$ (bounded ${\rm{C}}^1$ functions on $\R^N$) satisfying  $0\leq \Phi_R \leq 1$, $\Phi_R = 0 $ on $\overline{B_R}$ and $\Phi_R = 1 $ on $B_{R+1}^c$.
    \item For a sequence $(u_n)$ in $\mathcal{D}_p(\R^N)^H$ with  $u_n  \wra u \ \mbox{in} \ \mathcal{D}_p(\R^N)$, each of the following sequences corresponds to a bounded  sequence in  $\mathbb{M}(\R^N)$ and hence  weak* converges to a measure in $\mathbb{M}(\R^N)$. For convenience, we denote the sequences and their limits as given below:  
 \begin{eqnarray*}
\nu_n:=    |u_n - u|^{p^*} &\wrastar& \nu \ \ \mbox{in} \ \ \mathbb{M}(\R^N)  \,, \\
 \Gamma_n:= |\nabla (u_n-u)|^p &\wrastar& \Gamma \ \ \mbox{in} \ \ \mathbb{M}(\R^N) \,, \\
  \gamma_n:= g |u_n-u|^p   &\wrastar& \gamma \ \ \mbox{in} \ \ \mathbb{M}(\R^N) \,, \\
  \widetilde{\Gamma}_n:=|\nabla u_n|^p    &\wrastar& \widetilde{\Gamma} \ \ \mbox{in} \ \ \mathbb{M}(\R^N) \,, \\
 \widetilde{\gamma}_n := g |u_n|^p   &\wrastar& \widetilde{\gamma} \ \ \mbox{in} \ \ \mathbb{M}(\R^N).
 \end{eqnarray*}
The following limits will be denoted by: 
 \begin{eqnarray*}
 \lim_{R \ra \infty} \overline{\lim_{n \ra \infty}} \int_{|x| \geq R} |u_n - u|^{p^*} \dx  &=& \nu_{\infty}  \\
 \lim_{R \ra \infty} \overline{\lim_{n \ra \infty}} \int_{|x| \geq R} |\nabla (u_n-u)|^p \dx &=&  \Gamma_{\infty}  \\
 \lim_{R \ra \infty} \displaystyle \overline{\lim_{n \ra \infty}} \int_{|x| \geq R} g |u_n-u|^p \dx  &=& \gamma_{\infty}  \,.
  \end{eqnarray*}

\end{itemize}

\begin{proposition} \label{equivalent_limit}
The following statements are true:
\begin{enumerate}[(i)]
    \item $ \displaystyle \lim_{R \ra \infty}\overline{\lim_{n\ra \infty}} \int_{\R^N} | u_n|^{p^*} \Phi_R \dx = \nu_{\infty} \,.$
    \item  $\displaystyle \lim_{R \ra \infty} \overline{\lim_{n\ra \infty}} \int_{\R^N} g| u_n|^{p} \Phi_R \dx = \gamma_{\infty} \,.$
    \item $\displaystyle \lim_{R \ra \infty} \overline{\lim_{n\ra \infty}} \int_{\R^N}  |\nabla u_n|^p  \Phi_R \dx= \Gamma_{\infty} \,. $
\end{enumerate}
\end{proposition}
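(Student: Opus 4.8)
The plan is to prove each of the three identities by the same mechanism, so I will describe (i) in detail and indicate how (ii) and (iii) follow by the identical argument with the exponent $p^*$ replaced by $p$ (and with the weight $g$ inserted where needed). The key observation is the Brézis--Lieb splitting: writing $u_n = (u_n - u) + u$ and using inequality \eqref{inequality}, for every $\eps > 0$ we have the pointwise bound
\[
\bigl| |u_n|^{p^*} - |u_n - u|^{p^*} \bigr| \leq \eps |u_n - u|^{p^*} + C(\eps, p^*)|u|^{p^*}.
\]
Multiplying by $\Phi_R$, integrating, and using that $\int_{\R^N} |u_n - u|^{p^*} \Phi_R \dx \leq \int_{|x| \geq R} |u_n - u|^{p^*} \dx$ together with $0 \leq \Phi_R \leq 1$ supported in $B_R^c$, I get
\[
\Bigl| \int_{\R^N} |u_n|^{p^*} \Phi_R \dx - \int_{\R^N} |u_n - u|^{p^*} \Phi_R \dx \Bigr| \leq \eps \int_{|x| \geq R} |u_n - u|^{p^*} \dx + C(\eps, p^*) \int_{|x| \geq R} |u|^{p^*} \dx.
\]

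Next I take $\overline{\lim}_{n \to \infty}$ and then $\lim_{R \to \infty}$ on both sides. The first term on the right is bounded by $\eps \nu_\infty$ after taking the limits, and the second term vanishes as $R \to \infty$ because $u \in L^{p^*}(\R^N)$ (so $\int_{|x| \geq R} |u|^{p^*}\dx \to 0$ by dominated convergence), while $n$ does not appear. Since $\eps > 0$ is arbitrary, this shows
\[
\lim_{R \to \infty} \overline{\lim_{n \to \infty}} \int_{\R^N} |u_n|^{p^*} \Phi_R \dx = \lim_{R \to \infty} \overline{\lim_{n \to \infty}} \int_{\R^N} |u_n - u|^{p^*} \Phi_R \dx,
\]
provided I check that the right-hand limit exists (it does: the inner $\overline{\lim}$ is a decreasing function of $R$ since $\Phi_R$ decreases in $R$, and it is bounded below by $0$, so the outer limit exists). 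It therefore remains to identify the right-hand side with $\nu_\infty$. For this I use the sandwich $\mathbf{1}_{B_{R+1}^c} \leq \Phi_R \leq \mathbf{1}_{B_R^c}$: on one hand $\int_{\R^N} |u_n - u|^{p^*}\Phi_R \dx \leq \int_{|x| \geq R} |u_n - u|^{p^*}\dx$, and on the other hand $\int_{\R^N}|u_n - u|^{p^*}\Phi_R\dx \geq \int_{|x| \geq R+1}|u_n - u|^{p^*}\dx$. Taking $\overline{\lim}_n$ and then $\lim_R$ of both bounds, and noting that replacing $R$ by $R+1$ does not change the value of the double limit defining $\nu_\infty$, squeezes the middle quantity to exactly $\nu_\infty$. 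This proves (i).

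For (ii) and (iii) the argument is verbatim the same: in (iii) replace $|u_n|^{p^*}$ by $|\nabla u_n|^p$, use \eqref{inequality} with exponent $p$ and $a = \nabla(u_n - u)$, $b = \nabla u$, and use $\nabla u \in L^p(\R^N)$ for the tail term; in (ii) replace $|u_n|^{p^*}$ by $g|u_n|^p$, apply \eqref{inequality} to $|u_n|^p$, multiply through by $|g|$, and use that $g \in \mathcal{H}_p(\R^N)$ together with $u \in \mathcal{D}_p(\R^N)$ to guarantee $|g||u|^p \in L^1(\R^N)$, so that its tail $\int_{|x| \geq R}|g||u|^p\dx \to 0$. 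The only mild subtlety — and the step I expect to require the most care — is the handling of signs in (ii): $g$ may change sign, so I should estimate $\bigl|\int g(|u_n|^p - |u_n-u|^p)\Phi_R\bigr|$ by passing the absolute value inside against $|g|$ before applying \eqref{inequality}, and the existence of the relevant double limits there follows from the monotonicity-in-$R$ of $\Phi_R$ exactly as before. No genuinely hard estimate is involved; the proposition is essentially the statement that $\Phi_R$ is an admissible substitute for the sharp cutoff $\mathbf{1}_{B_R^c}$ in the definition of the "mass at infinity," and the Brézis--Lieb inequality \eqref{inequality} is what lets us discard the contribution of the weak limit $u$ near infinity.
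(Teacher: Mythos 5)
Your proof is correct and follows essentially the same route as the paper: both arguments discard the contribution of the weak limit $u$ at infinity via a Br\'ezis--Lieb-type estimate and then identify the $\Phi_R$-integral with $\nu_\infty$ by squeezing $\Phi_R$ between the indicators of $B_{R+1}^c$ and $B_R^c$. The only differences are cosmetic — you apply inequality \eqref{inequality} directly with an arbitrary $\eps$ where the paper invokes Lemma \ref{Bresiz-Lieb} (thereby avoiding any appeal to a.e.\ convergence), and your parenthetical claim that $R\mapsto\Phi_R$ is pointwise decreasing is not actually guaranteed by the paper's definition of $\Phi_R$, but this is harmless since your subsequent squeeze already yields both the existence and the value of the limit.
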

\begin{proof} 
$(i)$
Use Brezis-Lieb lemma to obtain
 \begin{align} \label{nuinfinity2}
 \lim_{n \ra \infty} \left|\int_{|x|\geq R} |u_n-u|^{p^*} \dx - \int_{|x|\geq R} |u_n|^{p^*} \dx \right| = \int_{|x|\geq R} |u|^{p^*} \dx.
  \end{align}
Now, by the sub-additivity property of lisup, we get
\begin{align*} 
  & \,  \left|\overline{\lim_{n \ra \infty}}\int_{|x|\geq R} |u_n|^{p^*} \dx - \overline{\lim_{n \ra \infty}}\int_{|x|\geq R} |u_n-u|^{p^*} \dx \right| \\
  & \,   \quad \leq \overline{\lim_{n \ra \infty}} \left|\int_{|x|\geq R} |u_n-u|^{p^*} \dx - \int_{|x|\geq R} |u_n|^{p^*} \dx \right| \,.
 \end{align*}
Thus, using \eqref{nuinfinity2}, we obtain \begin{align} \label{equality55}
     \lim_{R\ra \infty} \overline{\lim_{n \ra \infty}}\int_{|x|\geq R} |u_n|^{p^*} \dx = \lim_{R\ra \infty} \overline{\lim_{n \ra \infty}}\int_{|x|\geq R} |u_n-u|^{p^*} \dx =\nu_{\infty} \,.
 \end{align}
 Notice that
  \begin{eqnarray*}
  \int_{|x|\geq R+1} |u_n |^{p^*} \dx \leq \int_{\R^N} |u_n|^{p^*} \Phi_R \dx \leq \int_{|x|\geq R} |u_n |^{p^*} \dx.
 \end{eqnarray*}
By taking $n, R \ra \infty$ and using \eqref{equality55} we get $(i)$.\\
$(ii), (iii)$ follows by the similar  set of calculations as given in $(i)$.
\end{proof}

\begin{proposition} \label{ConCpct:1}
Let $H$ be a closed subgroup of $\mathcal{O}(N)$, $g \in \mathcal{H}_p(\R^N)$ be a non-negative $H$-invariant Hardy potential, and $(u_n)$ be a sequence in $\mathcal{D}_p(\R^N)^H$ such that $u_n  \wra u \ \mbox{in} \ \mathcal{D}_p(\R^N)^H$. 
Then the followings hold:
\begin{enumerate}[(i)]
\item  if $\Phi \in {\rm{C}}_b^1(\Om)$ is such that $\nabla \Phi$ has compact support, then
  \begin{align} \label{takingout}
  \lim_{n \ra \infty} \int_{\Om} |\nabla((u_n -  u)\Phi)|^p \dx = \lim_{n \ra \infty} \int_{\Om} |\nabla(u_n -  u)|^p |\Phi|^p \dx.    
  \end{align}
\item there exists a countable set $\mathbb{J}$ such that $\nu =\sum_{j \in \mathbb{J}} \nu_j \delta_{x_j}$, where $\nu_j\in (0,\infty)$, $x_j \in \R^N$. In particular, $\nu$ is supported on the countable set $F_{\mathbb{J}}:=\{x_j \in \R^N: j \in \mathbb{J}\}$,
\item $\gamma$ is supported on $\overline{\sum_g}$ \,.
 \end{enumerate}
\end{proposition}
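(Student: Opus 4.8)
\textbf{Plan of proof for Proposition \ref{ConCpct:1}.}

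\emph{Part (i).} The plan is to expand $\nabla((u_n-u)\Phi) = \Phi\nabla(u_n-u) + (u_n-u)\nabla\Phi$ and apply the elementary inequality \eqref{inequality} pointwise with $a = \Phi\nabla(u_n-u)$ and $b = (u_n-u)\nabla\Phi$: this gives, for every $\eps>0$,
$$\left|\,|\nabla((u_n-u)\Phi)|^p - |\Phi|^p|\nabla(u_n-u)|^p\,\right| \leq \eps |\Phi|^p|\nabla(u_n-u)|^p + C(\eps,p)|u_n-u|^p|\nabla\Phi|^p.$$
Integrating over $\Om$, the first term on the right is bounded by $\eps$ times a quantity uniformly bounded in $n$ (since $(u_n)$ is bounded in $\mathcal{D}_p$ and $\|\Phi\|_\infty<\infty$). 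For the second term, $\nabla\Phi$ has compact support $K$, and since $u_n \wra u$ in $\mathcal{D}_p(\R^N)$, the Rellich--Kondrachov theorem gives $u_n \to u$ strongly in $L^p(K)$, so $\int_\Om |u_n-u|^p|\nabla\Phi|^p\dx \to 0$. Taking $\limsup_{n\to\infty}$ and then letting $\eps\to 0$ yields \eqref{takingout}.

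\emph{Part (ii).} The plan is to invoke the Sobolev inequality $\|\vph\|_{p^*}\leq S^{-1/p}\|\nabla\vph\|_p$ applied to $\vph = (u_n-u)\Phi$ for $\Phi\in\text{C}_c^\infty(\R^N)$, raise it to the power $p$, and pass to the limit $n\to\infty$. On the left, $\int|u_n-u|^{p^*}|\Phi|^{p^*}\dx \to \int|\Phi|^{p^*}\d\nu$ by weak* convergence $\nu_n\wrastar\nu$ (using $|\Phi|^{p^*}\in\text{C}_0(\R^N)$). On the right, $\int|\nabla((u_n-u)\Phi)|^p\dx \to \int|\Phi|^p\d\Gamma$ after using part (i) to replace $|\nabla((u_n-u)\Phi)|^p$ by $|\nabla(u_n-u)|^p|\Phi|^p$ and then the weak* convergence $\Gamma_n\wrastar\Gamma$. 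This produces the reverse Hölder-type inequality
$$\left[\int_{\R^N}|\Phi|^{p^*}\d\nu\right]^{1/p^*} \leq S^{-1/p}\left[\int_{\R^N}|\Phi|^p\d\Gamma\right]^{1/p}, \quad \forall\,\Phi\in\text{C}_c^\infty(\R^N),$$
and a density argument extends it to the form required by Proposition \ref{representation} (with $q=p^*$). That proposition then gives $\nu = \sum_{j\in\mathbb{J}}\nu_j\delta_{x_j}$ with $\nu_j\in(0,\infty)$ and $\mathbb{J}$ countable, which is exactly the assertion; the support statement is immediate.

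\emph{Part (iii).} The plan is to show $\gamma$ is concentrated on $\overline{\Sigma_g}$ by testing against $\phi\in\text{C}_c^\infty(\R^N\setminus\overline{\Sigma_g})$ with $0\le\phi\le 1$ and proving $\int\phi\,\d\gamma = 0$. Fix such a $\phi$ with support $K\Subset \R^N\setminus\overline{\Sigma_g}$. For each $x\in K$ we have $\mu_1(g,x)=\infty$, meaning that for every $M>0$ there is $r_x>0$ with $\int_{B_{r_x}(x)}g|v|^p\dx \le \frac1M\int|\nabla v|^p\dx$ for all $v\in\mathcal{D}_p(\R^N\cap B_{r_x}(x))$; cover $K$ by finitely many such balls and use a partition of unity subordinate to this cover to deduce $\int_K g|v|^p\dx \le \frac{C(K)}{M}\int|\nabla v|^p\dx$ for all $v$, hence (by density and the localization to $K$ via $\phi$) $\int_{\R^N}\phi\,g|w_n-w|^p\dx \le \frac{C(K)}{M}\int|\nabla(w_n-w)|^p\dx$, which is bounded uniformly in $n$. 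Passing to the limit and using that $M$ is arbitrary forces $\int\phi\,\d\gamma=0$. Since such $\phi$ exhaust $\R^N\setminus\overline{\Sigma_g}$, $\gamma$ gives no mass outside $\overline{\Sigma_g}$, i.e. $\gamma$ is supported on $\overline{\Sigma_g}$.

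\textbf{Main obstacle.} The delicate point is Part (iii): converting the pointwise vanishing of the local Hardy constant (i.e. $\mu_1(g,x)=\infty$ off $\Sigma_g$) into a \emph{uniform} Hardy-type estimate on a compact set $K$, with a constant that blows up like $1/M$ but is otherwise independent of the test function. This requires a careful compactness/partition-of-unity argument and attention to the fact that $g^-$ plays no role here (as $g\ge 0$ in this proposition), while for the general (non-sign-definite) setting one would need $g^-\in\mathcal{F}_p(\Om)$ and Proposition \ref{pastthm} to handle the negative part separately. Parts (i) and (ii) are, by contrast, routine adaptations of the classical Lions concentration-compactness argument.
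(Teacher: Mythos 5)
Parts (i) and (ii) of your proposal are correct and essentially the paper's own argument: (i) is the elementary inequality \eqref{inequality} plus Rellich--Kondrachov on the compact support of $\nabla\Phi$, verbatim, and (ii) differs only in that you test against the pure Sobolev constant rather than against $\E_{g,\mu}(\R^N)$; since Proposition \ref{representation} only requires \emph{some} positive constant, this is immaterial.

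Part (iii) contains a genuine gap. The intermediate estimate you claim --- a Hardy inequality $\int_K g|v|^p\,\dx\le \frac{C(K)}{M}\int|\nabla v|^p\,\dx$ valid for all $v$ and all $M>0$ with $C(K)$ independent of $M$ --- cannot hold unless $g=0$ a.e.\ on $K$: letting $M\to\infty$ forces $\int_K g|v|^p\,\dx=0$ for every $v$. For instance $g\equiv 1$ on a bounded domain has $\Sigma_g=\emptyset$, so your claim would apply to every compact $K$, yet $g$ is not zero. The point is that $\mu_1(g,x)=\infty$ is a statement about \emph{shrinking} balls: the radius $r_x=r_x(M)$ tends to $0$ as $M$ grows, so the cover of $K$ becomes finer, the number of balls and the partition-of-unity gradients blow up, and the constant you actually obtain is $C(K,M)/M$ with no control as $M\to\infty$. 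To salvage a covering argument you would need either a bounded-overlap (Besicovitch-type) cover applied at the level of the limit measures --- namely $\gamma(B_{r_x/2}(x))\le M^{-1}\Gamma(B_{r_x}(x))$ summed over a cover with $\sum_i\chi_{B_{r_{x_i}}(x_i)}\le C_N$, giving $\gamma(K)\le C_N M^{-1}\norm{\Gamma}$ --- or to exploit that the error terms $\int |u_n-u|^p|\nabla\eta_i|^p\,\dx$ vanish as $n\to\infty$ for each fixed cover; neither appears in your sketch.

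The paper takes a different, purely measure-theoretic route that avoids all uniformity issues: testing $\gamma_n$ against $|\phi|^p$ and using part (i) gives $\gamma(E)\le \Gamma(E)/\mu_1(g,\Om)$ for every Borel set $E$, hence $\gamma\ll\Gamma$; replacing $g$ by $g\chi_{B_r(x)}$ gives $\gamma(B_r(x))\le \Gamma(B_r(x))/\mu_1(g,B_r(x))$, and the Lebesgue differentiation theorem taken with respect to $\Gamma$ then yields $\frac{\d\gamma}{\d\Gamma}(x)\le 1/\mu_1(g,x)=0$ at every $x\notin\Sigma_g$, so by Radon--Nikodym $\gamma$ charges nothing outside $\overline{\Sigma_g}$. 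You should replace your covering step by this argument (or by the Besicovitch variant indicated above).
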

\begin{proof} $(i)$ 
  Let $\epsilon >0$ be given. Using \eqref{inequality},
   \begin{eqnarray*}
      \bigg| \int_{\Om} |\nabla((u_n  &-&  u)\Phi)|^p \dx - \int_{\Om} |\nabla(u_n -  u)|^p |\Phi|^p \dx \bigg|  \\
              &\leq &   \epsilon \int_{\Om} |\nabla(u_n -  u)|^p |\Phi|^p \dx + C(\epsilon,p) \int_{\Om} |u_n -  u|^p |\nabla \Phi|^p \dx. 
   \end{eqnarray*}
 Since $\nabla \Phi$ is compactly supported, by Rellich-Kondrachov compactness theorem \cite[Theorem 2.6.3]{KesavanPDE}, the second term in the right-hand side of the above inequality goes to 0 as $n \ra \infty$. Further, as $(u_n)$ is bounded in $\Dp$ and $\epsilon>0$ 
 is arbitrary, we obtain the desired result.

$(ii)$ Let $\mu \in (0,\mu_1(g))$. From the definition of $\E_{g,\mu}(\R^N)$, we have
\begin{align} \label{1ststep}
    \E_{g,\mu}(\R^N) \leq \displaystyle  \displaystyle\frac{\displaystyle \int_{\R^N} [|\nabla \phi (u_n-u)|^p-\mu g|\phi (u_n-u)|^p] \dx }{\left[\displaystyle\int_{\R^N} |\phi (u_n-u)|^{p^*} \dx \right]^{\frac{p}{p^*}}} \leq \displaystyle  \displaystyle\frac{\displaystyle \int_{\R^N} |\nabla \phi (u_n-u)|^p \dx}{\left[\displaystyle\int_{\R^N} |\phi (u_n-u)|^{p^*} \dx \right]^{\frac{p}{p^*}}}
\end{align}
for all $\phi \in \text{C}_c^{\infty}(\R^N).$ By the above assertion $(i)$ we have 
\begin{align*} 
    \lim_{n \ra \infty} \int_{\R^N} |\nabla \phi (u_n-u)|^p\dx = \lim_{n \ra \infty} \int_{\R^N} |\phi \nabla (u_n-u)|^p \dx \,.
\end{align*}
Thus, by taking $n \ra \infty$ in \eqref{1ststep},
$$\E_{g,\mu}(\R^N) \left[\int_{\R^N} |\phi|^{p^*} \ d\nu \right]^{\frac{p}{p^*}} \leq \int_{\R^N} |\phi|^p \ d \Gamma \,.$$
Now, $(ii)$ follows from Proposition \ref{representation}.

$(iii)$ For $\phi \in \text{C}_c^{\infty}(\R^N)$, $(u_n-u) \phi \in \Dp$, and since $g \in \H_p(\Om)$, it follows that
\begin{eqnarray*}
 \int_{\R^N} |\phi|^p \d \gamma_n  = \int_{\Om} g|(u_n-u)\phi|^p \dx & \leq &  \frac{1}{\mu_1(g,\Om)}  \int_{\Om} |\nabla((u_n -  u)\phi)|^p \dx  \\
& = & \frac{1}{\mu_1(g,\Om)}  \int_{\R^N} |\nabla((u_n -  u)\phi)|^p \dx.
\end{eqnarray*}
Take $n \ra \infty$ and use assertion $(i)$ to obtain
 \begin{align} \label{forrmk}
     \int_{\R^N} |\phi|^p \d \gamma \leq \frac{1}{\mu_1(g,\Om)} \int_{\R^N} |\phi|^p \d \Gamma .
 \end{align}
Now by the density of $\text{C}_c^{\infty}(\R^N)$ in $\text{C}_0(\R^N)$  together with Proposition \ref{defmeasure}, we get 
  \begin{equation}\label{measureinequality1}
   \gamma(E) \leq \frac{\Gamma (E)}{\mu_1(g,\Om)}   \ , \forall E \in \mathbb{B}(\R^N).
  \end{equation}
 In particular, $\gamma \ll \Gamma$ and hence by Radon-Nikodym theorem, 
 \begin{equation} \label{measureinequality}
  \gamma(E) = \int_E  \frac{\d \gamma}{\d \Gamma} \d\Gamma \ , \forall E \in \mathbb{B}(\R^N). 
 \end{equation}
 Further, by Lebesgue differentiation theorem (page 152-168 of \cite{Federer}) we have 
 \begin{equation} \label{Lebdiff}
  \frac{\d \gamma}{\d \Gamma}(x) = \lim_{r \ra 0} \frac{\gamma (B_r(x))}{\Gamma (B_r(x))}.
 \end{equation}
Now replacing $g$ by $g \chi_{B_r(x)}$ and proceeding as before, one can get an analogue of \eqref{measureinequality1}as follows:
 \[ \gamma(B_r(x)) \leq \frac{\Gamma (B_r(x))}{\mu_1(g,B_r(x))}.\] 
 Thus from \eqref{Lebdiff} we get 
\begin{eqnarray} \label{21}
 \frac{\d \gamma}{\d \Gamma} (x) \leq \frac{1}{\mu_1(g,x)} \,.
\end{eqnarray} 
Now from \eqref{measureinequality} and \eqref{21}, we conclude that $\gamma$ is supported on $\overline{\Sigma_g}$.
\end{proof}

As we have seen from the above proposition that $\nu$ is supported on the countable set $F_{\mathbb{J}}=\{x_j \in \R^N: j \in \mathbb{J}\}$. Now let us define  $$\Gamma_{F_{\mathbb{J}}}=\sum_{j \in \mathbb{J}} \Gamma_j \delta_{x_j} \,,  \gamma_{F_{\mathbb{J}}}=\sum_{j \in \mathbb{J}} \gamma_j \delta_{x_j} \,, \ \text{and} \ \zeta_\mu=\Gamma_{F_{\mathbb{J}}}-\mu \gamma_{F_{\mathbb{J}}}$$ for  $\mu \in (0,\mu_1(g))$. Then we have the following proposition.
\begin{proposition} \label{ConCpct:new}
Let $H$ be a closed subgroup of $\mathcal{O}(N)$, $g \in \mathcal{H}_p(\R^N)$ be a non-negative $H$-invariant Hardy potential, and $(u_n)$ be a sequence in $\mathcal{D}_p(\R^N)^H$ such that $u_n  \wra u \ \mbox{in} \ \mathcal{D}_p(\R^N)^H$. If $u=0$ and $\E_{g,\mu}^H(\R^N) \norm{\nu}^{\frac{p}{p^*}} = \norm{\zeta_\mu}$ for some $\mu \in (0,\mu_1(g))$, then $\nu$ is either zero or concentrated on a single finite
 $H$-orbit in $\R^N$ \,.
\end{proposition}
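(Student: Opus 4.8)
The plan is to localize the $H$-invariant variational level $\E_{g,\mu}^H(\R^N)$ at each $H$-orbit of atoms of $\nu$, sum the resulting estimates, and then use the equality hypothesis together with the strict subadditivity of $t\mapsto t^{p/p^*}$ to force all but one orbit to carry no mass. First I would record the consequences of $u=0$ and $H$-invariance: when $u=0$ we have $\Gamma=\widetilde\Gamma$, $\gamma=\widetilde\gamma$, and since each $u_n$ is $H$-invariant and every $h\in H$ is orthogonal (so $|\nabla(u_n\circ h^{-1})|=|\nabla u_n|\circ h^{-1}$), while $u_n\circ h^{-1}=u_n$ and $g\circ h^{-1}=g$, the density $|u_n|^{p^*}$ is $H$-invariant and hence so is the limit measure $\nu$. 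Therefore the atom set $F_{\mathbb{J}}$ is $H$-invariant and $\nu$ is constant along each $H$-orbit inside $F_{\mathbb{J}}$; because $\|\nu\|<\infty$ while $\nu_j>0$ for every $j$ (Proposition \ref{ConCpct:1}(ii)), each such orbit is necessarily finite. I would then write $F_{\mathbb{J}}=\bigsqcup_k O_k$ as a countable disjoint union of finite $H$-orbits, so that $\|\nu\|=\sum_k\nu(O_k)$, and — since $\zeta_\mu$ is carried by $F_{\mathbb{J}}$ and each $O_k\subset F_{\mathbb{J}}$ is finite — $\|\zeta_\mu\|=\sum_k\zeta_\mu(O_k)$ with $\zeta_\mu(O_k)=\Gamma(O_k)-\mu\gamma(O_k)$.

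The core step is the orbitwise estimate $\E_{g,\mu}^H(\R^N)\,\nu(O)^{p/p^*}\le\zeta_\mu(O)$ for each finite orbit $O=\{y_1,\dots,y_m\}\subset F_{\mathbb{J}}$. To get it I would manufacture an $H$-invariant cutoff by symmetrizing a radial bump over the orbit: for small $\eps>0$ take $\eta_\eps\in\text{C}_c^{\infty}(\R^N)$ radial with $0\le\eta_\eps\le1$, $\eta_\eps\equiv1$ on $B_\eps(0)$ and $\operatorname{supp}\eta_\eps\subset B_{2\eps}(0)$, and set $\phi_\eps:=\sum_{i=1}^m\eta_\eps(\cdot-y_i)$; since $H$ permutes $O$ and $\eta_\eps$ is radial, $\phi_\eps\in\text{C}_c^{\infty}(\R^N)$ is $H$-invariant. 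Then $u_n\phi_\eps\in\mathcal{D}_p(\R^N)^H$, so by the definition of $\E_{g,\mu}^H(\R^N)$ and the $p$-homogeneity of $J_{g,\mu}$,
$$\E_{g,\mu}^H(\R^N)\left(\int_{\R^N}|u_n\phi_\eps|^{p^*}\dx\right)^{p/p^*}\le\int_{\R^N}|\nabla(u_n\phi_\eps)|^p\dx-\mu\int_{\R^N}g|u_n\phi_\eps|^p\dx.$$
Passing $n\to\infty$, Proposition \ref{ConCpct:1}(i) (applicable since $\nabla\phi_\eps$ has compact support) lets me replace $|\nabla(u_n\phi_\eps)|^p$ by $|\phi_\eps|^p|\nabla u_n|^p$ in the limit, and together with the weak$^*$ convergences $\nu_n\wrastar\nu$, $\Gamma_n\wrastar\Gamma$, $\gamma_n\wrastar\gamma$ this yields $\E_{g,\mu}^H(\R^N)(\int|\phi_\eps|^{p^*}\,d\nu)^{p/p^*}\le\int|\phi_\eps|^p\,d\Gamma-\mu\int|\phi_\eps|^p\,d\gamma$. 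Finally, letting $\eps\to0$ and using continuity from above of the finite measures $\nu,\Gamma,\gamma$ on the closed sets $\bigcup_i\overline{B_{2\eps}(y_i)}\downarrow O$, the three integrals converge to $\nu(O)$, $\Gamma(O)$ and $\gamma(O)$ respectively, which gives the claimed inequality.

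Finally I would sum over the orbits. From \eqref{measureinequality1} and $\mu<\mu_1(g)$ we get $\zeta_\mu\ge(1-\mu/\mu_1(g))\Gamma_{F_{\mathbb{J}}}\ge0$, so summing the orbitwise estimate gives $\|\zeta_\mu\|=\sum_k\zeta_\mu(O_k)\ge\E_{g,\mu}^H(\R^N)\sum_k\nu(O_k)^{p/p^*}$. Combining this with the hypothesis $\|\zeta_\mu\|=\E_{g,\mu}^H(\R^N)\|\nu\|^{p/p^*}=\E_{g,\mu}^H(\R^N)(\sum_k\nu(O_k))^{p/p^*}$ and cancelling $\E_{g,\mu}^H(\R^N)$ (which is positive, since $\E_{g,\mu}^H(\R^N)\ge\E_{g,\mu}(\R^N)>0$) yields $(\sum_k\nu(O_k))^{p/p^*}\ge\sum_k\nu(O_k)^{p/p^*}$. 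Since $0<p/p^*<1$, the function $t\mapsto t^{p/p^*}$ is strictly subadditive on $[0,\infty)$, so the opposite inequality always holds; hence equality must occur, which is possible only if at most one summand $\nu(O_k)$ is nonzero. As every point of $F_{\mathbb{J}}$ carries positive $\nu$-mass, this forces either $F_{\mathbb{J}}=\emptyset$ (so $\nu=0$) or $F_{\mathbb{J}}$ to be a single finite $H$-orbit, i.e. $\nu$ is concentrated on a single finite $H$-orbit. The hard part is the orbitwise estimate — producing a genuinely $H$-invariant localizer at each finite orbit and pushing the double limit ($n\to\infty$ via Proposition \ref{ConCpct:1}(i), then $\eps\to0$ via continuity from above) through so that the right-hand side lands precisely on the atomic quantity $\zeta_\mu(O)$; everything else is orbit bookkeeping plus the elementary power inequality, with the extra observation — which is what upgrades "one orbit" to "one finite orbit" — that $\nu_j>0$ together with $\|\nu\|<\infty$ forces all orbits meeting $F_{\mathbb{J}}$ to be finite.
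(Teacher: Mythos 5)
Your proof is correct, and it reaches the conclusion by a route whose mechanics differ from the paper's. The paper first derives the measure-level inequality $\nu^{p/p^*}\le[\E^H_{g,\mu}(\R^N)]^{-1}\zeta_\mu$ on $H$-invariant Borel sets from arbitrary $H$-invariant test functions, then applies H\"older's inequality to obtain $\zeta_\mu^{p^*/p}\le\|\zeta_\mu\|^{p^*/p-1}\zeta_\mu$, and combines these with the equality hypothesis to get the Lions-type dichotomy $\nu(E)\in\{0,\|\nu\|\}$ for every $H$-invariant Borel set $E$; the finiteness of the distinguished orbit is only established afterwards by a counting argument. You instead observe at the outset that, since $u=0$ and each $u_n$ is $H$-invariant, $\nu$ is an $H$-invariant purely atomic measure, so that $\nu_j>0$ together with $\|\nu\|<\infty$ forces every orbit meeting $F_{\mathbb{J}}$ to be finite and yields the decomposition of $F_{\mathbb{J}}$ into finite orbits $O_k$; you then prove the orbitwise bound $\E_{g,\mu}^H(\R^N)\,\nu(O_k)^{p/p^*}\le\zeta_\mu(O_k)$ with an explicit symmetrized cutoff (much as the paper localizes at $Hx_j$ in the proof of Lemma \ref{ConCpct:2}(a)), and conclude by summing and invoking the strict subadditivity of $t\mapsto t^{p/p^*}$. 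Both arguments ultimately rest on the same concavity of the exponent $p/p^*$, but yours exploits the atomic and $H$-invariant structure of $\nu$ directly, while the paper's H\"older step works with general $H$-invariant sets and defers the orbit structure to the end; your early finiteness observation is a genuine small simplification. All the supporting details you need — positivity of $\E_{g,\mu}^H(\R^N)$, nonnegativity of $\zeta_\mu$ via \eqref{measureinequality1}, the passage $n\to\infty$ through Proposition \ref{ConCpct:1}(i), and the passage $\eps\to 0$ by continuity from above of the finite measures on the shrinking closed neighbourhoods of $O_k$ — are correctly accounted for.
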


\begin{proof} Let $u=0$ and $\E_{g,\mu}^H(\R^N) \norm{\nu}^{\frac{p}{p^*}} = \norm{\zeta_\mu}$.  First we show that $\nu$ is supported on a single $H$ orbit.  By the definition of $\E_{g,\mu}^H(\R^N)$ 
\begin{align*}
\left[\int_{\R^N}  |\phi u_n|^{p*} \dx \right]^{\frac{p}{p*}} \leq [\E_{g,\mu}^H(\R^N)]^{-1} \int_{\R^N} [|\nabla (\phi u_n)|^p - \mu g |\phi u_n|^p] \dx
\end{align*}
for any $H$-invariant function
$\phi \in \mbox{C}_c^{\infty}(\R^N)$.
Since $u=0$, the above inequality together with \eqref{takingout} yield
\begin{equation*} 
\left[ \int_{\R^N}  |\phi|^{p*} \d\nu \right]^{\frac{p}{p*}}  \leq [\E_{g,\mu}^H(\R^N)]^{-1} \left[\int_{\R^N} |\phi|^p \d \Ga - \mu \int_{\R^N} |\phi|^p \d \gamma \right]
\end{equation*}
Consequently, $\nu^{\frac{p}{p*}} \leq [\E_{g,\mu}^H(\R^N)]^{-1} [\Gamma - \mu \gamma]$. Since $\nu$ is supported on $F_{\mathbb{J}}$, it follows that \begin{equation} \label{req1}
\nu^{\frac{p}{p*}} \leq [\E_{g,\mu}^H(\R^N)]^{-1} \zeta_\mu \,.
\end{equation}
Further, by applying Holder's inequality we get $\left[\int_{\R^N} |\phi|^p \d\zeta_\mu \right]^{\frac{p^*}{p}} \leq \left[\int_{\R^N} |\phi|^{p^*} \d\zeta_\mu \right] \|\zeta_\mu\|^{\frac{p^*}{p}-1}$. This gives $\zeta_\mu^{\frac{p^*}{p}} \leq \|\zeta_\mu\|^{\frac{p^*}{p}-1} \zeta_\mu$. Thus, \eqref{req1} gives
$\nu(E)  \leq [\E_{g,\mu}^H(\R^N)]^{-{{\frac{p^*}{p}}}} \|\zeta_\mu\|^{\frac{p^*}{N}} \zeta_\mu(E) $ for all $H$-invariant $E \in \mathbb{B}(\R^N)$.
Now, since $\E_{g,\mu}^H(\R^N) \norm{\nu}^{\frac{p}{p^*}} = \norm{\zeta_\mu}$, it follows that 
$$\nu (E) = [\E_{g,\mu}^H(\R^N)]^{-{\frac{p*}{p}}} \|\zeta_\mu\|^{\frac{p*}{N}} \zeta_\mu (E)$$ for all $H$-invariant $E \in \mathbb{B}(\R^N)$.
We use this equality in \eqref{req1}, and also $\E_{g,\mu}^H(\R^N) \norm{\nu}^{\frac{p}{p^*}} = \norm{\zeta_\mu}$ to obtain
$$\nu (E)^{\frac{1}{p*}} \nu (\R^N)^{\frac{1}{N}} \leq \nu (E)^{\frac{1}{p}} \,,$$
for any $H$-invariant $E \in \mathbb{B}(\R^N)$.
Thus, $\nu(E)$ is either $0$ or $\|\nu\|$, and hence $\nu$ is concentrated on a single $H$ orbit.
Now, let $\nu$ be concentrated on the orbit $H \xi$ for some $\xi \in \R^N$. We show that $H\xi$ is finite. It follows from the inequality \eqref{req1} and Lemma \ref{representation} that there exist a countable set $F_{\mathbb{J}}=\left\{x_j \in \R^N : j \in \mathbb{J} \right\}$ and $ \nu_j \in (0, \infty)$ such that
$\nu =  \sum_{j \in \mathbb{J}} \nu_j \delta_{x_j}.$
Since $\nu$ is concentrated at $H \xi$, it is clear that $ F_{\mathbb{J}} = H \xi.$ Noticing $\nu$ is invariant under any orthogonal transformations $h \in H$ (i.e., $\nu(h(E))=\nu(E)$ for all $E \in \mathbb{B}(\R^N)$ and $h\in H$), we infer that $\nu_i=\nu_j$ for all $i,j \in \mathbb{J}.$ Thus, $\mathbb{J}$ has to be finite (as $\|\nu\|<\infty$). 
\end{proof}

Now, we are ready to derive $(g,H)$-depended concentration compactness lemma.   For $g\equiv 0$, a similar result is obtained in \cite[Lemma 4.3]{Shoyeb}.   Here we obtain an analogous  result for the case $g\ne 0$ under the assumption  $|\overline{{\sum}_g}|=0$.
\begin{lemma} \label{ConCpct:2}
Let $H$ be a closed subgroup of $\mathcal{O}(N)$, $g \in \mathcal{H}_p(\R^N)$ be non-negative and $H$-invariant. Assume that $(u_n)$ is a sequence in $\mathcal{D}_p(\R^N)^H$ such that $u_n  \wra u \ \mbox{in} \ \mathcal{D}_p(\R^N)^H$. If $|\overline{{\sum}_g}|=0$,  
then for $\mu \in (0,\mu_1(g))$, the following holds:
\begin{enumerate}[(a)]
 \item $  \C_{g,\mu}^{H,*}(\R^N) \norm{\nu}^{\frac{p}{p^*}} + \mu \norm{\gamma} \leq  \norm{\Gamma_{\overline{\sum_g} \cup F_{\mathbb{J}}}} ,$  
 \item $ \C_{g,\mu}^{H}(\infty) \nu_{\infty}^{\frac{p}{p^*}} +  \mu \gamma_{\infty}\leq \Gamma_{\infty},$
\item $\displaystyle \overline{\lim_{n \ra \infty}} \displaystyle \int_{\R^N} |u_n|^{p^*} \dx = \int_{\R^N} |u|^{p^*} \dx + \norm{\nu} + \nu_{\infty},$
\item  $\displaystyle \overline{\lim_{n \ra \infty}} \displaystyle \int_{\R^N} g|u_n|^{p} \dx = \int_{\R^N} g|u|^{p} \dx + \norm{\gamma} + \gamma_{\infty},$
 \item 
          $\displaystyle\overline{\lim_{n \ra \infty}} \displaystyle\int_{\R^N} |\nabla u_n|^p \dx   \geq  \displaystyle \int_{\R^N} |\nabla u|^p \dx  + \norm{\Gamma_{\overline{\sum_g} \cup F_{\mathbb{J}}}} + \Gamma_{\infty} $,
 \item 
$\displaystyle \overline{\lim_{n \ra \infty}} \displaystyle\int_{\R^N} \left[|\nabla u_n|^p - \mu g |u_n|^p \right]\dx \geq\displaystyle \int_{\R^N} \left[|\nabla u|^p - \mu g |u|^p \right] \dx + \C_{g,\mu}^{H,*}(\R^N) \norm{\nu}^{\frac{p}{p^*}} + \C_{g,\mu}^{H}(\infty) \nu_{\infty}^{\frac{p}{p^*}}.$

 \end{enumerate}
\end{lemma}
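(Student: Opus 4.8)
The plan is to establish the six items in the order (c), (d), (a), (b), (e) and, finally, (f), which is merely a combination of the others. Since one is free to pass to a subsequence in every application, we assume throughout that $u_n\to u$ a.e.\ in $\R^N$ and that all the limits superior below (in particular those defining $\nu_\infty,\gamma_\infty,\Gamma_\infty$ and the limits of $\int_{\R^N}|u_n|^{p^*}\dx$, $\int_{\R^N}g|u_n|^p\dx$, $\int_{\R^N}|\Gr u_n|^p\dx$) are genuine limits; the weak$^*$ limits $\nu,\gamma,\Gamma$ are insensitive to this. The two tools that drive everything are the elementary inequality \eqref{inequality} (used to peel off $u$ or $\Gr u$, whose tails are small since $u\in\D_p(\R^N)$) and the defining inequality of $\E_{g,\mu}^{H}$: for a subdomain $\Omega'\subseteq\R^N$ and $0\ne v\in\D_p(\Omega')^H$, one has $\E_{g,\mu}^{H}(\Omega')\,\norm{v}_{p^*}^p\le\int_{\R^N}[|\Gr v|^p-\mu g|v|^p]\dx$.

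For (c), fix $R$ and write $\int_{\R^N}|u_n|^{p^*}\dx=\int_{\R^N}(1-\Phi_R)|u_n|^{p^*}\dx+\int_{\R^N}\Phi_R|u_n|^{p^*}\dx$. Since $1-\Phi_R\in\mathrm{C}_c(\R^N)$, Lemma \ref{Bresiz-Lieb} applied to $(1-\Phi_R)^{1/p^*}u_n$ and the weak$^*$ convergence $\nu_n\wrastar\nu$ show that the first integral converges, as $n\to\infty$, to $\int_{\R^N}(1-\Phi_R)\,\d\nu+\int_{\R^N}(1-\Phi_R)|u|^{p^*}\dx$; hence $\lim_n\int_{\R^N}|u_n|^{p^*}\dx$ equals this number plus $\lim_n\int_{\R^N}\Phi_R|u_n|^{p^*}\dx$. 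Letting $R\to\infty$ — monotone convergence on the first two terms, Proposition \ref{equivalent_limit}(i) on the last — gives (c). Item (d) is obtained verbatim, with $\nu$ replaced by $\gamma$, Lemma \ref{Bresiz-Lieb} applied to $g^{1/p}(1-\Phi_R)^{1/p}u_n$ (uniformly $L^p$-bounded because $g\in\mathcal H_p(\R^N)$), and Proposition \ref{equivalent_limit}(ii) in place of (i).

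For (a), recall from Proposition \ref{ConCpct:1} that $\nu=\sum_{j\in\mathbb J}\nu_j\delta_{x_j}$ is supported on $F_{\mathbb J}$ and $\gamma$ on $\overline{\Sigma_g}$; as $\nu$ is $H$-invariant with finite mass, each atom $x_j$ has a \emph{finite} orbit $O_j=Hx_j\subseteq F_{\mathbb J}$, so $F_{\mathbb J}$ is a countable disjoint union of orbits $\{O_k\}_k$. Fix one, $O=Hx_0$, and take, for small $r>0$, an $H$-invariant Lipschitz cut-off $\psi_r$ with $\psi_r\equiv1$ on $H(B_r(x_0))$, $\mathrm{supp}\,\psi_r\subseteq H(B_{2r}(x_0))$ and $\Gr\psi_r$ compactly supported, so that $\psi_r(u_n-u)\in\D_p(H(B_{2r}(x_0)))^H$. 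Testing the defining inequality of $\E_{g,\mu}^{H}(H(B_{2r}(x_0)))$ with this function and letting $n\to\infty$ (Proposition \ref{ConCpct:1}(i) for the gradient term, $\nu_n\wrastar\nu$, $\gamma_n\wrastar\gamma$, $\Gamma_n\wrastar\Gamma$ for the rest) gives $\E_{g,\mu}^{H}(H(B_{2r}(x_0)))\big[\int\psi_r^{p^*}\d\nu\big]^{p/p^*}\le\int\psi_r^p\,\d\Gamma-\mu\int\psi_r^p\,\d\gamma$; letting $r\to0$ (dominated convergence, $\bigcap_{r>0}H(B_{2r}(x_0))=O$, and $\E_{g,\mu}^{H}(H(B_{2r}(x_0)))\to\C_{g,\mu}^{H}(x_0)\ge\C_{g,\mu}^{H,*}(\R^N)$) yields $\C_{g,\mu}^{H,*}(\R^N)\,\nu(O)^{p/p^*}\le\Gamma(O)-\mu\gamma(O)$. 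Summing over the distinct orbits $O_k$, using subadditivity of $t\mapsto t^{p/p^*}$ (valid since $p\le p^*$) on the left and countable additivity on the right, and adding $\mu\gamma(E)\le\frac{\mu}{\mu_1(g)}\Gamma(E)\le\Gamma(E)$ from \eqref{measureinequality1} with $E=\overline{\Sigma_g}\setminus F_{\mathbb J}$, one gets $\C_{g,\mu}^{H,*}(\R^N)\norm{\nu}^{p/p^*}+\mu\norm{\gamma}\le\Gamma(\overline{\Sigma_g}\cup F_{\mathbb J})=\norm{\Gamma_{\overline{\Sigma_g}\cup F_{\mathbb J}}}$, which is (a). Item (b) is the analogue at infinity: choosing $\Phi_R$ radial (hence $H$-invariant), $\Phi_R(u_n-u)\in\D_p(B_R^c)^H$, and testing the defining inequality of $\E_{g,\mu}^{H}(B_R^c)$ with it, passing to the limit in $n$ (squeeze $\chi_{\{|x|>R+1\}}\le\Phi_R^q\le\chi_{\{|x|>R\}}$ for $q\in\{p,p^*\}$, Proposition \ref{ConCpct:1}(i) for the gradient term, $\Gr u\in L^p$), and then $R\to\infty$ using $\E_{g,\mu}^{H}(B_R^c)\to\C_{g,\mu}^{H}(\infty)$, gives (b).

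The main obstacle is (e): since $\Gr u_n$ need not converge a.e., Lemma \ref{Bresiz-Lieb} is unavailable for $|\Gr u_n|^p$, so one can only get a \emph{lower} bound, via weak lower semicontinuity after a threefold localisation, and the delicate point is to keep the three contributions $\int|\Gr u|^p\dx$, $\norm{\Gamma_{\overline{\Sigma_g}\cup F_{\mathbb J}}}$ and $\Gamma_\infty$ genuinely additive. Fix $\epsilon,\delta>0$; by inner regularity of $\Gamma$ pick a compact $K\subseteq\overline{\Sigma_g}\cup F_{\mathbb J}$ with $\Gamma(K)>\norm{\Gamma_{\overline{\Sigma_g}\cup F_{\mathbb J}}}-\delta$ and, exploiting $|\overline{\Sigma_g}\cup F_{\mathbb J}|=0$, a cut-off $\theta$ with $\theta\equiv1$ on $K$ and $\mathrm{supp}\,\theta$ of arbitrarily small Lebesgue measure. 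For $R$ large enough that $\mathrm{supp}\,\theta\subseteq B_R$, decompose $\int_{\R^N}|\Gr u_n|^p\dx$ by the partition of unity $1=(1-\theta^p)(1-\Phi_R^p)+\theta^p(1-\Phi_R^p)+\Phi_R^p$. The first term is $\ge\int(1-\theta^p)(1-\Phi_R^p)|\Gr u|^p\dx$ by weak lower semicontinuity of $V\mapsto\int(1-\theta^p)(1-\Phi_R^p)|V|^p\dx$ applied to $\Gr u_n\wra\Gr u$ in $L^p$, hence $\ge\int|\Gr u|^p\dx-\int_{\mathrm{supp}\,\theta}|\Gr u|^p\dx-\int_{|x|>R}|\Gr u|^p\dx$; the second equals $\int\theta^p|\Gr u_n|^p\dx$ and, by \eqref{inequality} with $a=\Gr(u_n-u)$, $b=\Gr u$ together with $\int\theta^p|\Gr(u_n-u)|^p\dx\to\int\theta^p\,\d\Gamma\ge\Gamma(K)$, has limit $\ge(1-\epsilon)\Gamma(K)-C(\epsilon)\int_{\mathrm{supp}\,\theta}|\Gr u|^p\dx$; the third, again by \eqref{inequality} and then $R\to\infty$, satisfies $\lim_R\lim_n\int\Phi_R^p|\Gr u_n|^p\dx\ge(1-\epsilon)\Gamma_\infty$. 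Adding the three, letting $R\to\infty$, then shrinking $\mathrm{supp}\,\theta$ (so $\int_{\mathrm{supp}\,\theta}|\Gr u|^p\dx\to0$), then $\delta\to0$ and $\epsilon\to0$, yields (e). Finally, for (f), $\lim_n\int_{\R^N}[|\Gr u_n|^p-\mu g|u_n|^p]\dx=\lim_n\int_{\R^N}|\Gr u_n|^p\dx-\mu\lim_n\int_{\R^N}g|u_n|^p\dx$; substituting the lower bound (e) and the identity (d), and then (a) and (b) for the two differences $\norm{\Gamma_{\overline{\Sigma_g}\cup F_{\mathbb J}}}-\mu\norm{\gamma}$ and $\Gamma_\infty-\mu\gamma_\infty$, gives (f).
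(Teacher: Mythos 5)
Your proof is correct, and for parts (a)--(d) and (f) it follows the paper's argument essentially verbatim: the same testing of the defining inequality for $\E^H_{g,\mu}$ on $H$-invariant cut-offs of $u_n-u$ combined with \eqref{takingout}, the same Brezis--Lieb/$\Phi_R$ splitting via Proposition \ref{equivalent_limit}, the same subadditivity of $t\mapsto t^{p/p^*}$ over the atoms, and the same addition of the Hardy-inequality bound $\mu\gamma\le\Gamma$ on $\overline{\Sigma_g}\setminus F_{\mathbb J}$. The one place you genuinely depart from the paper is part (e): the paper introduces the auxiliary weak* limit $\widetilde\Gamma$ of $|\nabla u_n|^p\,\dx$ and proves three claims --- $\widetilde\Gamma=\Gamma$ on $F_{\mathbb J}$ and on $\overline{\Sigma_g}$ (via \eqref{inequality} and open neighbourhoods of small Lebesgue measure), $\widetilde\Gamma\ge|\nabla u|^p\,\dx$ by lower semicontinuity, and mutual singularity of $|\nabla u|^p\,\dx$ with $\Gamma_{\overline{\Sigma_g}\cup F_{\mathbb J}}$ --- whereas you avoid $\widetilde\Gamma$ entirely and run a direct threefold partition-of-unity estimate. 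The two arguments rest on exactly the same ingredients (the null-measure hypothesis $|\overline{\Sigma_g}\cup F_{\mathbb J}|=0$ so that $\int_{\mathrm{supp}\,\theta}|\nabla u|^p\dx$ can be made small, plus \eqref{inequality} to trade $|\nabla u_n|^p$ for $|\nabla(u_n-u)|^p$ near the concentration set), so this is a repackaging rather than a new idea; your version has the mild advantage of keeping all error terms explicit and quantified, at the cost of a more delicate bookkeeping of the order of the limits in $n$, $R$, $\mathrm{supp}\,\theta$, $\delta$ and $\epsilon$ (which you handle correctly, since the left-hand side is independent of all these parameters). Your preliminary reduction to a subsequence along which all the iterated upper limits are genuine limits is the standard diagonal argument and is consistent with how the paper itself (implicitly) fixes a subsequence when defining $\nu,\gamma,\Gamma$ and the quantities at infinity.
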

\begin{proof} 

$(a)$ By the definition of $\E_{g,\mu}^H(H(B_r(x)))$, we have 
$$\E_{g,\mu}^H(H(B_r(x))) \leq \displaystyle  \frac{\displaystyle \int_{\R^N} [|\nabla \phi (u_n-u)|^p-\mu g|\phi (u_n-u)|^p] \dx }{\left[\displaystyle\int_{\R^N} |\phi (u_n-u)|^{p^*} \dx \right]^{\frac{p}{p^*}}}$$
for all $H$-inavriants $\phi \in \text{C}_c^{\infty}( H(B_r(x))).$ By taking $n \ra \infty$ in the above inequality and using \eqref{takingout}, we get
$$\E_{g,\mu}^H( H(B_r(x))) \left[\int_{\R^N} |\phi|^{p^*} \d\nu \right]^{\frac{p}{p^*}} + \mu \int_{\R^N} |\phi|^p \d\gamma \leq \int_{\R^N} |\phi|^p \d \Gamma \,.$$
In particular, for $x_j$ with $j\in \mathbb{J}$, by taking $\phi =1$ on $H(\{x_j\})$ and then letting $r \ra 0$, we get
$$\C_{g,\mu}^H(x_j) \ |\nu (Hx_j)|^{\frac{p}{p^*}} + \mu \gamma (Hx_j) \leq \Gamma (Hx_j) \,\forall j\in \mathbb{J}.$$
Taking the sum over $\mathbb{J}$ and using the  concavity of the map $f(t):=t^{\frac{p}{p^*}}$, we obtain
\begin{align} \label{altomic}
    \C_{g,\mu}^{H,*}(\R^N) \ \|\nu \|^{\frac{p}{p^*}} + \mu \|\gamma_{F_{\mathbb{J}}}\|  \leq \|\Gamma_{F_{\mathbb{J}}}\| \,.
\end{align}
Next we show that $\mu \|\gamma_{{\overline{\sum_g} \setminus F_{\mathbb{J}}}}\| \leq \|\Gamma_{{\overline{\sum_g} \setminus F_{\mathbb{J}}}}\|$. Since $g \in \H_p(\Om)$ and $\mu \in (0,\mu_1(g))$, we have
$$\mu \int_{\R^N} g |(u_n-u)\phi|^p \dx \leq \frac{\mu}{\mu_1(g)} \int_{\R^N} |\nabla (u_n-u) \phi|^p \dx \leq  \int_{\R^N} |\nabla (u_n-u) \phi|^p \dx $$
for any $\phi \in {\rm{C}}_c^{\infty}(\R^N)$. By taking $n\ra \infty$ and using \eqref{takingout}, one can get
$$\mu \int_{\R^N}  |\phi|^p \d\gamma \leq  \int_{\R^N} |\nabla  \phi|^p \d\Gamma $$
for any $\phi \in C_c^{\infty}(\R^N)$. Thus, we have $\mu \gamma({{\overline{\sum_g} \setminus F_{\mathbb{J}}}}) \leq \Gamma ({{\overline{\sum_g} \setminus F_{\mathbb{J}}}})$. In particular, we get 
\begin{align} \label{for_sum}
  \mu \|\gamma_{{\overline{\sum_g} \setminus F_{\mathbb{J}}}}\| \leq \|\Gamma_{{\overline{\sum_g} \setminus F_{\mathbb{J}}}}\| \,. 
\end{align} 
Now, by adding \eqref{for_sum} and \eqref{altomic}, and using the fact that $\nu$, $\Gamma$ are supported on $F_{\mathbb{J}}$, $\overline{\Sigma_g}$, we get
\begin{align} \label{atomic}
    \C_{g,\mu}^{H,*}(\R^N) \ \|\nu \|^{\frac{p}{p^*}} + \mu \|\gamma\|  \leq \|\Gamma_{{\overline{\sum_g} \cup F_{\mathbb{J}}}}\| \,.
\end{align}

$(b)$ For $R>0$, choose $\Phi_R \in \text{C}_b^{1}(\R^N)^H$ satisfying  $0\leq \Phi_R \leq 1$, $\Phi_R = 0 $ on $\overline{B_R}$ and $\Phi_R = 1 $ on $B_{R+1}^c$. Then $(u_n-u)\Phi_R \in \D_p(B_R^c)^H$. In order to prove $(b)$, one can start with $\E_{g,\mu}^H( B_R^c)$ instead of $\E_{g,\mu}^H(H(B_r(x)))$ and follow the similar arguments as in $(a)$ to get 
\begin{align} \label{ineq:1}
    \E_{g,\mu}^H( H(B_R^c)) \lim_{n \ra \infty} \left[\int_{\R^N} |\Phi_R|^{p^*}|u_n-u|^{p^*} \dx \right]^{\frac{p}{p^*}} & + \mu \lim_{n \ra \infty} \int_{\R^N} g|u_n-u||\Phi_R|^p \dx \nonumber \\
    & \leq \lim_{n \ra \infty} \int_{\R^N} |\Phi_R|^p |\nabla (u_n-u)|^p \dx  \,.
\end{align}
Thus, by taking $R \ra \infty$ in \eqref{ineq:1} and using Proposition \ref{equivalent_limit}, we prove the assertion $(b).$

$(c)$ Using Brezis-Lieb lemma together with Proposition \ref{equivalent_limit}-$(i)$, we have
\begin{align*}
& \, \overline{\lim_{n\ra \infty}}  \int_{\R^N} | u_n|^{p^*} \dx \\ & = \lim_{R\ra \infty}  \overline{\lim_{n\ra \infty}} \left[ \int_{\R^N} | u_n|^{p^*} (1-\Phi_R) \dx + \int_{\R^N} | u_n|^{p^*} \Phi_R \dx  \right] \\
 &= \lim_{R\ra \infty} \overline{\lim_{n\ra \infty}} \left[  \int_{\R^N} | u|^{p^*} (1-\Phi_R) \dx  + \int_{\R^N} | u_n-u|^{p^*} (1-\Phi_R) \dx + \int_{\R^N} | u_n|^{p^*} \Phi_R \dx \right]   \\
                                          &= \int_{\R^N} | u|^{p^*} \dx + \norm{\nu} + \nu_{\infty}. 
\end{align*}

$(d)$ As in $(c)$, using Brezis-Lieb lemma together with Proposition \ref{equivalent_limit}-$(ii)$, we deduce
\begin{align*}
& \,  \overline{\lim_{n\ra \infty}} \int_{\R^N} g| u_n|^{p} \dx \\
& =  \lim_{R\ra \infty} \overline{\lim_{n\ra \infty}} \left[ \int_{\R^N} g| u_n|^{p} (1-\Phi_R) \dx + \int_{\R^N} g| u_n|^{p} \Phi_R \dx  \right] \\
                                          &= \lim_{R\ra \infty} \overline{\lim_{n\ra \infty}} \left[  \int_{\R^N} g| u|^{p} (1-\Phi_R) \dx  + \int_{\R^N} g| u_n-u|^{p} (1-\Phi_R) \dx + \int_{\R^N} g| u_n|^{p} \Phi_R \dx \right]   \\
                                          &= \int_{\R^N} g| u|^{p} \dx + \norm{\gamma} + \gamma_{\infty}. 
\end{align*}

$(e)$ We break this proof into several steps.


\noi {\bf{Claim 1:}} $\widetilde{\Gamma}_{F_{\mathbb{J}}}=\Gamma_{F_{\mathbb{J}}}$. 
Let $\phi_{\var} \in \text{C}_c^{\infty}(B_{\var}(\om))$ satisfying  $0\leq \phi_{\var} \leq 1$, $\phi_{\var}(\om) = 1$, where $\om \in F_{\mathbb{J}}$. Then, using \eqref{inequality}, we have the following:
\begin{align*}
 \left|\widetilde{\Gamma}(\phi_{\var})-\Gamma(\phi_{\var}) \right| & = \overline{\lim_{n \ra \infty}} \int_{\R^N} \big|[\nabla (u_n)|^p-|\nabla u_n-u|^p]   \big| \phi_{\var} \dx \\
&   \leq \varepsilon \int_{\R^N} \phi_{\var} |\nabla u_n|^p  \dx + C(\varepsilon,p)\int_{\R^N} \phi_{\var} |\nabla u|^p  \dx \,.  
\end{align*}
 By taking $\var \ra 0$, we get $\widetilde{\Gamma}(\om)=\Gamma_{F_{\mathbb{J}}}(\om)$.

\noi {\bf{Claim 2:}} $\tilde{\Gamma}=\Gamma$, on $\overline{\sum_g}.$ Let $E\subset\overline{\sum_g}$ be a Borel set. Thus, for each $m \in \N$, there exists an open subset $O_{m}$ containing $E$ such that $|O_m|=|O_{m} \setminus E| < \frac{1}{m}$. Let $\var >0$ be given. Then, for any $\phi \in C_c^{\infty}(O_{m})$ with $0 \leq \phi \leq 1$, using \eqref{inequality} we have
\begin{align*}
    \left|\int_{\Om}  \phi \d\Gamma_n  -\int_{\Om}  \phi \d\tilde{\Gamma}_n  \right|&= \left|\int_{\Om}  \phi |\nabla (u_n-u)|^p \dx -\int_{\Om}  \phi |\nabla u_n|^p \dx \right| \\
    &\leq \var \int_{\Om}  \phi |\nabla u_n|^p \dx + \text{C}(\var,p) \int_{\Om} \phi |\nabla u|^p \dx \\
 & \leq   \var L + \text{C}(\var,p) \int_{O_{m}}   |\nabla u|^p \dx,
\end{align*}
where $L=\sup_{n}\left\{\int_{\Om} |\nabla u_n|^p \dx\right\}$. 
Now letting $n \ra \infty$, we obtain $
 \left|\int_{\Om}  \phi \d\Gamma  -\int_{\Om}  \phi \d\tilde{\Gamma}  \right|  \leq \var L + \text{C}(\var,p) \int_{O_{m}}   |\nabla u|^p \dx.
$
Therefore,
\begin{eqnarray*} 
\left|\Gamma (O_m)-\tilde{\Gamma} (O_m) \right| &=& \sup \left\{ \left|\int_{\Om}  \phi \d\Gamma -\int_{\Om}  \phi \d\tilde{\Gamma} \right|: \phi \in C_c^{\infty}(O_m), 0 \leq \phi \leq 1 \right \} \\
&\leq & \var L + \text{C}(\var,p) \int_{O_{m}}   |\nabla u|^p \dx,
\end{eqnarray*}
Now as $m \ra \infty,$ $|O_m|\ra 0$ and hence $| \Gamma (E)-\tilde{\Gamma} (E)| \leq \var L.$ Since $\var >0$ is arbitrary, we conclude $\Gamma(E)=\tilde{\Gamma} (E).$

\noi {\bf{Claim 3}:} $\|\widetilde{\Gamma}\| \geq  \displaystyle \int_{\R^N} |\nabla u|^p \dx + \|\Gamma_{\overline{\sum_g} \cup F_{\mathbb{J}}}\|$. Choose an arbitrary $\phi \in \text{C}_c^{\infty}(\R^N)$ with $0\leq \phi \leq 1$.
Hence, by the lower semicontinuity, we have
$$\displaystyle \lim_{n \ra \infty} \int_{\R^N} |\nabla u_n|^p \phi \ \dx \geq \displaystyle \int_{\R^N} |\nabla u|^p \phi \ \dx \,.$$
This yields $\widetilde{\Gamma} \geq |\nabla u|^p \dx$.  On the other hand,  the measure $|\nabla u|^p \dx$ is  singular to $\Gamma_{\overline{\sum_g} \cup F_{\mathbb{J}}}$ (as $\overline{\sum_g} \cup F_{\mathbb{J}}$ has Lebesgue measure zero)  and hence \begin{align} \label{step3}
    \|\widetilde{\Gamma}\| \geq \displaystyle \int_{\R^N} |\nabla u|^p \dx + \|\Gamma_{\overline{\sum_g} \cup F_{\mathbb{J}}}\|.
\end{align}

Now we are ready to prove $(e)$. Observe that
\begin{eqnarray*}
 \overline{\lim_{n\ra \infty}} \int_{\R^N} | \nabla u_n|^{p} \dx =  \overline{\lim_{n\ra \infty}}  \int_{\R^N} | \nabla u_n|^{p} (1-\Phi_R) \dx + \overline{\lim_{n\ra \infty}}  \int_{\R^N}  | \nabla u_n|^{p} \Phi_R  \dx \,.
\end{eqnarray*}
Using Proposition \ref{equivalent_limit}-$(iii)$, \eqref{step3}, and \eqref{altomic}, we infer that
\begin{align*}
    \overline{\lim_{n\ra \infty}} \int_{\R^N} | \nabla u_n|^{p} \dx =\|\widetilde{\Gamma}\|+\Gamma_{\infty}  \geq \displaystyle \int_{\R^N} |\nabla u|^p \dx + \|\Gamma_{\overline{\sum_g} \cup F_{\mathbb{J}}}\|+\Gamma_{\infty} \,.
\end{align*}

$(f)$ Since $\mu \in (0,\mu_1(g))$, it follows that $\int_{\R^N} \left[|\nabla u_n|^p - \mu g |u_n|^p \right] \dx \geq 0.$ Thus,  using the sub-additivity of limsup, $(e), (d)$ we get
$$\overline{\lim_{n \ra \infty}} \displaystyle\int_{\R^N} \left[|\nabla u_n|^p - \mu g |u_n|^p \right] \dx \geq \displaystyle \int_{\R^N} \left[|\nabla u|^p - \mu g |u|^p \right] \dx + (\|\Gamma_{\overline{\sum_g} \cup F_{\mathbb{J}}}\| -\mu \|\gamma\|)+(\Gamma_{\infty} -\mu \gamma_{\infty}) \,.$$
Now use $(a)$ and $(b)$ to obtain $(f)$.

\end{proof}

The next corollary state the above results for a general domain in place of $\R^N$. 

\begin{corollary} \label{Cor_ConCpct}
Let $H$ be a closed subgroup of $\mathcal{O}(N)$, and $g \geq 0, \Om $ be as in {\rm{(H2)}}. Assume that $(u_n)$ be a sequence in $\mathcal{D}_p(\Om)^H$ such that $u_n  \wra u \ \mbox{in} \ \mathcal{D}_p(\Om)^H$. If $|\overline{{\sum}_g}|=0$,  
then for $\mu \in (0,\mu_1(g))$, the following holds:
\begin{enumerate}[(a)]
\item there exists a countable set $\mathbb{J}$ such that $\nu =\sum_{j \in \mathbb{J}} \nu_j \delta_{x_j}$, where $\nu_j\in (0,\infty)$, $x_j \in \overline{\Om}$. In particular, $\nu$ is supported on the countable set $F_{\mathbb{J}}:=\{x_j \in \overline{\Om}: j \in \mathbb{J}\}$,
\item $\gamma$ is supported on $\overline{\sum_g},$
\item $  \C_{g,\mu}^{H,*}(\Om) \norm{\nu}^{\frac{p}{p^*}} + \mu \norm{\gamma} \leq  \norm{\Gamma_{\overline{\sum_g} \cup F_{\mathbb{J}}}} $,  
 \item $ \C_{g,\mu}^{H}(\infty) \nu_{\infty}^{\frac{p}{p^*}} +  \mu \gamma_{\infty}\leq \Gamma_{\infty},$
\item $\displaystyle \overline{\lim_{n \ra \infty}} \displaystyle \int_{\Om} |u_n|^{p^*} \dx = \int_{\Om} |u|^{p^*} \dx + \norm{\nu} + \nu_{\infty},$
\item $\displaystyle \overline{\lim_{n \ra \infty}} \displaystyle \int_{\Om} g|u_n|^{p} \dx = \int_{\Om} g|u|^{p} \dx + \norm{\gamma} + \gamma_{\infty},$
\item 
          $\displaystyle \overline{\lim_{n \ra \infty}} \displaystyle\int_{\Om} |\nabla u_n|^p \dx    \geq  \displaystyle \int_{\Om} |\nabla u|^p \dx  + \norm{\Gamma_{\overline{\sum_g} \cup F_{\mathbb{J}}}} + \Gamma_{\infty} $,
 \item 
$\displaystyle \overline{\lim_{n \ra \infty}} \displaystyle\int_{\Om} \left[|\nabla u_n|^p - \mu g |u_n|^p \right] \dx \geq\displaystyle \int_{\Om} \left[|\nabla u|^p - \mu g |u|^p \right] \dx + \C_{g,\mu}^{H,*}(\Om) \norm{\nu}^{\frac{p}{p^*}} + \C_{g,\mu}^{H}(\infty) \nu_{\infty}^{\frac{p}{p^*}}$
\item if $u=0$ and $\E_{g,\mu}^H(\Om) \norm{\nu}^{\frac{p}{p^*}} = \norm{\zeta_\mu}$, where $\zeta_\mu=\sum_{j \in \mathbb{J}} (\Gamma_j-\mu \gamma_j) \delta_{x_j}$, then  $\nu$ is either zero or concentrated on a single finite
 $H$-orbit in $\R^N$ \,
 \end{enumerate}
\end{corollary}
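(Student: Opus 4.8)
\textbf{Proof strategy for Corollary \ref{Cor_ConCpct}.} The plan is to reduce the statement for a general $H$-invariant domain $\Om$ to the already-established case $\Om = \R^N$ of Lemma \ref{ConCpct:2} and Proposition \ref{ConCpct:new} by the zero-extension device. First I would fix the standing convention, used throughout the excerpt, that a function $u_n \in \mathcal{D}_p(\Om)$ is identified with its extension by zero to all of $\R^N$; this extension is an isometry of $\mathcal{D}_p(\Om)$ into $\mathcal{D}_p(\R^N)$ and it preserves $H$-invariance, so $(u_n)$ becomes a sequence in $\mathcal{D}_p(\R^N)^H$ with $u_n \wra u$ there. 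Likewise, one extends $g$ by zero outside $\Om$; the extended $g$ is a non-negative $H$-invariant element of $\mathcal{H}_p(\R^N)$ (the Hardy inequality only gets easier on a larger domain), and $\overline{\Sigma_g}$ computed in $\R^N$ agrees, outside a Lebesgue-null set, with the one computed in $\overline{\Om}$, so the hypothesis $|\overline{\Sigma_g}| = 0$ is inherited. The measures $\nu_n, \Gamma_n, \gamma_n, \widetilde\Gamma_n, \widetilde\gamma_n$ and the quantities $\nu_\infty, \Gamma_\infty, \gamma_\infty$ built from the extended sequence coincide with those built intrinsically on $\Om$ (all the integrands vanish off $\Om$), and $\mu_1(g,\Om) = \mu_1(g,\R^N)$ for the extended $g$.

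The one genuinely new point is that the concentration functions must be matched: one has to check that $\E_{g,\mu}(\Om \cap B_r(x)) = \E_{g,\mu}(B_r(x))$ for the zero-extended $g$ (and similarly with the $H$-superscript and with $B_R^c$ in place of $B_r(x)$), because $\mathcal{D}_p(\Om \cap B_r(x))$ zero-extends isometrically onto $\mathcal{D}_p(B_r(x))$ with the same functional $J_{g,\mu}$, using that $g \equiv 0$ off $\Om$. Granting this, $\C_{g,\mu}^{H}(x)$, $\C_{g,\mu}^{H,*}(\Om)$ and $\C_{g,\mu}^H(\infty)$ as defined for $\Om$ equal the corresponding $\R^N$-quantities for the extended data, and likewise $\E_{g,\mu}^H(\Om) = \E_{g,\mu}^H(\R^N)$. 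Also, because $u_n$ and $u$ vanish off $\Om$, the atoms $x_j$ of $\nu$ lie in $\overline{\Om}$ and $\gamma$ is supported on $\overline{\Sigma_g} \subset \overline{\Om}$, which is exactly what items $(a)$ and $(b)$ assert with the refinement $x_j \in \overline{\Om}$.

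With these identifications in hand, items $(a)$--$(h)$ are literally Proposition \ref{ConCpct:1}$(ii)$--$(iii)$ and Lemma \ref{ConCpct:2}$(a)$--$(f)$ applied to the extended sequence and potential, rewritten with $\int_\Om$ in place of $\int_{\R^N}$ (legitimate since the integrands are supported in $\Om$), and item $(i)$ is Proposition \ref{ConCpct:new} applied verbatim. I would therefore present the proof as: (1) set up the zero-extension and record that it is an $H$-equivariant isometry preserving all the relevant data; (2) prove the concentration-function identities $\E_{g,\mu}^{(H)}(\Om \cap B_r(x)) = \E_{g,\mu}^{(H)}(B_r(x))$ and the analogue at infinity; (3) invoke Lemma \ref{ConCpct:2} and Propositions \ref{ConCpct:1}, \ref{ConCpct:new}. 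The main obstacle — and it is a mild one — is bookkeeping in step (2): one must be a little careful that the infimum defining $\E_{g,\mu}$ on $\Om \cap B_r(x)$ really is unchanged under zero extension, i.e.\ that test functions supported in $B_r(x)$ but not in $\Om$ cannot lower the quotient; this is immediate because such a function, restricted to $\Om \cap B_r(x)$ and zero-extended, has no larger Dirichlet energy, the same $\int g|\cdot|^p$ (as $g=0$ off $\Om$), and no larger $L^{p^*}$ norm — actually one should phrase it the other way, noting $C_c^\infty(\Om\cap B_r(x)) \subset C_c^\infty(B_r(x))$ gives one inequality trivially and the zero-extension argument gives the reverse, so the two infima agree. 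Everything else is a transcription.
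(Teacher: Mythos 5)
Your overall strategy is the same as the paper's: identify $\mathcal{D}_p(\Om)$ with its zero-extension inside $\mathcal{D}_p(\R^N)$, observe that $\nu,\Gamma,\gamma$ and the quantities at infinity are unchanged, and then quote Proposition \ref{ConCpct:1}, Proposition \ref{ConCpct:new} and Lemma \ref{ConCpct:2}. The paper's own proof is exactly this three-line reduction, asserting $\C_{g,\mu}^{H,*}(\Om)=\C_{g,\mu}^{H,*}(\R^N)$ without detail. You correctly isolate the matching of the concentration functions as the one point needing an argument, but the argument you propose for it does not work. Restricting a $\phi\in {\rm C}_c^{\infty}(B_r(x))$ to $\Om\cap B_r(x)$ and extending by zero does not produce an element of $\mathcal{D}_p(\Om\cap B_r(x))$: the truncation across $\partial\Om$ destroys the Sobolev regularity, so this map is not available. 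Moreover, even formally, your comparison of quotients is not monotone in the direction you need: the restriction has \emph{no larger} $L^{p^*}$ norm, which makes the Rayleigh-type quotient \emph{larger}, not smaller, so the inequality $\E_{g,\mu}(\Om\cap B_r(x))\le \E_{g,\mu}(B_r(x))$ (the nontrivial direction, since the trivial inclusion of test classes gives only $\E_{g,\mu}(B_r(x))\le\E_{g,\mu}(\Om\cap B_r(x))$) is not established. This matters: quoting Lemma \ref{ConCpct:2}(a) on $\R^N$ verbatim yields item $(c)$ with the \emph{smaller} constant $\C_{g,\mu}^{H,*}(\R^N)$, which is weaker than what the corollary claims. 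The same issue arises for item $(i)$, where Proposition \ref{ConCpct:new} is stated with $\E_{g,\mu}^H(\R^N)$ but the corollary's hypothesis involves $\E_{g,\mu}^H(\Om)\ge\E_{g,\mu}^H(\R^N)$.

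The correct repair does not require the two infima to be equal. In the proofs of Lemma \ref{ConCpct:2}(a) and Proposition \ref{ConCpct:new}, the test function is always of the form $(u_n-u)\phi$ (resp.\ $u_n\phi$), and since $u_n$ and $u$ vanish off $\Om$, this function automatically lies in $\mathcal{D}_p(\Om\cap H(B_r(x)))^H$ (resp.\ $\mathcal{D}_p(\Om)^H$). Hence the Rayleigh quotient in \eqref{1ststep} and in the first display of the proof of Lemma \ref{ConCpct:2}(a) dominates the \emph{intrinsic} constants $\E_{g,\mu}^H(\Om\cap H(B_r(x)))$ and $\E_{g,\mu}^H(\Om)$ directly, and the rest of those proofs goes through word for word with $\C_{g,\mu}^{H,*}(\Om)$, $\C_{g,\mu}^{H}(\infty)$ and $\E_{g,\mu}^H(\Om)$ in place of the $\R^N$ quantities. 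If you rephrase step (2) of your plan this way — rerun the two proofs with the $\Om$-intrinsic energy levels rather than trying to prove the levels coincide — the remainder of your proposal (items $(a)$, $(b)$, $(e)$--$(g)$ are pure transcriptions since all integrands vanish off $\Om$) is correct and complete.
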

\begin{proof}
Recall that any function $u \in \mathcal{D}_p(\Om)$ can be considered as a $\mathcal{D}_p(\R^N)$ function whenever convenient. Since $g, (u_n), u$ are supported inside $\Om$, it is not difficult to see that $(i)$ $\nu, \Gamma =0$ outside $\overline{\Om}$, $(ii)$ $\C_{g,\mu}^{H,*}(\Om)=\C_{g,\mu}^{H,*}(\R^N)$. Hence the corollary follows as a consequence of Proposition \ref{ConCpct:1}, Proposition \ref{ConCpct:new}, and Lemma \ref{ConCpct:2}. 
\end{proof}

\begin{rmk} \label{bhul1} \rm



$(i)$  It follows from Corollary \ref{Cor_ConCpct}-$(c)$ that $  \E_{g,\mu}^{H}(\Om) \norm{\nu}^{\frac{p}{p^*}} + \mu \norm{\gamma} \leq  \norm{\Gamma_{\overline{\sum_g} \cup F_{\mathbb{J}}}} $. Now, if $  \E_{g,\mu}^{H}(\Om) \norm{\nu}^{\frac{p}{p^*}} < \norm{\zeta_\mu} $ ($\zeta_\mu$ is as in Proposition \ref{ConCpct:new}), then the previous inequality has to be strict. Therefore, if $  \E_{g,\mu}^{H}(\Om) \norm{\nu}^{\frac{p}{p^*}} + \mu \norm{\gamma} = \norm{\Gamma_{\overline{\sum_g} \cup F_{\mathbb{J}}}} $, then $  \E_{g,\mu}^{H}(\Om) \norm{\nu}^{\frac{p}{p^*}} = \norm{\zeta_\mu} $.

$(ii)$ Let $H$ be an infinite closed subgroup on $\mathcal{O}(N)$ and $0 \notin \overline{\Om}$. Assume all the hypothesis of Proposition \ref{ConCpct:new} (or Corollary \ref{Cor_ConCpct}-$(i)$). Then, it follows that $\nu=0.$ 

$(iii)$ Observe that, for a bounded sequence $(u_n)$ in $\Dp$, the measure $\nu$ in the above corollary helps us to determine whether $u_n \ra u$ in $L^{p^*}(\Om)$ or not.  
Precisely, if $\nu=0=\nu_{\infty}$, then $u_n \ra u$ in $L^{p^*}(\Om)$.
\end{rmk}
  

\section{Subcritical Potentials} \label{thmproof}
This section is devoted to prove Theorem \ref{mainthm} and Theorem \ref{perfect}. We bring up several important consequences of these theorems, and provide a few examples of subcritical and non-subcritical potentials.
Let us commence with the following proposition.
\begin{proposition} \label{req_proposition}
 Let $g \in \mathcal{H}_p(\Om)$ be such that $ g^{-} \in \mathcal{F}_p(\Om).$ Then $\C_{g,\mu}^*(\Om)=\C_{g^+,\mu}^*(\Om)$ and $\C_{g,\mu}(\infty)=\C_{g^+,\mu}(\infty)$ for all $\mu \in (0,\mu_1(g)).$
\end{proposition}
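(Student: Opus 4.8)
The plan is to show the two functionals $J_{g,\mu}$ and $J_{g^+,\mu}$ have the same infimum over the relevant small balls (resp.\ complements of large balls), because the negative part $g^-$ contributes nothing in the limit. Concretely, fix $\mu\in(0,\mu_1(g))$; since $g^-\in\mathcal F_p(\Om)$, Proposition \ref{pastthm} tells us the map $u\mapsto\int_\Om g^-|u|^p\dx$ is compact on $\Dp$, and therefore the ``localized mass'' $\sup\{\int_\Om g^-|u|^p\dx : u\in\mathbb S_p(\Om\cap B_r(x))\}$ and its analogue on $\Om\cap B_R^c$ tend to $0$ as $r\to 0$ (resp.\ $R\to\infty$). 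This is the key quantitative input, and I would isolate it as the first step: a compact potential has vanishingly small relative Hardy constant on shrinking balls and on neighbourhoods of infinity. (One way to see this: if not, pick $u_n\in\mathbb S_p(\Om\cap B_{r_n}(x))$ with $r_n\to 0$ and $\int g^-|u_n|^p\ge c>0$; normalize so $\|u_n\|_{\D_p}$ is bounded, extract a weak limit $u$, which must be $0$ since the supports shrink to a point of measure zero, and then compactness forces $\int g^-|u_n|^p\to 0$, a contradiction. The infinity case is similar using $\Phi_R$.)

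Granting that, the second step is a direct squeezing estimate. For any $u\in\mathbb S_p(\Om\cap B_r(x))$ we have
\begin{align*}
J_{g^+,\mu}(u)=\int_\Om\big[|\nabla u|^p-\mu g^+|u|^p\big]\dx
= J_{g,\mu}(u)-\mu\int_\Om g^-|u|^p\dx,
\end{align*}
so $J_{g^+,\mu}(u)\le J_{g,\mu}(u)\le J_{g^+,\mu}(u)+\mu\,\delta_r$, where $\delta_r:=\sup\{\int_\Om g^-|u|^p : u\in\mathbb S_p(\Om\cap B_r(x))\}\to 0$. Taking infima over $\mathbb S_p(\Om\cap B_r(x))$ gives
\begin{align*}
\E_{g^+,\mu}(\Om\cap B_r(x))\le \E_{g,\mu}(\Om\cap B_r(x))\le \E_{g^+,\mu}(\Om\cap B_r(x))+\mu\,\delta_r ,
\end{align*}
and letting $r\to 0$ yields $\C_{g,\mu}(x)=\C_{g^+,\mu}(x)$ for every $x\in\overline\Om$; taking the infimum over $\overline\Om$ gives $\C_{g,\mu}^*(\Om)=\C_{g^+,\mu}^*(\Om)$. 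The identical argument with $\Om\cap B_R^c$ in place of $\Om\cap B_r(x)$ and $R\to\infty$ gives $\C_{g,\mu}(\infty)=\C_{g^+,\mu}(\infty)$.

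One technical point to check along the way is that the two quasi-norms $\|\cdot\|_{\D_p,\mu}$ built from $g$ and from $g^+$ are both genuine quasi-norms equivalent to $\|\cdot\|_{\D_p}$ on the subdomains in question, so that $\E_{g,\mu}$ and $\E_{g^+,\mu}$ are finite and the functionals are nonnegative there; this follows from $\mu<\mu_1(g)\le\mu_1(g^+)$ and monotonicity of $\mu_1$ under restriction to subdomains, which I would note briefly. I expect the main obstacle to be Step 1 --- rigorously establishing $\delta_r\to 0$ (and its infinity analogue) purely from the compactness of $G_p^-$ rather than from, say, an $L^{N/p}$-type smallness; the contradiction/weak-limit argument sketched above is the cleanest route, and it relies on the fact that a weak limit supported on a set of measure zero is zero together with the compactness from Proposition \ref{pastthm}. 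Everything else is elementary squeezing and passing to the limit.
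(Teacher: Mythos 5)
Your overall route is the paper's: on shrinking balls (resp.\ complements of large balls) the $g^-$ term is negligible by compactness of $u\mapsto\int_\Om g^-|u|^p\dx$ (Proposition \ref{pastthm}), because the relevant test functions are bounded in $\Dp$ and converge weakly to $0$, and then a squeeze identifies the two concentration functions. The one place where your write-up overshoots is Step 1 as literally stated: you define $\delta_r$ as a supremum over \emph{all} of $\mathbb{S}_p(\Om\cap B_r(x))$, a set on which $\|u\|_{\D_p}$ is unbounded, and compactness of $G_p^-$ only controls sequences that are bounded in $\Dp$. Your own contradiction argument shows the crack: after you ``normalize so $\|u_n\|_{\D_p}$ is bounded'' the quantity $\int_\Om g^-|u_n|^p\dx$ gets rescaled by $\|u_n\|_{\D_p}^{-p}$, so the lower bound $c$ is lost whenever $\|u_n\|_{\D_p}\to\infty$, and no contradiction results. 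For a general $g^-\in\F_p(\Om)$ (not assumed in $L^{N/p}_{loc}$), I see no reason why $\delta_r$ should even be finite, let alone tend to $0$.

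The repair is small and brings you back to the paper's actual argument: you do not need uniform smallness over the whole sphere, only along near-minimizers of $J_{g^+,\mu}$ on $\Om\cap B_r(x)$. For such $u$ one has $J_{g^+,\mu}(u)\le \E_{g^+,\mu}(\Om\cap B_r(x))+1\le \E_{{\bf 0},\mu}(\Om)+1$, and since $\mu<\mu_1(g)\le\mu_1(g^+)$ the quasi-norm equivalence gives a bound on $\|u\|_{\D_p}$ that is uniform in $r$. Running your weak-limit/compactness argument along a diagonal sequence of such near-minimizers (which converges weakly to $0$ because the supports shrink to $\{x\}$) yields $\int_\Om g^-|u_r|^p\dx\to 0$, and then
\begin{align*}
\C_{g^+,\mu}(x)=\lim_{r\to 0}J_{g^+,\mu}(u_r)=\lim_{r\to 0}\Big[J_{g,\mu}(u_r)-\mu\int_\Om g^-|u_r|^p\dx\Big]\ge \C_{g,\mu}(x),
\end{align*}
while $J_{g,\mu}\ge J_{g^+,\mu}$ gives the reverse inequality for free. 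The same modification handles the case at infinity. With that adjustment your proof coincides with the one in the paper.
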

\begin{proof}
Let us fix $x \in \overline{\Om}$ and $u_n \in \mathbb{S}_p(\Om \cap B_{\frac{1}{n}}(x))$ be such that 
$$\C_{g^+,\mu}(x)=\displaystyle \lim_{n \ra \infty} \int_{\Om} [|\nabla u_n|^p - \mu g^+|u_n|^p] \dx   \,.$$
Since $\mu \in (0,\mu_1(g))$, the quasi-norm
$\|u\|_{\D_p,\mu}:= \left(\displaystyle\int_{\Om} [|\nabla u|^p - \mu g^+ |u|^p] \dx \right)^{\frac{1}{p}}$ is equivalent to the norm $\|.\|_{\D_p}$ on $\Dp.$ Thus, $(u_n)$ is bounded in $\Dp$. Furthermore, the supports of $(u_n)$ converges to the singleton set $\{x\}$.  
Consequently, 
$u_n \wra 0$ in $\mathcal{D}_p(\Om)$ (up to a subsequence). Now, since $g^- \in \mathcal{F}_p(\Om)$, we have
$\displaystyle \lim_{n \ra \infty} \int_{\Om} g^-|u_n|^p \dx=0$ (by Proposition \ref{pastthm}). Hence,
\begin{align*}
    \C_{g^+,\mu}(x) &=\displaystyle \lim_{n \ra \infty} \int_{\Om} [|\nabla u_n|^p - \mu g^+|u_n|^p] \dx + \lim_{n \ra \infty} \int_{\Om} \mu g^{-} |u_n|^p \dx \\
    & =\displaystyle \lim_{n \ra \infty} \int_{\Om} [|\nabla u_n|^p - \mu g|u_n|^p] \dx \\ & \geq \C_{g,\mu}(x) \,.
\end{align*}
The other way inequality holds trivially. Hence
$\C_{g,\mu}(x) = \C_{g^+,\mu} (x)$. Since $x \in \overline{\Om}$ is arbitrary, we prove that $\C_{g,\mu}^*(\Om) = \C_{g^+,\mu}^* (\Om)$. The other assertion follows from the similar set of arguments.
\end{proof}

\noi {\bf{Proof of Theorem \ref{mainthm} :}} We choose $H=\{Id_{\R^N}\}$ in Corollary \ref{Cor_ConCpct}. Then 
$\mathcal{D}_p(\Om)^H=\mathcal{D}_p(\Om)$, and $\mathbb{S}_p(\Om)^H=\mathbb{S}_p(\Om).$
Let $u_n \in \mathbb{S}_p(\Om)$ be a minimizing sequence of  $\E_{g,\mu}(\Om).$ Since $\mu \in (0,\mu_1(g))$, the quasi-norm
$\|u\|_{\D_p,\mu}:= \left(\displaystyle\int_{\Om} [|\nabla u|^p - \mu g |u|^p] \dx \right)^{\frac{1}{p}}$ is equivalent to the norm $\|.\|_{\D_p}$ on $\Dp.$  
Consequently, $(u_n)$ is bounded in $\Dp$, which implies 
$u_n \wra u$ in $\mathcal{D}_p(\Om)$ (up to a subsequence). Now, we use Proposition \ref{req_proposition}, and Corollary \ref{Cor_ConCpct} to obtain the following:
\begin{eqnarray*}
 \E_{g,\mu}(\Om) &=& \lim_{n \ra \infty} \int_{\Om} [|\nabla u_n|^p-\mu g |u_n|^p ] \dx   \\
      &\geq & \int_{\Om} \left[|\nabla u|^p-\mu g |u|^p \right] \dx +\C_{g,\mu}^{*}(\Om) \|\nu \|^{\frac{p}{p^{*}}} 
      +\C_{g,\mu}(\infty) \nu_{\infty}^{\frac{p}{p^{*}}} \\
      &\geq& \E_{g,\mu}(\Om) \left[\int_{\Om} |u|^{p^{*}} \dx \right]^{\frac{p}{p^{*}}} + \C_{g,\mu}^{*}(\Om) \|\nu \|^{\frac{p}{p^{*}}} 
      +\C_{g,\mu}(\infty) \nu_{\infty}^{\frac{p}{p^{*}}} \,.
\end{eqnarray*}
If one of $\|\nu \|$ or $\nu_{\infty}$ is non-zero, then using the hypothesis that $\E_{g,\mu}(\Om)<\C_{g,\mu}^{*}(\Om)$ and  $\E_{g,\mu}(\Om)<\C_{g,\mu}(\infty)$ (i.e., $g$ is subcritical in $\Om$ and at infinity), we infer 
$$\E_{g,\mu}(\Om) >\E_{g,\mu}(\Om) \left(\int_{\Om} |u|^{p^{*}} \dx + \norm{\nu} + \nu_{\infty}\right)^{\frac{p}{p^{*}}}.$$
By $(e)$ of Corollary \ref{Cor_ConCpct}, $\displaystyle \int_{\Om} |u|^{p^{*}} \dx + \norm{\nu} + \nu_{\infty}=\lim_{n \ra \infty} \|u_n\|_{p^*}^{p^*}=1$ (as $u_n \in \mathbb{S}_p(\Om)$). Hence, 
$\E_{g,\mu}(\Om)>\E_{g,\mu}(\Om)$, a contradiction. Therefore, $\|\nu \| =0=\nu_{\infty}$. As a consequence $\|u\|_{p^{*}}=1$.
Hence, $\E_{g,\mu}(\Om)$ is attained at $u$. Thus, $v=[\E_{g,\mu}(\Om)]^{\frac{1}{p^{*}-p}}u$ is a non-trivial solution to \eqref{Critical}. Notice that, if $u$ is a minimizer of $\E_{g,\mu}(\Om)$, then $|u|$ is also so.
Thus, we prove that there exists a non-negative, non-trivial solution $v$ of \eqref{Critical}. Therefore, we have
$$-\De_p v + \mu g^-v^{p-1} = \mu g^+v^{p-1} \geq 0,$$
in distribution sense, and Lemma \ref{strongmax} ensures that $v$ is positive.

\begin{rmk} \label{miniexist} \rm
The above proof gives not only the existence of a positive solution of \eqref{Critical} but also it assures that $\E_{g,\mu}(\Om)$ is attained at some positive $u \in \Dp.$ \end{rmk}

In the next proposition, we prove a particular case of Theorem \ref{perfect}.
\begin{proposition} \label{constants}
 \rm 
Let $N \geq p^2$ and $\Om$ be a bounded domain in $\R^N.$ Then any positive constant function $g$ is  sub-critical in $\Om$ as well as at infinity for all $\mu \in (0,\mu_1(g))$.
\end{proposition}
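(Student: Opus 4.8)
The plan is to establish the two required strict inequalities separately: sub-criticality at infinity is essentially immediate from boundedness of $\Om$, while sub-criticality in $\Om$ reduces to the classical Br\'ezis--Nirenberg estimate.

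\emph{Sub-criticality at infinity.} Since $\Om$ is bounded, there is $R_0>0$ with $\Om\cap B_R^c=\emptyset$ for every $R\ge R_0$; with the convention $\inf\emptyset=+\infty$ this gives $\E_{g,\mu}(\Om\cap B_R^c)=+\infty$ for $R\ge R_0$, hence $\C_{g,\mu}(\infty)=+\infty$. On the other hand $\E_{g,\mu}(\Om)\le J_{g,\mu}(u)<\infty$ for any fixed $u\in\mathbb{S}_p(\Om)$ (again using that $\Om$ is bounded, so $\D_p(\Om)\embd L^p(\Om)$). Thus $\E_{g,\mu}(\Om)<\C_{g,\mu}(\infty)$.

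\emph{Sub-criticality in $\Om$.} Write $c:=g>0$, let $S:=\inf\{\int_{\R^N}|\nabla u|^p\dx:u\in\D_p(\R^N),\ \norm{u}_{p^*}=1\}$ be the best Sobolev constant, and let $\la_1$ denote the first eigenvalue of $-\Delta_p$ on $\Om$. First I would prove $\C_{g,\mu}^*(\Om)\ge S$: fix $x\in\overline\Om$ and $u\in\mathbb{S}_p(\Om\cap B_r(x))$; extended by zero, $u$ is supported in $\overline{B_r(x)}$, so the Sobolev inequality gives $\int|\nabla u|^p\dx\ge S$, while H\"older's inequality (using $\tfrac1p=\tfrac1{p^*}+\tfrac1N$) gives $\int|u|^p\dx\le|B_r(x)|^{p/N}=\omega_N^{p/N}r^p$. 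Hence $J_{g,\mu}(u)\ge S-\mu c\,\omega_N^{p/N}r^p$, so $\E_{g,\mu}(\Om\cap B_r(x))\ge S-\mu c\,\omega_N^{p/N}r^p$; letting $r\to0$ yields $\C_{g,\mu}(x)\ge S$ for every $x\in\overline\Om$, hence $\C_{g,\mu}^*(\Om)\ge S$. Next, since $\mu_1(g)=\la_1/c$, the hypothesis $\mu\in(0,\mu_1(g))$ says precisely $\mu c\in(0,\la_1)$, and $\E_{g,\mu}(\Om)=\inf\{\int_\Om|\nabla u|^p\dx-\mu c\int_\Om|u|^p\dx:\norm{u}_{p^*}=1\}$ is exactly the Br\'ezis--Nirenberg minimum with parameter $\mu c$ on the bounded domain $\Om$. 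By \cite[Theorem 1.2]{Nirenberg} (for $p=2$) and \cite[Theorem 4.2]{Clement} (for general $p$) --- valid precisely when $N\ge p^2$ --- this minimum is strictly less than $S$. Combining, $\E_{g,\mu}(\Om)<S\le\C_{g,\mu}^*(\Om)$, so $g$ is sub-critical in $\Om$.

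The only substantive input is the strict inequality $\E_{g,\mu}(\Om)<S$, which I would simply quote. If a self-contained proof were desired, the work lies in the sharp asymptotics of truncated Aubin--Talenti bubbles $u_\eps$ concentrating at an interior point of $\Om$: one needs $\int|\nabla u_\eps|^p\dx=K_1+O(\eps^{(N-p)/(p-1)})$, $\int|u_\eps|^{p^*}\dx=K_2+O(\eps^{N/(p-1)})$, and $\int|u_\eps|^p\dx\ge c_1\eps^p$ when $N>p^2$ (resp.\ $\ge c_1\eps^p\log(1/\eps)$ when $N=p^2$) with $c_1>0$, so that for small $\eps$ the negative term $-\mu c\int|u_\eps|^p\dx$ dominates the truncation error $O(\eps^{(N-p)/(p-1)})$. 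The borderline case $N=p^2$ --- where $(N-p)/(p-1)=p$ and the logarithm is exactly what is needed --- is why the hypothesis $N\ge p^2$ appears and is sharp, matching the Remark after Theorem~\ref{perfect}.
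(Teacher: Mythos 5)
Your proof is correct and follows essentially the same strategy as the paper's: reduce sub-criticality in $\Om$ to the chain $\E_{g,\mu}(\Om)<S\le\C_{g,\mu}^*(\Om)$, quote the classical Br\'ezis--Nirenberg/Garc\'ia Azorero--Peral strict inequality $\E_{g,\mu}(\Om)<S$ for $N\ge p^2$, and dispose of infinity using boundedness of $\Om$. The only (harmless) difference is in proving $\C_{g,\mu}(x)\ge S$: the paper takes a minimizing sequence with shrinking supports and invokes Rellich--Kondrachov to kill the $L^p$ term, whereas your direct H\"older bound $\int|u|^p\dx\le\omega_N^{p/N}r^p$ is more elementary and quantitative; both are valid.
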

\begin{proof}
Without loss of generality, we consider the constant function ${\bf{1}}(z)= 1$; $z \in \Om$ and prove that ${\bf{1}}$ is sub-critical in $\Om$ and at infinity. Notice that $\mu_1({\bf{1}})=\la_1$, where $\la_1$ is the first Dirichlet eigenvalue of $\De_p$. Fix $\mu \in (0,\la_1)$.  For $x \in \overline{\Om}$, there exists $v_{n} \in \mathbb{S}_p(\Om \cap B_{\frac{1}{n}}(x))$ (by definition of $\E_{{\bf{1}},\mu}(\Om \cap B_{\frac{1}{n}}(x))$) such that
\begin{align} \label{for_constant}
    \int_{\Om} [|\nabla v_n|^p - \mu |v_n|^p] \dx < \E_{{\bf{1}},\mu}(\Om \cap B_{\frac{1}{n}}(x)) + \frac{1}{n} \leq \E_{{\bf{0}},\mu}(\Om \cap B_{\frac{1}{n}}(x)) + 1 = \E_{{\bf{0}},\mu}(\Om) + 1.
\end{align}
The last equality follows from the fact that $\E_{{\bf{0}},\mu}(.) $ does not depend on the domain.
From \eqref{for_constant}, we have
\begin{eqnarray} \label{ab}
 \E_{{\bf{0}},\mu}(\Om) - \mu  \int_{\Om}  |v_n|^p \dx < \E_{{\bf{1}},\mu}(\Om \cap B_{\frac{1}{n}}(x)) + \frac{1}{n}.
\end{eqnarray}
Since $\mu \in (0,\la_1)$, it follows from \eqref{for_constant} that $(v_n)$ is bounded in $\Dp$. Further, their supports are shrinking to a null set, namely $\{x\}$ as $n \ra \infty$. This implies, $v_n \wra 0$ in $\Dp$, and the Rellich-Kondrachov compactness theorem assures that $v_n \ra 0$ in $L^p(\Om).$ Now, by taking $n \ra \infty$
in the above inequality \eqref{ab}, we obtain $\E_{{\bf{0}},\mu}(\Om) \leq \C_{{\bf{1}},\mu}(x)$, for each $x \in \overline{\Om}$. This implies $\E_{{\bf{0}},\mu}(\Om) \leq \C_{{\bf{1}},\mu}^{*}(\Om)$. 
Further, for all $\mu \in (0,\la_1)$, we  have $\E_{{\bf{1}},\mu}(\Om) < \E_{{\bf{0}},\mu}(\Om)$ when $N \geq p^2$, see \cite[Lemma 7.1]{AlonsoEstimates}. Hence, for all $\mu \in (0,\la_1)$, ${\bf{1}}$ is sub-critical in $\Om$ when $N \geq p^2$. On the other hand, since $\Om$ is bounded, ${\bf{1}}$ is sub-critical at infinity too. 
\end{proof}

\begin{rmk} \rm
By the above proposition and Theorem \ref{mainthm}, we conclude that \eqref{Critical} with $g \equiv 1$ on a bounded domain in $\R^N$ admits a positive solution for all $\mu \in (0,\la_1)$, provided $N \geq p^2$. Thus the results of Br\'ezis-Nirenberg \cite[Theorem 1.1]{Nirenberg} (for $p=2$) and \cite[Theorem 7.4]{AlonsoEstimates} (for general $p$) follows as a particular case of Theorem \ref{mainthm}.
\end{rmk}

 \begin{example}  \label{criatptsandinf}
 \rm We provide some examples of $g$ that are not subcritical.

 $(i)$ {\bf{A potential critical in $\Om$}}. Let $\Om$ be a domain in $\R^N$. Consider the zero function ${\bf{0}}(z)\equiv 0$ on $\Om.$ Fix $\mu \in  \left(0,\mu_1(0)\right)$.  Recall that
$$\E_{{\bf 0},\mu}(\Om)=\displaystyle\inf_{u \in \mathcal{D}_p(\Om)} \left\{\int_{\Om} |\nabla u|^p \dx: \|u\|_{p^{*}}=1 \right\}.$$
It is not difficult to see that $\E_{{\bf 0},\mu}(.)$ is independent of domain (as $\E_{{\bf 0},\mu}(.)$ is invariant under translations and dilations in $\R^N$), and hence $\C_{{\bf 0},\mu}(x)=\E_{{\bf 0},\mu}(\Om)$ for any $x \in \overline{\Om}$. This implies $\C_{{\bf 0},\mu}^*(\Om)=\E_{{\bf 0},\mu}(\Om)$. Therefore, ${\bf{0}}$ is critical in $\Om$.
 
$(ii)$ {\bf{A potential critical in $\Om$}}. Let $\Om=\Om_k \times \Om_{N-k}$ be as in \eqref{domain} with  $0\in \Om$. Let $g(z)=\frac{1}{|z|^p}$; $z \in \Om$. Fix a $\mu \in  \R$.  Recall that
$$\E_{\frac{1}{|z|^p},\mu}(\Om)=\displaystyle\inf_{u \in \mathcal{D}_p(\Om)} \left\{\int_{\Om} [|\nabla u|^p-\frac{\mu}{|z|^p}] \ \dz: \|u\|_{p^{*}}=1 \right\}.$$
Since $\E_{\frac{1}{|z|^p},\mu}(\Om)$ is invariant under dilation, using the scaling arguments it can be seen that $\C_{\frac{1}{|z|^p},\mu}(0)=\E_{\frac{1}{|z|^p},\mu}(\Om)$. This implies $\C_{\frac{1}{|z|^p},\mu}^*(\Om)=\E_{\frac{1}{|z|^p},\mu}(\Om)$. Therefore, $\frac{1}{|z|^p}$ is critical in $\Om$.

$(iii)$ {\bf{A potential critical at infinity}}. Let $\Om=\Om_k \times \Om_{N-k}$ be as in \eqref{domain} with $k=N,$ $a>0$, and $b=\infty$ i.e., the exterior of a ball $B_a(0)$ in $\R^N$. Consider $g(z)=\frac{1}{|z|^p}$; $z \in \Om$. For each $\epsilon >0$, there exists $w \in \mathbb{S}_p(\Om)$ such that 
 $$ \int_{\Om} [|\nabla w|^p(z)-\mu \frac{|w|^p(z)}{|z|^p}] \ \dz < \E_{\frac{1}{|z|^p},\mu}(\Om) + \epsilon.$$
For $w_R(z)= R^{\frac{p-N}{p}}w(\frac{z}{R})$, one can see that $w_R \in \mathbb{S}_p(\Om \cap B_{aR}^c)$ for $R>1$, and
$$\int_{\Om} [|\nabla w_R|^p(z)-\mu \frac{|w_R|^p(z)}{|z|^p}] \dz=\int_{\Om} [|\nabla w|^p(z)-\mu \frac{|w|^p(z)}{|z|^p}] \dz.$$
This gives $\E_{\frac{1}{|z|^p},\mu}(\Om \cap B_{aR}^c) \leq \E_{\frac{1}{|z|^p},\mu}(\Om)$ for all $R>1$ and hence $\C_{\frac{1}{|z|^p},\mu}(\infty) \leq \E_{\frac{1}{|z|^p},\mu}(\Om)$, and the other way inequality always holds. Thus, $g$ is critical at infinity.
\end{example}

Next, we are going to prove Theorem \ref{perfect}.
\begin{rmk} \label{Req4} \rm
Recall that, for any $\mu \in \R,$ $\E_{0,\mu}(.)$ is invariant under dilation and translation in $\R^N$. 
Hence, $\E_{0,\mu}(\Om)=\C_{0,\mu}^*(\Om)$ i.e., $g \equiv 0$ is critical in $\Om$. In addition, if $\Om$ is bounded and star shaped, using the Pohozev identity, we can show that $\E_{0,\mu}(\Om)$ is not attained in $\Dp$. However, for $\Om=\R^N,$ $\E_{0,\mu}(\R^N)$ is attained by the following radial functions in $\mathcal{D}_p(\R^N)$:
$$\Psi_{\epsilon,x_0}(x)= \epsilon^{-\frac{N}{p^*}} \left(1 + \text{C}(N,p)\left|\frac{x-x_0}{\epsilon}\right|^{\frac{p}{p-1}} \right)^{\frac{p-N}{p}},$$
for any $\epsilon >0$ and $x_0 \in \R^N$ \cite[Corollary I.1]{Lions2a}.
\end{rmk}
Let $g \in \mathcal{F}_p(\Om)$ be such that $g^+ \neq 0$. Then there exists a compact set $\text{K} \subseteq \Om$ with $|\text{K}|>0$ such that $g$ is positive on $\text{K}.$  Furthermore, due to Lusin's theorem, we can assume that $g$ is continuous on $\text{K}$. Thus, \begin{align} \label{g_min}
    g_{min}:=\min \{g(x): x \in \text{K}\} >0.
\end{align} Let $\Phi_{\text{K}} \in \text{C}_c^{\infty}(\Om)$ such that $\Phi_{\text{K}} =1$ on $\text{K}$. For each $y \in \R^N$, we consider $$u_{\epsilon,y}= \Phi_{\text{K}} U_{\epsilon,y}$$ where $U_{\epsilon,y}(x) = \chi_{\epsilon,y}(x)\left(\epsilon + \mbox{C}(N,p) |x-y|^{\frac{p}{p-1}} \right)^{\frac{p-N}{p}}$ and  $\chi_{\epsilon,y}(x) =\chi_{0} \left(\frac{x-y}{\epsilon}\right)$ with
$\chi_{0} \in \text{C}_c^{\infty}(\Om)$ is such that $0 \leq \chi_{0} \leq 1$, $\chi_0 =1$ on $B_1(0)$ and vanishes outside $B_2(0)$.
Next, we list some properties of $u_{\epsilon,y}$.
\begin{lemma} \label{estimate}
Let $g \in \H_p(\Om)$ be such that $g^+ \neq 0.$ For each $\epsilon >0$, $u_{\epsilon,y}$ satisfies the following properties:
 \begin{enumerate}
  \item[(i)] $\|u_{\epsilon,y}\|_{p^*}^p= \|\Psi_{1,y}\|_{p^*}^p \ \epsilon^{\frac{p-N}{p}} + O(1) \,,$
  \item[(ii)] $\|\nabla u_{\epsilon,y} \|_{p}^p= \|\nabla \Psi_{1,y}\|_{p}^p \ \epsilon^{\frac{p-N}{p}}  + O(1)\,,$ 
  \item[(iii)] for $y \in {\rm{K}}$, we have $$\displaystyle \int_{\Om} g |u_{\epsilon,y}|^p \dx \geq \begin{cases}
      \text{A} \ g_{min} \ \epsilon^{\frac{p^2-N}{p}}  + O(1) \quad \ \ \mbox{if} \ p^2 <N \\
      \text{A} \ g_{min} \ |\log (\epsilon)| + O(1) \quad \mbox{if} \ p^2 =N,
    \end{cases} $$ 
 \end{enumerate}     where $A>0$ depends only on $N,p$, and $g_{min}$ is defined as \eqref{g_min}.
\end{lemma}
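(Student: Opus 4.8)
The plan is to reduce every quantity in (i)--(iii) to an explicit integral of the Aubin--Talenti profile $\Psi_{1,y}(x)=(1+C(N,p)|x-y|^{p/(p-1)})^{(p-N)/p}$ over $\R^N$, exactly as in the classical Br\'ezis--Nirenberg instanton estimates. Write $u_{\epsilon,y}=\Phi_{\text{K}}U_{\epsilon,y}$. On a fixed ball around $y$ the cut-offs $\chi_{\epsilon,y}$ and $\Phi_{\text{K}}$ are identically $1$, so there $u_{\epsilon,y}$ coincides with the un-truncated profile $x\mapsto(\epsilon+C|x-y|^{p/(p-1)})^{(p-N)/p}$; outside that ball $U_{\epsilon,y}$ is dominated by a fixed $|x-y|^{p-N}$-type profile, which lies in $L^{p^*}$ and whose gradient lies in $L^p$ on any set bounded away from $y$. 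Hence the ``truncation region'' contributes only $O(1)$ in $\epsilon$ to $\|u_{\epsilon,y}\|_{p^*}^{p^*}$, to $\|\nabla u_{\epsilon,y}\|_p^p$ --- including the cross terms from $\nabla\Phi_{\text{K}}$ and $\nabla\chi_{\epsilon,y}$ --- and to $\int_\Om g|u_{\epsilon,y}|^p\,\dx$, so everything reduces to the corresponding integrals of the un-truncated profile.

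For (i)--(ii) I would compute those leading integrals. The substitution $x=y+\epsilon^{(p-1)/p}z$ --- the natural concentration scale, at which $\epsilon+C|x-y|^{p/(p-1)}=\epsilon(1+C|z|^{p/(p-1)})$ --- rewrites the profile as $\epsilon^{(p-N)/p}\Psi_{1,0}(z)$, so that $\int_{\R^N}(\epsilon+C|x-y|^{p/(p-1)})^{(p-N)p^*/p}\,\dx=\epsilon^{-N/p}\|\Psi_{1,0}\|_{p^*}^{p^*}$ and $\int_{\R^N}|\nabla(\epsilon+C|x-y|^{p/(p-1)})^{(p-N)/p}|^p\,\dx=\epsilon^{(p-N)/p}\|\nabla\Psi_{1,0}\|_p^p$; both ambient integrals converge because $N>p$. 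Adding the $O(1)$ error, using translation invariance $\|\Psi_{1,0}\|_{p^*}=\|\Psi_{1,y}\|_{p^*}$, and taking the power $p/p^*$ in the first identity --- so that $(A\epsilon^{-N/p}+O(1))^{p/p^*}=A^{p/p^*}\epsilon^{(p-N)/p}+O(1)$ with $A^{p/p^*}=\|\Psi_{1,y}\|_{p^*}^p$ --- gives (i); the second identity gives (ii) directly.

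For (iii) only a lower bound is needed. By Lusin's theorem and continuity, $g\ge g_{min}>0$ on $\text{K}$, and (shrinking $\text{K}$ slightly if necessary) I may assume that $\Phi_{\text{K}}\equiv1$ and $g\ge g_{min}$ on a ball $B_{\rho_0}(y)$ whose radius $\rho_0$ is independent of $y\in\text{K}$. Restricting the integral to $B_{\rho_0}(y)$ and discarding the non-negative cut-offs from below, $\int_\Om g|u_{\epsilon,y}|^p\,\dx\ge g_{min}\int_{B_{\rho_0}(y)}(\epsilon+C|x-y|^{p/(p-1)})^{p-N}\,\dx$, and the same substitution turns the last integral into $\epsilon^{(p^2-N)/p}\int_{|z|\le\rho_0\epsilon^{-(p-1)/p}}(1+C|z|^{p/(p-1)})^{p-N}\,\dz$. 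The radial density $(1+C|z|^{p/(p-1)})^{p-N}|z|^{N-1}$ decays like $|z|^{\,p-(N-1)/(p-1)}$ at infinity, which is integrable precisely when $N>p^2$: in that case the truncated integral converges to a positive constant $A=A(N,p)$, giving the $A\,g_{min}\,\epsilon^{(p^2-N)/p}$ bound; when $N=p^2$ the density is $\sim|z|^{-1}$, so the truncated integral is $\sim\log(\rho_0\epsilon^{-(p-1)/p})\sim\tfrac{p-1}{p}|\log\epsilon|$, giving the $A\,g_{min}\,|\log\epsilon|$ bound.

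The step I expect to be the main obstacle is (iii): one must arrange that the region on which $u_{\epsilon,y}$ is still the honest profile and $g$ is still bounded below extends well \emph{past} the concentration scale $\epsilon^{(p-1)/p}$, uniformly in $y\in\text{K}$ --- i.e.\ $\chi_{\epsilon,y}$ must remain $\equiv1$ on a ball large compared with $\epsilon^{(p-1)/p}$ --- since truncating at a radius comparable to $\epsilon^{(p-1)/p}$ would spoil the exponent; and the borderline case $N=p^2$, where the logarithmic rate must be tracked exactly, also needs care. The rest is routine bookkeeping --- verifying that the truncation and cross terms are genuinely $O(1)$ --- and parallels the classical computations in \cite{Nirenberg,AlonsoEstimates,Clement}.
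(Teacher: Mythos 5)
Your parts (i) and (ii) are fine and coincide with the classical instanton computation that the paper simply imports from assertion (7.7) of \cite{AlonsoEstimates}; your identification of the concentration scale $\epsilon^{(p-1)/p}$ and of the borderline exponent at $N=p^2$ is correct, and you are right to flag that the cut-off $\chi_{\epsilon,y}$ must remain $\equiv 1$ on a region large compared with $\epsilon^{(p-1)/p}$ for the stated asymptotics to hold.

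The gap is in (iii), in the sentence ``(shrinking $\mathrm{K}$ slightly if necessary) I may assume that $\Phi_{\mathrm{K}}\equiv 1$ and $g\ge g_{min}$ on a ball $B_{\rho_0}(y)$''. Nothing in the construction of $\mathrm{K}$ gives you a ball: $\mathrm{K}$ is only a compact set of positive measure on which $g>0$, and Lusin's theorem yields continuity of the restriction $g|_{\mathrm{K}}$, not of $g$ on a neighbourhood of $\mathrm{K}$. Since $g$ is merely an $L^1_{loc}$ Hardy potential, $\mathrm{K}$ may well be nowhere dense (take $g=\chi_E-\chi_{E^c}$ with $E$ a fat Cantor set); then every ball $B_{\rho_0}(y)$ contains a set of positive measure, accumulating at $y$, on which $g<0$, and shrinking $\mathrm{K}$ cannot create interior. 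Consequently your key inequality $\int_{B_{\rho_0}(y)}g|u_{\epsilon,y}|^p\dx\ge g_{min}\int_{B_{\rho_0}(y)}(\epsilon+C|x-y|^{p/(p-1)})^{p-N}\dx$ is unjustified, and the negative contribution from $B_{\rho_0}(y)\setminus\mathrm{K}$ is not absorbed by any of your other estimates: the Hardy inequality only bounds it by $O(\|\nabla u_{\epsilon,y}\|_p^p)=O(\epsilon^{(p-N)/p})$, which dominates the target $\epsilon^{(p^2-N)/p}$. The paper circumvents this with a different mechanism: it treats $U_{\epsilon,0}^p/\|U_{\epsilon,0}^p\|_1$ as an approximate identity acting on $g\Phi_{\mathrm{K}}^p\in L^1$ and writes $\int_\Om g|u_{\epsilon,y}|^p\dx=\|U_{\epsilon,0}^p\|_1\,(g\Phi_{\mathrm{K}}^p)(y)+O(1)$, which requires only that $y$ be a Lebesgue point of $g\Phi_{\mathrm{K}}^p$ (true for a.e.\ $y\in\mathrm{K}$, which is all that is used downstream), combined with the known asymptotics of $\|U_{\epsilon,0}^p\|_1$. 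To repair your argument you must replace the pointwise lower bound on a full ball by exactly this Lebesgue-differentiation step, e.g.\ by splitting $B_{\rho_0}(y)$ into $\mathrm{K}$ and its complement and using that $y$ is simultaneously a density point of $\mathrm{K}$ and a Lebesgue point of $|g|\chi_{\mathrm{K}^c}$.
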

\begin{proof}
For a proof of $(i)$ and $(ii)$, we refer to the assertion (7.7) of \cite{AlonsoEstimates} (one can also see \cite[assertion 1.11, 1.12]{Nirenberg} for $p=2$).  To prove $(iii)$, we first recall the following estimate \cite[(c) of 7.7]{AlonsoEstimates} (one can also see \cite[1.13]{Nirenberg} for $p=2$): 
 $$\| U_{\epsilon,0}^p\|_{1}= \begin{cases*}
                                A \epsilon^{\frac{p^2-N}{p}} + O(1), \ \quad     \mbox{ if} \ p^2  <  N,\\
                                A |\log (\epsilon)| + O(1), \quad \mbox{if} \ p^2 = N
                                \end{cases*} $$
$A$ is a positive constant independent of $\epsilon.$ Observe that $u_{\epsilon,y}(x)= \Phi_{\text{K}}(x) U_{\epsilon,y}(x)=\Phi_{\text{K}}(x) U_{\epsilon,0}(x-y)$. By applying $\frac{U_{\epsilon,0}^{p}}{\|U_{\epsilon,0}^p\|_{1}}$ as an approximate identity, we compute the following:
 \begin{eqnarray*}
  \int_{\Om} g(x) |u_{\epsilon,y}(x)|^p \dx &=& \int_{\Om} g(x) \Phi_{\text{K}}(x)^p U_{\epsilon,0}(x-y)^p \dx  \\
   &=& \|U_{\epsilon,0}^p\|_{1} \int_{\Om} (g \Phi_{\text{K}}^p)(x) \frac{U_{\epsilon,0}^p}{\|U_{\epsilon,0}^p\|_{1}} (x-y) \dx \\
   &=&  \|U_{\epsilon,0}^p\|_{1}  (g\Phi_{\text{K}}^p) (y) + O(1) \\
   & \geq & 
   \begin{cases}
      A \epsilon^{\frac{p^2-N}{p}} g_{min} + O(1) \quad \ \ \  \mbox{if} \ p^2 <N \\
      A |\log (\epsilon)| \ g_{min} + O(1) \quad \mbox{if} \ p^2 =N \,.
    \end{cases}
 \end{eqnarray*}
\end{proof}

\noi {\bf{Proof of Theorem \ref{perfect}:}} Let $g \in \mathcal{F}_p(\Om)$. Proposition \ref{pastthm} assures that $G_p$ is compact on $\Dp.$ Fix $\mu \in (0,\mu_1(g))$ and choose $\epsilon >0$. By definition of $\C_{g,\mu}(x)$, there exists $\delta >0$ such that for each $r \in (0,\delta)$, there exists $u_r \in \mathcal{D}_p(\Om \cap B_r(x))$ with $\|u_r\|_{p^*}=1$ satisfying 
$$\int_{\Om} [|\nabla u_r|^p - \mu g |u_r|^p] \dx < \C_{g,\mu}(x) + \epsilon.$$
This yields,
$$\E_{{\bf 0},\mu}(\Om) - \mu \int_{\Om}  g |u_r|^p \dx  < \C_{g,\mu}(x) + \epsilon.$$
Since $\mu \in (0,\mu_1(g))$, it follows that $(u_r) $ is bounded in $\Dp$. Further, their supports are decreasing to a singletone set $\{x\}$. Hence, $u_r \wra 0$ in $\Dp$ as $r \ra 0$, and hence the compactness of $G_p$ implies $\displaystyle \int_{\Om}  g |u_r|^p \dx \ra 0$ as $r \ra 0$. Thus, $\E_{{\bf 0},\mu}(\Om) \leq \C_{g,\mu}(x)$, for any $x \in \overline{\Om}$. This shows that $\E_{{\bf 0},\mu}(\Om) \leq \C_{g,\mu}^*(\Om)$. 
By the similar reasoning we conclude $ \E_{{\bf 0},\mu}(\Om) \leq \C_{g,\mu}(\infty)$. Thus, in order to show that
$g$ is subcritical in $\Om$ and at infinity, it is enough to show that $\E_{g,\mu}(\Om) < \E_{{\bf 0},\mu}(\Om)$.
To establish this strict inequality, we recall the functions $u_{\epsilon,y} \in \text{C}_c^{\infty}(\Om)$ (for $\epsilon>0$ small) in Lemma \ref{estimate} and get the following estimate:
\begin{eqnarray*}
 Q_g(u_{\epsilon,y}) &:=&  \frac{\displaystyle \int_{\Om} |\nabla u_{\epsilon,y}|^p \dx - \mu \displaystyle \int_{\Om} g |u_{\epsilon,y}|^p \dx}{[\int_{\Om} |u_{\epsilon,y}|^{p^*}\dx]^{\frac{p}{p^*}}} \\
 &\leq&
\begin{cases*}
 \E_{{\bf 0},\mu}(\Om) + O(\epsilon^{\frac{N-p}{p}}) - \text{A} \ g_{min} \ \epsilon^{p-1}, \quad \quad \quad \quad \  \mbox{if} \ p^2 <N \\
 \E_{{\bf 0},\mu}(\Om) + O(\epsilon^{\frac{N-p}{p}}) - \text{A} \ g_{min} \ \epsilon^{\frac{N-p}{p}}  |\log (\epsilon)|, \quad \mbox{if} \ p^2 =N,
\end{cases*}
\end{eqnarray*}
This implies $Q_g(u_{\epsilon,y}) < \E_{{\bf 0},\mu}(\Om)$ for sufficiently small $\epsilon$. By taking $w_{\epsilon}= \frac{u_{\epsilon,y}}{\|u_{\epsilon,y}\|_{p*}}$ we have 
$$\E_{g,\mu}(\Om) \leq \int_{\Om} |\nabla w_{\epsilon}|^p \dx - \mu \int_{\Om} g |w_{\epsilon}|^p \dx = Q_g(u_{\epsilon,y}) < \E_{{\bf 0},\mu}(\Om).$$
This completes our proof.

In the following remark, we exhibit certain classical spaces that are contained in $\F_p(\Om).$
\begin{rmk} \label{Subspace_Fp}
\rm
For $p=2$ and $\Om$ bounded,    $L^r(\Om)\subseteq \F_p(\Om)$ with $r>\frac{N}{2}$  \cite{Manes-Micheletti}, $r=\frac{N}{2}$ \cite{Allegretto}. For $p\in (1,\infty)$ and for general domain $\Om,$ $ L^{\frac{N}{p},d}(\Om) \subseteq \F_p(\Om)$ with $d<\infty$, in \cite{Visciglia}. Furthermore, a larger space $\overline{\text{C}_c^{\infty}(\Om)}$ in $L^{\frac{N}{p},\infty}(\Om)$ is contained in $\F_p(\Om)$ \cite[for $p=2$]{AMM} and \cite[for $p\in (1,N)$]{anoop-p}. For $g\in L^1_{loc}(\Om)$, we consider \[\tilde{g}(r)= {\rm ess}\sup \{|g(y)|: |y|=r \}, \ r > 0,\]
  where the essential supremum is taken with respect to $(N-1)$ dimensional surface measure. 
  Let $I_p(\Om)= \big\{ g\in L^1_{loc}(\Om) : \tilde{g} \in L^{1}((0, \infty),r^{p-1}\dr) \big\}$. In \cite{ADS-exterior}, authors showed that $ I_p(\overline{B}_1^c)\subseteq \F_p(\overline{B}_1^c)$ for $p \in (1,N)$.
\end{rmk}

\section{Critical Potentials} \label{Cric_poten}
In this section, we prove Theorem \ref{symmetricsol} and Theorem \ref{atorigin}. First we observe that, for a Hardy potential $g \in \mathcal{H}_p(\Om)$ which is not sub-critical in $\Om$ or  at infinity, one of the following cases occur: 
\begin{enumerate}[(i)]
    \item $g$ is $H$-subcritical in $\Om$ as well at infinity,
    \item $g$ is $H$-critical in $\Om$ but $H$-subcritical at infinity,
     \item $g$ is $H$-critical at infinity but $H$-subcritical in $\Om$,
     \item $g$ is $H$-critical in $\Om$ as well as at infinity,
\end{enumerate}
for a closed subgroup $H$ of $\mathcal{O}(N).$  Theorem \ref{trivial} deals with the case (i), while Theorem \ref{symmetricsol} and Theorem \ref{atorigin} address the rest of the three cases.  

\noi {\bf{Proof of Theorem \ref{trivial} :}} Let $H$, $\Om$, and $g$ satisfy \ref{H1} and \ref{H2}. If $g$ is $H$-subcritical in $\Om$ and at infinity, then one can repeat similar arguments as in Theorem \ref{mainthm} to show that $\E_{g,\mu}^H(\Om)$ is attained in $ \Dp^H$. Further, the principle of symmetric criticality and strong maximum principle can be applied to ensure that 
\eqref{Critical} admits a positive solution.

Next we prove Theorem \ref{symmetricsol}.

\noi {\bf{Proof of Theorem \ref{symmetricsol} :}}  Let $H$, $\Om$, and $g$ satisfy \ref{H1} and \ref{H2}.
Let $(u_n)$ be a minimising sequence
of $\E_{g,\mu}^{H}(\Om)$ i.e., $ u_n \in \mathcal{D}_{p}(\Om)^{H}$ with $\|u_n\|_{p^*}=1$ and
$$\int_{\Om} [|\nabla u_n|^p - \mu g |u_n|^p] \dx \ra \E_{g,\mu}^{H}(\Om), \ \mbox{as} \ n \ra \infty.$$
Since $\mu \in (0,\mu_1(g))$, we have $(u_n)$ is bounded in
$\mathcal{D}_{p}(\Om)^{H}.$ Hence, $u_n \wra u$ in $\mathcal{D}_{p}(\Om)^{H}$ (upto a subsequence). Now, we use Corollary \ref{Cor_ConCpct} to obtain
\begin{eqnarray}
 \E_{g,\mu}^{H}(\Om) &=& \lim_{n \ra \infty} \int_{\Om} [|\nabla u_n|^p - \mu g |u_n|^p] \dx  \nonumber \\
              &\geq&  \int_{\Om} [|\nabla u|^p - \mu g |u|^p] \dx + \C_{g,\mu}^{H,*}(\Om) \|\nu \|^{\frac{p}{p^*}} + \C_{g,\mu}^{H}(\infty) \nu_{\infty}^{\frac{p}{p^*}} \nonumber \\
             & \geq & \E_{g,\mu}^{H}(\Om)  \left(\int_{\Om} |u|^{p^*} \dx \right)^{\frac{p}{p^*}} + \C_{g,\mu}^{H,*}(\Om) \|\nu \|^{\frac{p}{p^*}} + \C_{g,\mu}^{H}(\infty) \nu_{\infty}^{\frac{p}{p^*}} \nonumber \\
              & \geq & \E_{g,\mu}^{H}(\Om)  \left(\int_{\Om} |u|^{p^*} \dx +  \|\nu \| +  \nu_{\infty} \right)^{\frac{p}{p^*}}=\E_{g,\mu}^{H}(\Om) \label{Req5} 
             \end{eqnarray}
Thus, equality occurs in all the above inequalities. As $g$ is $H$-subcritical at infinity, by the same arguments as in the proof of Theorem \ref{mainthm} we infer that $\nu_{\infty}=0.$ In the view of Corollary \ref{Cor_ConCpct}, observe that the equality in \eqref{Req5} implies that  $\E_{g,\mu}^{H}(\Om) \norm{\nu}^{\frac{p}{p^*}} + \mu \norm{\gamma} = \norm{\Gamma_{\overline{\sum_g} \cup \mathbb{J}}} $. Thus, by Remark \ref{bhul1}-$(i)$, we have $  \E_{g,\mu}^{H}(\Om) \norm{\nu}^{\frac{p}{p^*}} = \norm{\zeta_\mu} $, where $\zeta_\mu$ is as in Proposition \ref{ConCpct:new}. Notice that, the equality in \eqref{Req5} and Corollary \ref{Cor_ConCpct}-$(e)$  implies that  
$$\left[ \displaystyle \left(\int_{\Om} |u|^{p^*} \dx \right)^{\frac{p}{p^*}} + \|\nu \|^{\frac{p}{p^*}} + \nu_{\infty} ^{\frac{p}{p^*}} \right]
    = \displaystyle \left[ \left(\int_{\Om} |u|^{p^*} \dx \right) + \|\nu \|+\nu_{\infty} \right]^{\frac{p}{p^*}}=1.$$  Hence, one among $\|u\|_{p^*},$ $\|\nu \|$, and $\nu_{\infty}$ is $1$, others are  $0$. Since $\nu_{\infty}=0$,   either $\|u\|_{p^*}=1$ and $\|\nu \|=0,$ or $\|u\|_{p^*}=0$ and $\|\nu \|=1$.
Suppose $\|\nu\|=1$ and $\|u\|_{p^*}=0.$
Thus, by Corollary \ref{Cor_ConCpct}-$(i)$ we infer that $\nu $ is concentrated on a finite $H$ orbit. But, by our hypothesis, $Hx$ is infinite for all $x \in \overline{\Om}$ i.e., no points of $\overline{\Om}$ has finite $H$ orbit. This leads to a contradiction.
Therefore, $\nu=0$ and $\|u\|_{p^*}=1$. Now using the arguments as given in the proof of Theorem \ref{mainthm}, we conclude that  $\E_{g,\mu}^{H}(\Om)$ is attained by $u\in\Dp^H$.  Now by the principle of symmetric criticality theory and strong maximum principle, we establish the existence of a positive solution to the problem \eqref{Critical}.

\begin{rmk} \label{symm} \rm
$(i)$ If $\Om$, $g,H$ are as in Theorem \ref{symmetricsol},  then the above proof shows that $\E_{g,\mu}^{H}(\Om)$ is attained at some $u \in \mathcal{D}_p(\Om)^{H}$
irrespective of $\E_{g,\mu}(\Om)$ being attained in $\Dp$. For example, take $g(z)= \frac{1}{|z|^p}$ on $\mathcal{A}:= \displaystyle B_R(0) \setminus B_r(0)$ for $0<r<R<\infty$. Theorem 2.1 of \cite{Ruiz} shows that, for $\mu \in (0,\mu_1(g))$, $\E_{\frac{1}{|z|^p},\mu}(\Om)$ is attained only if $\Om=\R^N$. This infers that $\E_{\frac{1}{|z|^p},\mu}(\mathcal{A})$ is not achieved. However, $g$ is $H$-invariant, in fact, $\mathcal{O}(N)$-invariant, and also $g$ is $H$-subcritical at infinity (as $\mathcal{A}$ is bounded). Therefore, by Theorem \ref{symmetricsol},  $\E_{g,\mu}^{H}(\mathcal{A})$ must be attained.

$(ii)$ Notice that, the solution obtained in Theorem \ref{symmetricsol} is always $H$-invariant. Thus, in particular, if $H=\mathcal{O}(N)$ then the solution (if exists) will be radial.
\end{rmk}


\begin{example}  \label{criticalatpts}
\rm
$(i)$ {\bf{A potential critical in $\Om$}}: Let $\Om=\Om_k \times \Om_{N-k}$ be as in \eqref{domain} and $g(z)=\frac{1}{|y|^p}$; $z=(x,y) \in \Om$. It is known that $g \in \mathcal{H}_p(\Om)$ if $N-k \geq 2$ and $p<N-k$, see \cite[Theorem 2.1]{Tarantello}. We show that $g$ is critical in $\Om$. For a fix $\mu \in (0,\mu_1(g))$ and $\epsilon >0$, there exists $w \in \mbox{C}_c^{\infty}(\Om)$ (by density of $\mbox{C}_c^{\infty}(\Om)$ in $\mathcal{D}_{p}(\Om)$) with $\|w\|_{p^*}=1$ such that 
$$ \int_{\Om} [|\nabla w|^p(z)-\mu \frac{|w|^p(z)}{|y|^p}] \dz < \E_{g,\mu}(\Om) + \epsilon.$$
By taking $(\xi,0) \in \Om_k\times \Om_{N-k}$ and 
$$ w_r(z)= \displaystyle r^{\frac{p-N}{p}} w \left(\frac{x-\xi}{r},\frac{y}{r}\right),$$ on $\Om_r:=\{(x,y) \in \R^k \times \Om_{N-k}: ar \leq |x-\xi| \leq \overline{b}r \}$, where $\overline{b} >a$ is such that $w(x,y)=0$, $ \forall \,\ |x| > \overline{b}$. It is clear that $\|w_r\|_{p^*}=1$ and  $\Om_r \subseteq \Om$ for small $r>0.$ Now, using the change of variable $\frac{x-\xi}{r} =x'$ and $\frac{y}{r}=y'$ we obtain
$$\int_{\Om_r} [|\nabla w_r|^p(z)-\mu \frac{|w_r|^p(z)}{|y|^p}] \dz=\int_{\Om} [|\nabla w|^p(z)-\mu \frac{|w|^p(z)}{|y|^p}] \dz.$$
This gives $\C_{g,\mu}((\xi,0)) \leq \E_{g,\mu}(\Om)$. Consequently, $\C_{g,\mu}^{*}(\Om) \leq \E_{g,\mu}(\Om)$ holds for all $\mu \in (0,\mu_1(g))$, and the other way inequality indeed holds. Therefore, $g$ is critical in $\Om$.

$(ii)$ {\bf{A potential $H$-subcritical at infinity in a unbounded domain}}:  Consider the same example as above where $\Om=\Om_k \times \Om_{N-k}$ as in \eqref{domain} with $0<a, b = \infty$ and $ N-k \geq 2, p < N-k$.
Then it follows from the similar arguments used in Example \ref{criatptsandinf}-$(iii)$ that $g$ is critical at infinity. On the other hand, if $b < \infty$, then $\Om_k$ is bounded, and on top of that if $\Om_{N-k}$ is bounded, then $\Om$ becomes bounded. In that case $g$ is $H$-subcritical at infinity for any subgroup $H$ of $\mathcal{O}(N)$. Next consider that $b<\infty$ and $\Om_{N-k}=\R^{N-k}$. In this case, we show that $g$ is $H_{N-k}$-subcritical at infinity, where $H_{N-k}=  \{Id_{k}\}\times\mathcal{O}(N-k)$.   Fix $\mu \in (0,\mu_1(g))$. For each $R>0$, by definition of $\C_{g,\mu}^{H_{N-k}}(\infty)$, there exists $v_R \in \mathbb{S}_p(\Om \cap B_R^c)^{H_{N-k}}$ such that
$$\int_{\Om} [|\nabla v_R|^p(x,y) - \mu \frac{|v_R|^p(x,y)}{|y|^p}] \dz < \C_{g,\mu}^{H_{N-k}}(\infty) + \frac{1}{R}.$$
Therefore,
\begin{eqnarray} \label{1015}
 \E_{{\bf{0}},\mu}^{H_{N-k}}(\Om) - \mu  \int_{\Om_R}  \frac{|v_R|^p(x,y)}{|y|^p}\dz  < \C_{g,\mu}^{H_{N-k}}(\infty) + \frac{1}{R},
\end{eqnarray}
where $\Om_R=\Om \cap B_R^c$.
Notice that $\mu \in (0,\mu_1(g))$ implies that $(v_R)$ is bounded in $\mathcal{D}_{p}(\Om_R)$ and their supports are decreasing to infinity. Hence, $v_R \wra 0$ in $\mathcal{D}_{p}(\Om)$. Now, since $g \in L^{\frac{N}{p}}(\Om_R)$, it follows that $g \in \mathcal{F}_p(\Om_R)$ (Remark \ref{Subspace_Fp}) and hence
$$\displaystyle \int_{\Om_R}  \frac{|v_R|^p(x,y)}{|y|^p} \dz \ra 0\,, \ 
\mbox{as} \ R \ra \infty.$$ 
By taking $R \ra \infty$ in \eqref{1015}, we obtain $\E_{{\bf{0}},\mu}^{H_{N-k}}(\Om) \leq \C_{g,\mu}^{H_{N-k}}(\infty)$. 
Now, in order to show that $g$ is $H_{N-k}$-subcritical at infinity, we require to show that $  \E_{g,\mu}^{H_{N-k}}(\Om)< \E_{{\bf{0}},\mu}^{H_{N-k}}(\Om)$. To show this, we recall the well known compact embedding proved by Lions \cite[LEMME III.2]{LIONS1982315} that ensures  $\Dp^{H_{N-k}} \hookrightarrow L^{p^*}(\Om)$ is compact. As a consequence $\E_{{\bf{0}},\mu}^{H_{N-k}}(\Om)$ is achieved at some $u \in \mathbb{S}_p(\Om)^{H_{N-k}}$  
and consequently,
$$\E_{{\bf{0}},\mu}^{H_{N-k}}(\Om)= \int_{\Om} |\nabla u|^p \dz > \int_{\Om} [|\nabla u|^p-\mu g |u|^p] \dz \geq \E_{g,\mu}^{H_{N-k}}(\Om)\,. $$

$(iii)$ {\bf{A potential critical at infinity but $H$-subcritical at infinity}}: Let $\Om=\Om_k \times \Om_{N-k}$ as in \eqref{domain} with $0<a, b < \infty$ and $\Om_{N-k}= \R^{N-k}$. Also, assume that $ N-k \geq 2, p < N-k$. Consider $g(z)=\frac{1}{|z|^p}$; $z=(x,y) \in \Om$. One can repeat the arguments of Example \ref{criticalatpts}-$(ii)$ to show that $g$ is $H_{N-k}$-subcritical at infinity.  Now, we show that $g$ is critical at infinity. On the contrary, if $g$ is subcritical at infinity, then it follows from the proof of Theorem \ref{symmetricsol} that $\E_{g,\mu}(\Om)$ is achieved. However, this is possible only if $\Om=\R^N$ \cite[Theorem 2.2]{Ruiz}. Hence, $g$ must be critical at infinity.

$(iv)$ {\bf{A potential $H$-subcritical at infinity}}: Let $\Om=\Om_k \times \R^{N-k}$ be as in \eqref{domain} with $b <\infty$ and $ N-k \geq 2, p < N-k$. For $z=(x,y) \in \Om$, let
$$ g(z)=\frac{1}{|x|^{\al} (1+|y|^2)^{\frac{p-\al}{2}}} \,, \ \al \in (0, \frac{pk}{N}) \,. $$
Using the same arguments as in the previous example, one can show that $g$ is $H_{N-k}$-subcritical at infinity, where $H_{N-k}= \{Id_{k}\}\times \mathcal{O}(N-k)$. 
\end{example}
\begin{rmk}\rm 
Let $\Om=\Om_k \times \Om_{N-k}$ be as in \eqref{domain} with $0<a, b<\infty$ and $g$ be as in $(ii)$, $(iii)$ in Example \ref{criticalatpts}. Then $g$ is $H_{N-k}$ invariant and $H_{N-k}$-subcritical at infinity for certain range of $k$, where $H_{N-k}= \{Id_{k}\}\times \mathcal{O}(N-k)$.
In these cases, Theorem \ref{symmetricsol} can be applied to show that \eqref{Critical}
admits a positive solution. 
\end{rmk}

Next we prove Theorem \ref{atorigin}.

\noi {\bf{Proof of Theorem \ref{atorigin}:}} $(i)$ Let $\Om = \R^N$ and $H$ be a closed subgroup of $\mathcal{O}(N)$ that acts on $\R^N$.
Let $(u_n)$ be a minimising sequence
of $\E_{g,\mu}^H(\R^N)$ on $\mathbb{S}_p(\R^N)^H$ i.e., $ u_n \in \mathcal{D}_{p}(\R^N)^H$ with $\|u_n\|_{p^*}=1$ and
$$\int_{\R^N} [|\nabla u_n|^p - \mu g |u_n|^p] \dx \ra \E_{g,\mu}^H(\R^N), \ \mbox{as} \ n\ra \infty.$$
For each $u_n$, there exists $R_n>0$ such that 
$$\int_{ B_{R_n}} |u_n|^{p^*} \dx \geq \frac{1}{2} \,.$$
We define $w_n(z)= R_n^{\frac{N-p}{p}} u_n(R_nz)$ on $\R^N$. 
Then, $w_n \in \mathcal{D}_{p}(\R^N)^H$ and $\|w_n\|_{p^*}=1$.  
Also, since $g$ satisfies \ref{H3} for small $r>0$, we have
$$ \E_{g,\mu}^H(\R^N) \leq \int_{\R^N} [|\nabla w_n|^p - \mu g |w_n|^p]\dz \leq \int_{\R^N} [|\nabla u_n|^p - \mu g |u_n|^p] \dx \ra \E_{g,\mu}^H(\R^N), \ \ \mbox{as} \ n \ra \infty.$$
$\mu \in (0,\mu_1(g))$ ensures that $(w_n)$ is bounded in
$\mathcal{D}_{p}(\R^N)^H$, and hence $w_n \wra w$ in $\mathcal{D}_{p}(\R^N)^H$. 
Now, we use Corollary \ref{Cor_ConCpct} to obtain,
\begin{eqnarray*}
 \E_{g,\mu}^H(\R^N) &=& \lim_{n \ra \infty} \int_{\R^N} [|\nabla w_n|^p - \mu g |w_n|^p] \dz \nonumber\\
& \geq & \int_{\R^N} [|\nabla w|^p - \mu g |w|^p] \dz + \C_{g,\mu}^{H,*}(\R^N) \|\nu \|^{\frac{p}{p^*}} + \C_{g,\mu}^{H}(\infty) \nu_{\infty}^{\frac{p}{p^*}} \nonumber \\
& \geq & \E_{g,\mu}^H(\R^N)  \left(\int_{\R^N} |w|^{p^*} \dz \right)^{\frac{p}{p^*}} + \C_{g,\mu}^{H,*}(\R^N) \|\nu \|^{\frac{p}{p^*}} + \C_{g,\mu}^{H}(\infty) \nu_{\infty}^{\frac{p}{p^*}}\\
&\geq& \E_{g,\mu}^H(\R^N) \left[\int_{\R^N} |w|^{p^*} \dz + \|\nu\|+ \nu_{\infty} \right]^{\frac{p}{p^*}} \geq \E_{g,\mu}^H(\R^N)
\end{eqnarray*}
Thus, equality occurs in $\left[ \displaystyle \left(\int_{\R^N} |w|^{p^*} \dz \right)^{\frac{p}{p^*}} + \|\nu \|^{\frac{p}{p^*}} + \nu_{\infty}^{\frac{p}{p^*}} \right]
    = \displaystyle \left[ \int_{\R^N} |w|^{p^*} \dz + \|\nu \| + \nu_{\infty} \right]^{\frac{p}{p^*}}.$
Hence, exactly one of $\|w\|_{p^*},$ $\|\nu \|$ or $\nu_{\infty}$ is $1$, others are $0.$ Since
$$\int_{B_1(0)} |w_n(z)|^{p^*} \dz= \int_{B_{R_n}(0)} |u_n(x)|^{p^*} \dx \geq \frac{1}{2}  \,, \ \forall n \in \N \,,$$
it follows that $\nu_{\infty} =0$. Since $g$ is $H$-subcritical in $\R^N$, we have $\E_{g,\mu}^H(\R^N) <\C_{g,\mu}^{H,*}(\R^N)$. Hence, one can use the arguments in the proof of Theorem \ref{mainthm} to conclude $\|\nu\|=0$. Therefore, $\|w\|_{p^*}=1$, and again using the arguments as in Theorem \ref{mainthm}, we infer that \eqref{Critical} admits a positive solution in $\R^N$.

$(ii)$ 
We consider $H=\mathcal{O}(N)$ and $\Om = \R^N$. Let $(u_n)$ be a minimising sequence
of $\E_{g,\mu}^H(\R^N)$ on $\mathbb{S}_p(\R^N)^H$ i.e., $ u \in \mathcal{D}_{p}(\R^N)^H$ with $\|u_n\|_{p^*}=1$ and
$$\int_{\R^N} [|\nabla u_n|^p - \mu g |u_n|^p] \dx \ra \E_{g,\mu}^H(\R^N), \ \mbox{as} \ n\ra \infty.$$
Now, for each $n \in \N$, there exists $R_n>0$ such that 
$$\int_{B_{R_n}(0)} |u_n|^{p^*} \dx = \frac{1}{2}.$$
We define $w_n(z)= R_n^{\frac{N-p}{p}} u_n(R_nz)$ on $\R^N$. Then, $w_n \in \mathcal{D}_{p}(\R^N)^H$ with $\|w_n\|_{p^*}=1$ and
\begin{equation} \label{Req11}
  \int_{B_1(0)} |w_n|^{p^*} \dz = \frac{1}{2}  
\end{equation}
Further, since $g$ satisfies \ref{H3} for all $r>0$, we obtain
$$\E_{g,\mu}^H(\R^N) \leq \int_{\R^N} [|\nabla w_n|^p - \mu g |w_n|^p] \dz \leq \int_{\R^N} [|\nabla u_n|^p - \mu g |u_n|^p] \dx \ra \E_{g,\mu}^H(\R^N), \ \ \mbox{as} \ n \ra \infty.$$
Now, $\mu \in (0,\mu_1(g))$ ensures that $(w_n)$ is bounded in
$\mathcal{D}_{p}(\R^N)^H$, and hence $w_n \wra w$ in $\mathcal{D}_{p}(\R^N)^H$. 
Now, we use Corollary \ref{Cor_ConCpct} to obtain,
\begin{eqnarray}
 \E_{g,\mu}^H(\R^N) &=& \lim_{n \ra \infty} \int_{\R^N} [|\nabla w_n|^p - \mu g |w_n|^p] \dz
 \nonumber \\
& \geq & \E_{g,\mu}^H(\R^N)  \left(\int_{\R^N} |w|^{p^*} \dz \right)^{\frac{p}{p^*}} + \C_{g,\mu}^{H,*}(\R^N) \|\nu \|^{\frac{p}{p^*}} + \C_{g,\mu}^{H}(\infty) \nu_{\infty}^{\frac{p}{p^*}} \nonumber \\
&\geq& \E_{g,\mu}^H(\R^N) \left[\int_{\R^N} |w|^{p^*} \dz + \|\nu\|+ \nu_{\infty} \right]^{\frac{p}{p^*}} \geq \E_{g,\mu}^H(\R^N) \label{Req12}
\end{eqnarray}
Thus, equality occurs in \eqref{Req12}. Therefore, as we have seen in the proof of Theorem \ref{symmetricsol} that $  \E_{g,\mu}^{H}(\Om) \norm{\nu}^{\frac{p}{p^*}} = \norm{\zeta_\mu} $, where $\zeta$ is as in Proposition \ref{ConCpct:new}, and also  $$\left[ \displaystyle \left(\int_{\R^N} |w|^{p^*} \dz \right)^{\frac{p}{p^*}} + \|\nu \|^{\frac{p}{p^*}} + \nu_{\infty}^{\frac{p}{p^*}} \right]
    = \displaystyle \left[ \int_{\R^N} |w|^{p^*} \dz + \|\nu \| + \nu_{\infty} \right]^{\frac{p}{p^*}} =1.$$
Hence, exactly one of $\|w\|_{2^*},$ $\|\nu \|$ or $\nu_{\infty}$ is $1$, others are $0.$ By \eqref{Req11}, $\nu_{\infty} \leq \frac{1}{2}$ and hence $\nu_{\infty}=0$.
Now, if $\|\nu\|=1$, then $\|w\|_{p^*}=0$, and since $  \E_{g,\mu}^{H}(\Om) \norm{\nu}^{\frac{p}{p^*}} = \norm{\zeta_\mu} $, it follows from Corollary \ref{Cor_ConCpct}-$(i)$ that, either $\nu =0$ or it is concentrated on a finite $H$ orbit. Since only $0$ has a finite $H$-orbit, it follows that either $\nu=0$ or $\nu=\delta_0$. Let $\nu=\delta_0$. Choose $\phi \in \text{C}_c^{\infty}(\R^N)$ with $0 \leq \phi \leq 1$, $\phi =1$ on $B_{\frac{1}{2}}(0)$ and $\phi =0$ outside $B_1(0)^c$. Using \eqref{Req11} we obtain,
$$\frac{1}{2} = \lim_{n \ra \infty} \int_{B_1(0)} |w_n|^{p^*} \dz \geq \lim_{n \ra \infty} \int_{\R^N} |w_n|^{p^*} \phi \ \dz = \delta_{0} (\phi) =1,$$
which is a contradiction. Thus, $\nu =0$ and hence, $\|w\|_{p^*}=1$.
Now, following the arguments as in Theorem \ref{mainthm}, we infer that \eqref{Critical} admits a positive solution in $\R^N$.


\section{A necessary condition} \label{necessity}
In this section, we prove a necessary condition for $g$ so that \eqref
{Critical} admits a positive solution in entire $\R^N.$ A similar result has been derived in \cite[Theorem 6.1.3]{Anoopthesis} for the existence of solution to the problem:
$$-\De_p u= g(x) |u|^{p-2}u \ \mbox{in} \ \mathcal{D}_p(\R^N).$$
We adapt their ideas  for proving Theorem \ref{ness}. For this we need the following regularity result of solutions.
\begin{proposition} \label{regularity}
Let $g \in {\rm{C}}^{\al}_{loc}(\R^N)$ with $\al \in (0,1)$ and $u$ be a solution to
$$-\De_p u - g |u|^{p-2}u= |u|^{p^*-2}u \ {\rm{in}} \ \mathcal{D}_p(\R^N).$$
Then $u \in {\rm{C}}^{1,\al}_{loc}(\R^N).$
\end{proposition}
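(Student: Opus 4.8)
The plan is to establish this by a standard bootstrap carried out in three stages: first upgrade $u$ to a locally bounded function, then invoke the interior $\text{C}^{1,\beta}$ theory for the $p$-Laplacian with bounded right-hand side, and finally iterate once more using the H\"older continuity of the data. Since regularity is a purely local matter, the fact that the equation is posed on the unbounded domain $\R^N$ plays no role, and one may work on an arbitrary ball $B_{2R}\cset\R^N$.

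First I would show $u\in L^\infty_{loc}(\R^N)$. This is the only step where the critical exponent genuinely intervenes, because $u\in\mathcal{D}_p(\R^N)\embd L^{p^*}(\R^N)$ places the term $|u|^{p^*-2}u$ exactly at the borderline of the natural integrability. The classical device is a Moser-type iteration in the spirit of Brezis--Kato: for a cutoff $\eta\in\text{C}_c^{\infty}(B_{2R})$ with $\eta\equiv1$ on $B_R$ and $k\geq1$, test the weak formulation against $\eta^p|u_M|^{(k-1)p}u_M$, where $u_M$ is the truncation of $u$ at level $M$ (needed for admissibility). Using the Sobolev inequality on the left and splitting the critical contribution as $\int\eta^p|u|^{p^*-p}\bigl(\eta|u_M|^{k-1}u_M\bigr)^p\dx$, the factor $\bigl(\int_{\{|u|>M\}}|u|^{p^*}\dx\bigr)^{\frac{p^*-p}{p^*}}$ can be made as small as we wish by absolute continuity of the integral, hence absorbed into the left-hand side; the lower-order term $g|u|^{p-2}u$ causes no trouble since $g$, being continuous, is locally bounded. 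Letting $M\ra\infty$ and inducting on $k$ gives $u\in L^q_{loc}(\R^N)$ for every $q<\infty$, and a final De Giorgi--Moser sup-estimate (or one further iteration) yields $u\in L^\infty_{loc}(\R^N)$.

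Next, once $u\in L^\infty_{loc}$, rewrite the equation as $-\Delta_p u=h$ with $h:=g|u|^{p-2}u+|u|^{p^*-2}u\in L^\infty_{loc}(\R^N)$. By the interior regularity theory for quasilinear equations of $p$-Laplacian type (DiBenedetto, Tolksdorf, Lieberman), every local weak solution with bounded right-hand side is of class $\text{C}^{1,\beta}_{loc}$ for some structural $\beta=\beta(N,p)\in(0,1)$; in particular $\nabla u$ is locally bounded and locally H\"older continuous. Then $h$ is itself locally H\"older continuous: $t\mapsto|t|^{p^*-2}t$ is locally Lipschitz when $p^*\geq2$ and locally $(p^*-1)$-H\"older when $p^*<2$, and likewise for $t\mapsto|t|^{p-2}t$, so composing with $u$ and multiplying by $g\in\text{C}^{\alpha}_{loc}$ gives $h\in\text{C}^{\gamma}_{loc}(\R^N)$ for some $\gamma\in(0,1)$. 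Feeding this into the Schauder-type estimate for the $p$-Laplacian with H\"older continuous right-hand side upgrades the interior H\"older exponent of $\nabla u$; iterating finitely many times the exponent stabilizes and one obtains $u\in\text{C}^{1,\alpha}_{loc}(\R^N)$ (in any event $u\in\text{C}^{1}_{loc}(\R^N)$, which is all that Theorem~\ref{ness} requires).

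The main obstacle is the first stage: $p^*$ is precisely the critical exponent for which a careless iteration fails to close, and everything hinges on the absorption step exploiting $u\in L^{p^*}(\R^N)$ together with the absolute continuity of $\int_{\{|u|>M\}}|u|^{p^*}\dx$. Once local boundedness is secured, the remaining upgrades are routine applications of the classical quasilinear De Giorgi--Nash--Moser and Schauder machinery, with no further idea required.
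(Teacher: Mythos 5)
Your proposal is correct and follows essentially the same route as the paper: the paper obtains $u\in L^{\infty}_{loc}$ by the Brezis--Kato/Moser iteration of Egnell's Proposition A.1 (equivalently Peral's Theorem E.0.20), which is exactly your truncation-and-absorption argument exploiting the smallness of $\int_{\{|u|>M\}}|u|^{p^*}\dx$, and then concludes via Tolksdorf's interior $\text{C}^{1,\al}$ theory for quasilinear equations with bounded data. Your additional remark that the Hölder exponent produced by the quasilinear theory is structural (and that $\text{C}^{1}_{loc}$ suffices for the Pohozaev argument) is a fair point of care that the paper glosses over, but it does not constitute a different approach.
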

\begin{proof}
Let $\Om$ be any bounded domain in $\R^N.$ Then, by following the arguments of Proposition A.1 of \cite{Egneel} (we also refer to \cite[Theorem E.0.20]{Peral}) we can show that $u \in L^{\infty}(\Om)$. Subsequently, using Tolksdrof's regularity results \cite{Tolksdorf} for general quasilinear operator, we have $u \in \text{C}^{1,\al}_{loc}(\R^N).$
\end{proof}

\noi {\bf{Proof of Theorem \ref{ness}:}} Using the above regularity (Proposition \ref{regularity}) we first show that any solution $u$ of \eqref{Critical} satisfies a pointwise identity outside the set where $\nabla u$ vanishes. For each $\eta >0$, we set $\Om_{\eta} = \{x \in \R^N : |\nabla u|> \eta \}$. Now since $-\De_p$ is uniformly elliptic on $\Om_{\eta}$ and $u \in \text{C}^{1,\al}_{loc}(\R^N)$, it follows from standard elliptic regularity theory  \cite[Section 8.3, page 482]{evans} that $u \in \text{C}^{2,\al}_{loc}(\Om_{\eta})$. Thus we obtain the following point-wise identity on $\Om_{\eta}$:
\begin{equation} \label{pointwise}
 -\De_p u - \mu g(x)|u|^{p-2}u= |u|^{p^*-2}u \ \ \mbox{a.e in} \ \Om_{\eta}
\end{equation}
Now we choose a cut-off function $\zeta \in \text{C}_c^{\infty}(\R)$ with $0 \leq \zeta \leq 1 $ such that
$\zeta(t) =1$ for $t \in [0,1]$ and $\zeta =0$ for $t \geq 2.$ For each $n \in \N$, we consider
$$\psi_n(x)= \zeta \left(\frac{|x|^2}{n^2} \right).$$
Then there exists $C>0$ independent of $n$ such that
$$|\psi_n(x)|, |x||\nabla \psi_n(x)| \leq C,$$
for all $x \in \R^N$ and $n \in \N.$ Now multiplying \eqref{pointwise} by $\{x.\nabla u\} \psi_n$, 
\begin{equation} \label{pointwise2}
 -\De_p u \{x.\nabla u\} \psi_n - g(x)|u|^{p-2}u \{x.\nabla u\} \psi_n = |u|^{p^*-2}u \{x.\nabla u\} \psi_n \ \ \mbox{a.e in} \ \Om_{\eta}
\end{equation}
For convenience, we denote
\begin{eqnarray*}
L_n &=& |\nabla u|^{p-2} \nabla u \{x. \nabla u\} \psi_n - (x |\nabla u|^p) \psi_n \\
K_n &=& -\left[ g(x) |u|^{p-2}u + |u|^{p^*-2}u\right] \{x.\nabla u\} \psi_n \\
&+& (1-\frac{N}{p}) |\nabla u|^p \psi_n 
+ \{x. \nabla u\} |\nabla u|^{p-2} \nabla u. \nabla \psi_n.
\end{eqnarray*}
Using \eqref{pointwise2} and following the estimates in \cite[Theorem 6.1.3]{Anoopthesis} we can show that $div L_n = K_n \ \mbox{a.e. in} \ \Om_{\eta}$, and furthermore
$$div L_n = K_n \ \mbox{in} \ \R^N $$
in distribution sense.  Since we have $\text{C}^{1,\al}_{loc}(\R^N)$ regularity of $u$ (Proposition \ref{regularity}), by using weak divergence theorem \cite[Lemma A.1]{Cuesta}, we obtain
 $$\int_{B_{\sqrt{2}n}} K_n(x) \ dx =0.$$
 Furthermore, following the steps of \cite[Theorem 6.1.3]{Anoopthesis} we estimate
\begin{eqnarray*}
div \left \{\left[ g(x) \frac{|u|^p}{p} + \frac{|u|^{p^*}}{p^*} \right]x \psi_n(x) \right\} 
 &=& N \left[ g(x) \frac{|u|^p}{p} + \frac{|u|^{p^*}}{p^*}\right]\psi_n(x) + \frac{|u|^p}{p}  [x. \nabla g(x)] \psi_n(x) \\
 &+& \big[ g(x)|u|^{p-2}u + |u|^{p^*-2}u \big]\{x.\nabla u\} \psi_n \\
 &+& \left[ g(x) \frac{|u|^p}{p} + \frac{|u|^{p^*}}{p^*}\right] x. \nabla \psi_n(x)
\end{eqnarray*}
a.e. in $\R^N.$ Therefore, 
\begin{eqnarray*}
K_n &=& N \left[ g(x) \frac{|u|^p}{p} + \frac{|u|^{p^*}}{p^*} \right] \psi_n(x) + \frac{|u|^p}{p}  [x. \nabla g(x)] \psi_n(x) \\
&+& \left[ g(x) \frac{|u|^p}{p} + \frac{|u|^{p^*}}{p^*}\right] x. \nabla \psi_n(x) - div \left \{[ g(x) \frac{|u|^p}{p}  + \frac{|u|^{p^*}}{p^*}]x \psi_n(x) \right\} \\
&+& (1-\frac{N}{p}) |\nabla u|^p \psi_n 
+ \{x. \nabla u\} |\nabla u|^{p-2} \nabla u. \nabla \psi_n 
\end{eqnarray*}
Hence,
\begin{eqnarray*}
 \int_{B_{\sqrt{2}n}} \left[ N \left( g(x) \frac{|u|^p}{p} + \frac{|u|^{p^*}}{p^*}\right) + \frac{|u|^p}{p}  [x. \nabla g(x)] \psi_n(x) +(1-\frac{N}{p}) |\nabla u|^p \right] \psi_n \\
 \int_{B_{\sqrt{2}n}} \left( \{x. \nabla u\} |\nabla u|^{p-2} \nabla u + \left[ g(x) \frac{|u|^p}{p} + \frac{|u|^{p^*}}{p^*}\right] x \right) \nabla \psi_n(x)=0
\end{eqnarray*}
Notice that $\psi_n \ra 1$ and $\nabla \psi_n \ra 0$ as $n \ra \infty.$
Since each of the above integrals are integrable in entire $\R^N$, we use dominated convergence theorem to obtain
$$\int_{\R^N} \left[ N \left[ g(x) \frac{|u|^p}{p} + \frac{|u|^{p^*}}{p^*}\right] + \frac{|u|^p}{p} [x. \nabla g(x)]  +(1-\frac{N}{p}) |\nabla u|^p \right] =0. \label{Req1}$$
As $u$ is a solution,
$$\int_{\R^N}  |\nabla u|^p = \int_{\R^N} [ g |u|^p + |u|^{p^*}] .$$
Substituting this in \eqref{Req1},
$$\int_{\R^N} [x.\nabla g(x) + p g(x)]|u|^p=0 .$$

\begin{rmk} \label{exterior} \rm
Consider the function 
$$g(z)=\frac{1}{(1+|y|^2)^{\frac{p}{2}}},$$
for $z=(x,y) \in \R^N$.
Then one can verify that $z.\nabla g(z) + p g(z) >0$ in $\R^N.$ Therefore, \eqref{Critical} does not have a solution in entire $\R^N$.
However, if we consider the domain $\Om = \Om_k \times \Om_{N-k}$ as in \eqref{domain} with $0<a<b<\infty$, then following the arguments of  Example \ref{criticalatpts}-B, it can be shown that 
$g$ is sub-critical at infinity if $ N-k \geq 2, p < N-k$. Thus, it follows from Theorem \ref{symmetricsol} that \eqref{Critical} admits a positive solution in $\Om$. 
\end{rmk}

\begin{center}
	{\bf Acknowledgements}
\end{center}

The second author acknowledges the support of the Israel Science Foundation (grant 637/19) founded by the Israel Academy of Sciences and Humanities.


		

\end{document}